\documentclass[11pt]{amsart}
\usepackage[T1]{fontenc}
\usepackage[utf8]{inputenc}
\usepackage[english]{babel}
\usepackage{amsmath, amsthm, amssymb, amsfonts, enumerate, mathrsfs, dsfont, comment, bm, mathtools}
\usepackage{algorithm}
\usepackage{algpseudocode}
\usepackage{enumitem}
\usepackage{subcaption}
\usepackage{graphicx}
\usepackage{color, geometry, todonotes, indentfirst, graphicx, pdfpages}
\usepackage{fancyhdr}
\usepackage{cases}
\usepackage{appendix}
\usepackage{float}

\usepackage{etoolbox}
\makeatletter
\def\section{\@startsection{section}{1}%
  \z@{2.5ex plus 1ex minus .2ex}{1.5ex plus .2ex}%
  {\centering\normalfont\Large\scshape}}
\makeatother

\usepackage[colorlinks=true,linkcolor=blue,urlcolor=blue]{hyperref}

\allowdisplaybreaks
\newtheorem{theorem}{Theorem}[section]
\newtheorem{remark}[theorem]{Remark}
\newtheorem{assumption}[theorem]{Assumption}
\newtheorem{lemma}[theorem]{Lemma}
\newtheorem{prop}[theorem]{Proposition}
\newtheorem{corollary}[theorem]{Corollary}
\newtheorem{definition}[theorem]{Definition}

\theoremstyle{plain}

\newcommand{\D}{\mathcal{D}}

\newcommand{\norm}[1]{\left\lVert{#1}\right\rVert}

\DeclareMathAlphabet{\mathbbold}{U}{bbold}{m}{n}

\numberwithin{equation}{section}

\newcommand{\pushright}[1]{\ifmeasuring@#1\else\omit\hfill$\displaystyle#1$\fi\ignorespaces}
\newcommand{\mail}[1]{\href{mailto:#1}{\texttt{#1}}}

\title[]{Optimal Consumption and Portfolio Choice with No-Borrowing Constraint in the Kim-Omberg Model: \\
The Complete Market Case}

\author[G.~Ferrari]{Giorgio~Ferrari\textsuperscript{\MakeLowercase{a},1}}
\author[T. N.~Schütz]{Tim Niclas~Schütz\textsuperscript{\MakeLowercase{a},2}}
\thanks{\noindent\textsuperscript{a} Bielefeld University, Center for Mathematical Economics (IMW), Bielefeld (Germany).}
\thanks{\noindent
\noindent \textsuperscript{1} E-mail: \mail{giorgio.ferrari@uni-bielefeld.de}.
\\
\noindent \textsuperscript{2} E-mail: \mail{tim.schuetz@uni-bielefeld.de}.}

\date{\today}

\numberwithin{equation}{section}

\begin{document}


\begin{abstract}
In this paper, we study an intertemporal utility maximization problem in which an investor chooses consumption and portfolio strategies in the presence of a stochastic factor and a no-borrowing constraint. In the spirit of the Kim-Omberg model, the stochastic factor represents the expected excess return of the risky asset. It is perfectly negatively correlated with shocks to the risky asset, and follows an Ornstein-Uhlenbeck process, thereby capturing the mean reversion of expected excess returns--a feature well supported by empirical evidence in financial markets. The investor seeks to maximize expected utility from consumption, subject to the constraint that wealth remains nonnegative at all times. To address the dynamic no-borrowing constraint, we use Lagrange duality to transform the primal problem into a singular control problem in the dual space. We then characterize the solution to the dual singular control problem via an auxiliary two-dimensional optimal stopping problem featuring stochastic volatility, and subsequently retrieve the primal value function as well as the optimal portfolio and consumption plans. Finally, a numerical study is conducted to derive economic and financial implications.
\end{abstract} 

\maketitle

\smallskip 
{\textbf{Keywords}:}
optimal consumption and portfolio choice, Kim-Omberg model, no-borrowing constraint, singular stochastic control, optimal stopping, stochastic volatility.

\smallskip 
{\textbf{AMS subject classification}}: 91G15, 91G30, 49N15, 90C39, 60G40, 93E20

\section{Introduction}
We study an infinite-horizon optimal consumption and investment problem in which an agent chooses how much to consume and how to allocate wealth in a financial market. The agent receives a constant stream of labor income and faces a no-borrowing constraint, meaning that wealth must remain nonnegative at all times. This rules out borrowing either in financial markets or against future labor income, so that all decisions must be financed by current resources.

Investment opportunities vary over time because the expected excess return of the risky asset is driven by a stochastic factor $(\beta_t)_t$. Empirical evidence (see, e.g., \cite{fama1988dividend} and \cite{poterba1988mean}) suggests that expected excess returns are predictable and mean-reverting. Motivated by this finding, we model $(\beta_t)_t$ as a mean-reverting Ornstein--Uhlenbeck process, following the so-called Kim-Omberg model introduced in \cite{kim1996dynamic}. Furthermore, as in \cite{wachter2002portfolio}, we focus on the case in which the Brownian motion driving $(\beta_t)_t$ is perfectly negatively correlated with the Brownian motion driving the risky asset's returns, a specification that is empirically motivated by the return-predictability literature (see, e.g., \cite{barberis2000investing}, \cite{fama1988dividend} and \cite{poterba1988mean}), where variables proxying time-varying expected excess returns are found to be strongly negatively correlated with contemporaneous stock returns and positively correlated with future returns.

In such a complete market setting, we derive optimal consumption and portfolio policies, as well as regularity results for the value function. As discussed in more detail below, this is achieved by means of a Lagrange duality approach, which allows us to connect the (primal) dynamic optimization problem with a no-borrowing constraint to a singular stochastic control problem. The latter is then analyzed through an auxiliary genuinely two-dimensional optimal stopping problem, in which one of the state variables exhibits stochastic volatility. In particular, we establish continuous differentiability of the optimal stopping problem's value function and characterize the optimal stopping time in terms of an excess-return-dependent free boundary. These properties are then instrumental in obtaining the complete solution to the original optimal consumption and portfolio choice problem.
\smallskip

\textbf{Methodology and Results.} The no-borrowing constraint requires that the agent's wealth $(X_t)_t$ remain nonnegative at all times, almost surely. Such a restriction affects the set of admissible consumption and portfolio choices and, when the agent's optimization problem is addressed via dynamic programming, it complements the Hamilton–Jacobi–Bellman equation with an appropriate boundary condition.

Inspired by \cite{el1998optimization}, \cite{he1993labor}, and the more recent \cite{jeon2025finite}, we instead adopt a duality-based approach to handle the no-borrowing constraint. Instead of enforcing $X_t \ge 0$ dynamically at every point in time, we reformulate it as a single static budget constraint. This transformation is achieved by introducing a non-increasing, c\`adl\`ag process $(D_t)_t$ that acts as an endogenous dynamic Lagrange multiplier designed to ensure $X_t \ge 0$ at all times $t\ge 0$ almost surely. This dual formulation allows us to express the problem in terms of an auxiliary process $(Z_t)_t$ rather than the wealth process itself. Consequently, the dual problem is recast as a two-dimensional singular control problem in the state variables $(Z_t, \beta_t)_t$, where $(Z_t)_t=(Z_t^D)_t$ is controlled through the monotone control $(D_t)_t$. Crucially, because the stochastic factor $(\beta_t)_t$ directly drives the diffusion of the dual state, the system inherently features stochastic volatility. It is worth noticing that directly approaching the dynamic programming principle (Hamilton-Jacobi-Bellman, HJB) equation related to the primal problem is particularly challenging in our setting, because of a non-geometric wealth dynamics due to the presence of labor income. As a matter of fact, in many classical settings, the wealth process is geometric, and the positivity of wealth is automatically guaranteed. In presence of power utility functions, this geometric structure also allows the value function to be scaled with wealth, reducing the HJB equation to a one-dimensional nonlinear ODE (see, e.g., \cite{merton1969lifetime} and \cite{Merton} for earlier works or the recent \cite{guasoni2025variational} and \cite{gutekunst2025optimal}). In our setting, however, the usual scaling arguments break down, the no-borrowing constraint must be enforced explicitly, and a duality approach is needed (see also Remark \ref{geometricremark} below).

A key methodological step in our analysis is relating the dual singular stochastic control problem to an equivalent two-dimensional optimal stopping problem--which inherits state variables featuring stochastic volatility--using probabilistic arguments. Through this approach, our main results provide a complete characterization of the investor's optimal behavior under stochastic investment opportunities, labor income, and a binding no-borrowing constraint. 

First, we establish that the auxiliary two-dimensional optimal stopping problem features a lower-semicontinuous free boundary $\beta \mapsto z^*(\beta)$ that strictly separates the continuation and stopping regions, thereby defining the optimal stopping time. We then prove that the stopping value function $v$ is locally Lipschitz continuous across the entire state space and infinitely differentiable within both the continuation region and the interior of the stopping region. This latter regularity result is obtained by showing that -- despite the degeneracy of the second-order differential operator $\mathcal{L}$ associated with the optimal stopping problem's state process and the stochastic volatility of one state variable -- H\"ormander's condition holds. Consequently, $\mathcal{L}$ is hypoelliptic, and $v$ is a classical solution to the associated partial differential equation within the continuation region. H\"ormander's condition also has the important implication that the optimal stopping problem's state process admits a smooth transition density. This, in turn, guarantees, via an application of a result in \cite{Jacka-Snell}, that $v$ is continuously differentiable across the entire state space.

By connecting the optimal stopping problem back to the dual singular control problem, we show that the dual value function is precisely the integral of the stopping value function with respect to the first component of the state process. This critical relationship allows us to uniquely characterize the optimal singular control $(D^*_t)_t$, which is the minimal process (à la Skorokhod) that keeps the dual state process $(Z_t, \beta_t)_t$ within the region $\{(z,\beta): z < z^*(\beta)\}$. Finally, in our complete market setting, we prove that strong duality holds, demonstrating that the primal value function can be recovered from the dual value function, and vice versa. Using these results, we are then able to retrieve the optimal consumption plan $(c^*_t)_t$, the optimal portfolio strategy $(\pi^*_t)_t$, and the optimal wealth process $(X^*_t)_t$, which, as required, satisfies the dynamic no-borrowing constraint.

Economically, these mathematical results offer a clear interpretation of the system's dynamics. The process $(D^*_t)_t$ represents a shadow price, reflecting the marginal cost of relaxing the wealth constraint. When consumption increases significantly, it exerts pressure on the borrowing constraint as wealth depletes more quickly. To maintain feasibility, $(D^*_t)_t$ adjusts downward, reflecting the reduced capacity to fund future consumption or investments. Concurrently, strong duality will imply that $(Z^{D^*}_t)_t$ represents the marginal value of an additional unit of wealth. As wealth becomes scarce and approaches zero, its marginal value naturally rises, causing $(Z^{D^*}_t)_t$ to increase. Specifically, as the wealth process approaches zero, the marginal value process $(Z^{D^*}_t)_t$ freely evolves upward until it hits the state-dependent free boundary $z^*$. Exactly at the point where $(Z^{D^*}_t)_t$ touches this boundary, the singular control $(D^*_t)_t$ activates and decreases, pushing $(Z^{D^*}_t)_t$ downward. By the established strong duality, this reflection of the dual state corresponds precisely to the wealth process being reflected upward, preventing bankruptcy and ensuring that the no-borrowing constraint is strictly satisfied.
\smallskip

\textbf{Related Literature.} The study of optimal consumption and investment problems with labor income and no-borrowing constraints dates back to 1993, when \cite{he1993labor} developed a duality approach to study an individual's optimal consumption and portfolio policy when borrowing against future labor income is limited. Similar to \cite{he1993labor}, \cite{el1998optimization} addresses an optimal consumption problem for an agent facing stochastic labor income and a strict no-borrowing constraint. They also utilize a duality approach to transform the constrained problem into a solvable, unconstrained dual problem. The resulting optimal strategy is characterized as a singular control problem, where the agent's actions near the wealth boundary ($X_t=0$) are described by a local time component. Crucially, the constrained optimal wealth is shown to be equivalent to the unconstrained wealth minus the value of an American put option, establishing a direct link between the portfolio problem and optimal stopping theory. However, while these earlier works include labor income and borrowing constraints, they do not feature a stochastic factor. Recent studies such as \cite{jeon2025finite} study optimal consumption, investment, and early retirement decisions for an agent under a finite-time horizon and a strict no-borrowing constraint against future labor income. Using the dual-martingale method, the problem is uniquely formulated as a two-person zero-sum game between a singular controller (managing the borrowing constraint) and a discretionary stopper (choosing the retirement time). The solution is governed by a min-max parabolic variational inequality that results in two time-varying free boundaries: one for optimal retirement and one for the active wealth binding constraint. While \cite{jeon2025finite} considers a related singular control structure, it again operates without a stochastic factor.

Optimal consumption and investment problems in the presence of a stochastic factor have also been widely studied. For example, \cite{munk2004optimal} consider an optimal consumption problem involving stochastic interest rates, while \cite{hata2018optimal} study a Merton consumption and portfolio problem with stochastic asset returns and volatilities (see also \cite{hata2025optimal}). Closely related to our work is the Kim-Omberg model \cite{kim1996dynamic}, which studies the dynamic non-myopic portfolio behavior of an investor trading a risk-free and a risky asset, with expected excess returns following a mean-reverting Ornstein-Uhlenbeck process. Extending this framework to include intermediate consumption, \cite{wachter2002portfolio} appears to be the first to derive exact optimal portfolio and consumption rules in this setting. However, unlike our model, \cite{wachter2002portfolio} does not incorporate labor income, works with a finite-time horizon and enforces the non-negativity constraint on wealth only at the terminal time $T$. Furthermore, \cite{guasoni2025variational} derive optimal consumption and investment policies in a complete market featuring a stochastic factor, modeled as a general scalar diffusion that drives investment opportunities. In their framework, the absence of labor income ensures that the wealth dynamics remain purely geometric. This in turn allows the value function to be scaled by wealth, thereby reducing the dimension of the HJB equation. Lastly, \cite{gutekunst2025optimal} consider an optimal consumption and investment model with general stochastic factors. They address optimal investment and consumption for a power utility investor using an incomplete stochastic factor model on an infinite horizon and provide a complete characterization for a finite state space. When the factor follows a diffusion process, they develop a new theoretical framework to prove existence and bound the HJB solution, verifying models like the Heston model rigorously for the first time. Conversely to our approach, \cite{gutekunst2025optimal} allows for general stochastic factors, including the Kim-Omberg setting, but assumes geometric wealth without labor income, which automatically enforces the non-negativity of wealth and crucially permits dimension reduction of the associated HJB equation.

In our analysis, the study of a stationary two-dimensional optimal stopping problem plays a crucial role. For optimal stopping problems involving multi-dimensional processes, the standard guess-and-verify approach is generally no longer applicable. This is because the free boundary separating the continuation and stopping regions is no longer a simple scalar threshold, but rather a complex curve or surface, making it practically infeasible to postulate a parameterized closed-form solution for both the value function and the boundary simultaneously. Consequently, a direct study of the problem's value function and the corresponding variational inequality must be performed on a case-by-case basis via probabilistic methods and/or techniques from the theory of partial differential equations. Notable contributions in this direction, with applications ranging from optimal dividend distribution to public debt reduction and quickest detection, include \cite{bandini}, \cite{callegaro2020optimal}, \cite{christensen}, \cite{de2017optimal}, \cite{ferrari2018optimal}, and \cite{quickest}, among others.

A major mathematical hurdle in our analysis arises from the presence of the stochastic factor, which explicitly introduces stochastic volatility into the optimal stopping problem. Beyond our specific model, optimal stopping under stochastic volatility is known to be highly challenging in general. In standard problems driven by uniformly elliptic diffusions, the value function typically enjoys strong smoothing properties across the entire state space. However, stochastic volatility breaks uniform ellipticity, rendering the associated infinitesimal generator degenerate. This degeneracy makes it particularly challenging to establish the global regularity of the value function and to rigorously characterize the behavior of the free boundary. For example, comparison theorems for solutions to SDEs, which are usually employed in optimal stopping theory to show monotonicity of the optimal stopping boundary, are generally not helpful in settings with stochastic volatility. As a consequence, it is particularly difficult to provide fine smoothness properties of the optimal stopping problem's value function. 

Because of these severe analytical challenges, the overall body of literature addressing optimal stopping under stochastic volatility remains relatively sparse, with a few notable contributions. \cite{jacka} proves, via purely probabilistic techniques, the monotonicity and continuity of the value function for an optimal stopping problem featuring stochastic volatility; \cite{frey} discusses the superreplication of derivatives in a stochastic volatility model under the additional assumption that the volatility follows a bounded process; \cite{Touzi-American} considers a stochastic volatility model for the asset price underlying an American option, extending regularity results for the American put option price function and proving that the optimal exercise boundary is a decreasing function of the current volatility process realization. Finally, \cite{LambertonTerenzi} provides an analytical characterization of the price function of an American option in Heston-type models using an approach based on variational inequalities, while \cite{LambertonTerenzi2} studies important properties of the American option price in the stochastic volatility Heston model, including monotonicity and smoothness of the value function, as well as its early-exercise-premium representation.
\smallskip

\textbf{Structure of the Paper.} The rest of the paper is organized as follows. In Section \ref{section2}, we introduce the primal problem and show how the no-borrowing constraint can be transformed into a static budget constraint. Section \ref{section3} derives the associated dual problem using duality principles and obtains the corresponding singular control formulation. We also establish a probabilistic link between this singular control problem and an auxiliary optimal stopping problem, which is fully analyzed in Sections \ref{section4}, \ref{preliminary} and \ref{freeboundarysection}. Section \ref{section5} uses the dual formulation to recover the optimal consumption and investment strategies in the primal problem. Finally, we provide numerical illustrations in Section \ref{numerics}.


\section{The Primal Problem}
\label{section2}

\subsection{The Financial Market and the Agent's Problem}
Let $(\Omega,\mathcal{F}, \mathbb{P})$ be a complete probability space and denote by $\mathbb{E}[\cdot]$ the expectation under $\mathbb{P}$. Consider an agent whose goal is to maximize the intertemporal expected utility functional given by

\[
\mathbb{E}\left[\int_{0}^{\infty} e^{-\delta t} u(c_t) \, dt \right],
\]
where \( \delta > 0 \) denotes the discount rate, and the utility function \( u \) is of power type; that is,

\[
u(c) = \frac{c^{1-\gamma}}{1-\gamma},
\]
where \( \gamma > 1 \) represents the agent's risk aversion. This value of \( \gamma \) is supported by empirical evidence on individual time preferences. Numerous studies, such as those by \cite{campbell1999force} and \cite{mehra1985equity}, document risk aversion rates well above 1 based on experimental and survey data. This behavior reflects strong present-biased preferences, which aligns with the assumption \( \gamma > 1 \) in our model.\\
\indent The market is described by the so-called Kim-Omberg model (see \cite{kim1996dynamic}), as it follows. The agent can invest in a risk-free asset with a risk-free rate \( r > 0 \) as well as a risky asset \( (S_t)_t \) whose dynamics are given by

\begin{equation}\label{riskyasset}
dS_t = (r + \beta_t) S_t \, dt + \sigma S_t \, dW_t,\quad t>0,\quad S_0=s>0,
\end{equation}
where \( (W_t)_t \) is a standard Brownian motion on $(\Omega,\mathcal{F},\mathbb{P})$ generating the filtration (completed by $\mathbb{P}\text{-null sets of}\; \mathcal{F}$) $\mathbb{F}^W:=(\mathcal{F}_t^W)_t$, and \( \sigma > 0 \) denotes the volatility of the risky asset. The expected return of the risky asset is \( \mu(\beta_t) := r + \beta_t \), where the process \( (\beta_t)_t \) represents the expected excess return of \( (S_t)_t \). Asset pricing studies suggest that expected excess returns are predictable and tend to revert to their long-run mean (see \cite{fama1988dividend} and \cite{poterba1988mean}, among others). To capture this mean-reversion, we define \( (\beta_t)_t \) as

\begin{equation}\label{primalbeta}
d\beta_t = \kappa (\overline{\beta} - \beta_t) \, dt - \sigma_\beta \, dW_t,\quad t>0,\quad \beta_0=\beta\in\mathbb{R},
\end{equation}
where \( \kappa > 0 \) is the speed of mean reversion, \( \overline{\beta} > 0 \) is the equilibrium level, and \( \sigma_\beta > 0 \) is the volatility of \( (\beta_t)_t \). By definition, the expected excess return \( (\beta_t)_t \) reverts to its long-run average \( \overline{\beta} \). As one can see from (\ref{riskyasset}) and (\ref{primalbeta}), $(S_t)_t$ and $(\beta_t)_t$ are perfectly negatively correlated. This complete-market specification is in the spirit of \cite{wachter2002portfolio}, where perfect negative correlation is also assumed in a Kim-Omberg setting, and it plays a fundamental role in the subsequent mathematical analysis. Empirically, this assumption is motivated by the evidence that variables proxying time-varying expected excess returns, such as the dividend-price ratio, are strongly negatively correlated with contemporaneous stock returns and positively correlated with future returns (cf.\ \cite{barberis2000investing}, \cite{fama1988dividend} and \cite{poterba1988mean}).\\
\indent The agent chooses a consumption plan \( (c_t)_t \), with \( c_t \geq 0 \), and an investment strategy for the risky asset, denoted by \( (\pi_t)_t \). Additionally, the agent receives a flow of constant labor income \( \ell > 0 \). The agent’s wealth \( (X_t)_t \) thus follows the dynamics

\begin{equation}\label{wealthdynamics}
dX_t = \left(r X_t + \beta_t \pi_t - c_t + \ell \right) \, dt + \sigma \pi_t \, dW_t,\quad t>0,
\end{equation}
with initial wealth \( X_0 = x>0 \).\\
As usual, we define the market price of risk as

\[
\theta(\beta_t) := \frac{\mu(\beta_t) - r}{\sigma} = \frac{\beta_t}{\sigma},\quad t\geq 0,
\]
and we also introduce the process \( (\mathcal{H}_t)_t \), which acts as a stochastic discount factor, as follows

\[
\mathcal{H}_t := \exp\left(- \int_{0}^{t} \left[ r + \frac{1}{2} \frac{\beta_s^2}{\sigma^2} \right] ds - \int_{0}^{t} \frac{\beta_s}{\sigma} \, dW_s \right),\quad t\geq 0;
\]
equivalently,

\begin{equation}\label{stochdiscount}
d\mathcal{H}_t = - r \mathcal{H}_t \, dt - \frac{\beta_t}{\sigma} \mathcal{H}_t \, dW_t,\quad t>0,\quad \mathcal{H}_0=1.
\end{equation}
\indent We make the following \textbf{standing assumption}.
\begin{assumption} \label{assumption_novikov}
We assume $\kappa\sigma>\sigma_\beta$. 
\end{assumption}
The requirement $\kappa\sigma>\sigma_\beta$ in Assumption \ref{assumption_novikov} implies (by the Novikov condition; see, e.g., Corollary 5.13 on p.\ 199 in \cite{karatzas2014brownian}) that \begin{equation}\label{martingale}M_t:=\exp\bigg(- \int_{0}^{t}  \frac{1}{2} \frac{\beta_s^2}{\sigma^2} ds - \int_{0}^{t} \frac{\beta_s}{\sigma} \, dW_s \bigg)=e^{rt}\mathcal{H}_t\end{equation} is an $\mathbb{F}^W$- martingale under $\mathbb{P}$, with $(\mathcal{H}_t)_t$ as in (\ref{stochdiscount}).\\
\indent The agent faces a no-borrowing constraint; that is, \( X_t \geq 0 \) $\mathbb{P}$-a.s.\ for all \( t \geq 0 \). This implies that the agent cannot borrow against future labor income and motivates the following definition of admissible controls.

\begin{definition}\label{def_admissible}
We call the pair of controls \( (\pi, c) \) admissible if:
\begin{enumerate}
    \item \( (c_t)_t\) and \( (\pi_t)_t \) are $\mathbb{F}^W$-progressively measurable, and are such that $c_t\geq 0$ $\mathbb{P}$-a.s.\ for all $t\geq 0$, \( \int_0^T c_s \, ds < \infty \) and \( \int_0^T \pi_s^2 \, ds < \infty \) $\mathbb{P}$-a.s.\ for all \( T > 0 \).
    \item \( X_t \geq 0 \) for all \( t \geq 0 \) $\mathbb{P}$-a.s.
\end{enumerate}
We denote by \( \mathcal{A}(x) \) the set of admissible controls.
\end{definition}
The agent's optimization problem then reads as

\begin{equation}\label{firstV}
\sup_{(\pi, c) \in \mathcal{A}(x)} \mathbb{E}\left[ \int_{0}^{\infty} e^{-\delta t} u(c_t) \, dt \right].
\end{equation}
\begin{remark}\label{geometricremark}
The recent \cite{gutekunst2025optimal} treats a general stochastic-factor model (including the Kim–Omberg setting) but works in a geometric-wealth framework without labor income. This framework automatically ensures the nonnegativity of wealth and allows for a dimension reduction of the associated HJB equation. By contrast, since we consider total investment and consumption and explicitly include labor income, these simplifications are no longer available. Consequently, we adopt the duality–singular control–optimal stopping approach developed in the following sections.
\end{remark}

\subsection{From a Dynamic to a Static Budget Constraint}
\indent Following \cite{jeon2025finite} (see also \cite{el1998optimization} and \cite{he1993labor} for earlier studies), we now transform the dynamic budget constraint \( X_t \geq 0 \) $\mathbb{P}$-a.s.\ $\forall$ \( t \geq 0 \) into a single, static  budget constraint. To that end, we define
\begin{equation}
\mathcal{D}
:= \Big\{ (D_t)_{t \ge 0} : 
D \text{ is } \mathbb{F}^W\text{-adapted, nonnegative, nonincreasing, càdlàg, and } D_{0^-}=1\;\mathbb{P}\text{-a.s.}
\Big\},
\end{equation}
and we then have the following result.

\begin{prop}\label{prop_staticbudget}
\begin{enumerate}
    \item Let $(c_t)_t$ be a consumption plan such that \( (\pi, c) \in \mathcal{A}(x) \). Then it also satisfies the constraint
    \begin{equation}\label{budgetconstraint}
    \sup_{D \in \mathcal{D}} \mathbb{E} \left[ \int_0^\infty \mathcal{H}_t D_t (c_t - \ell) \, dt \right] \leq x.    
    \end{equation}
    Moreover, we have $\mathbb{E} \left[ \int_0^\infty \mathcal{H}_t |c_t - \ell| \, dt \right] < \infty$.
    \item For any nonnegative $\mathbb{F}^W$-progressively measurable \( (c_t)_t \) with \( \int_0^T c_s \, ds < \infty \) $\mathbb{P}$-a.s.\ for all $T>0$, and such that
    \begin{equation}\label{sup=x}
    \sup_{D \in \mathcal{D}} \mathbb{E} \left[ \int_0^\infty \mathcal{H}_t D_t (c_t - \ell) \, dt \right] = x,
    \end{equation}
    there exists a process \( (\pi_t)_t \) such that \( (\pi, c) \in \mathcal{A}(x) \).
\end{enumerate}
\end{prop}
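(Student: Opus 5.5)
For part (1), apply Itô's product rule to $\mathcal{H}_t D_t X_t$ for a fixed $D\in\mathcal{D}$. Using \eqref{wealthdynamics} and \eqref{stochdiscount}, the drift terms $rX_t$ and $\beta_t\pi_t$ cancel against the drift and quadratic covariation of $\mathcal{H}$. We obtain
\begin{equation*}
\mathcal{H}_t D_t X_t + \int_0^t \mathcal{H}_s D_s (c_s-\ell)\,ds = x + \int_0^t \mathcal{H}_s D_s(\sigma\pi_s - \tfrac{\beta_s}{\sigma}X_s)\,dW_s + \int_{[0,t]} \mathcal{H}_s X_{s^-}\,dD_s .
\end{equation*}
Here $D$ is of finite variation, so no covariation with $D$ arises; the jump at time $0$ is included via $D_{0^-}=1$. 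Since $X\ge 0$ and $dD\le 0$, the last integral is nonpositive.

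Write the left side as $L_t$. Then $L_t$ is bounded above by $x$ plus a local martingale. I would first work with $D\equiv 1$ to establish integrability. The process $\mathcal{H}_tX_t+\int_0^t\mathcal{H}_sc_s\,ds$ is a nonnegative local martingale plus $\int_0^t \mathcal{H}_s\ell\,ds$, hence a supermartingale after this adjustment. Taking expectations along a localizing sequence $\tau_n$ and using $X\ge 0$ and Fatou's lemma gives
\[
\mathbb{E}\Big[\int_0^\infty \mathcal{H}_s c_s\,ds\Big]\le x+\ell\,\mathbb{E}\Big[\int_0^\infty\mathcal{H}_s\,ds\Big]=x+\ell/r,
\]
because $\mathbb{E}[\mathcal{H}_s]=e^{-rs}$ by the martingale property \eqref{martingale} under Assumption \ref{assumption_novikov}. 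This yields $\mathbb{E}[\int_0^\infty\mathcal{H}_t|c_t-\ell|\,dt]<\infty$.

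For general $D$, localize and take expectations to get $\mathbb{E}[\mathcal{H}_{\tau_n}D_{\tau_n}X_{\tau_n}]+\mathbb{E}[\int_0^{\tau_n}\mathcal{H}_sD_s(c_s-\ell)\,ds]\le x$. Drop the first term, which is nonnegative. Then pass $n\to\infty$ by dominated convergence, since $0\le D\le 1$ and the integrand is dominated by $\mathcal{H}|c-\ell|$, which is integrable. Taking the supremum over $D$ gives \eqref{budgetconstraint}.

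For part (2), the natural construction is as follows. Assume the supremum in \eqref{sup=x} is attained or approximated. Define the wealth by the conditional value of future net consumption under the worst multiplier:
\[
X_t := \operatorname*{ess\,sup}_{D\in\mathcal{D}_t}\frac{1}{\mathcal{H}_t}\mathbb{E}\Big[\int_t^\infty \mathcal{H}_s D_s (c_s-\ell)\,ds\,\Big|\,\mathcal{F}^W_t\Big],
\]
where $\mathcal{D}_t$ denotes the multipliers restricted to start from $1$ at $t$. Choosing $D\equiv 1$ on $[t,\infty)$ shows $X_t\ge$ the value with the constant multiplier. Choosing $D$ that jumps to $0$ at $t$ shows $X_t\ge 0$, so the no-borrowing constraint holds automatically, and $X_0=x$ by \eqref{sup=x}.

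Next, show that $Y_t:=\mathcal{H}_tX_t+\int_0^t\mathcal{H}_s(c_s-\ell)\,ds$ is a supermartingale: it is the Snell-envelope-type value of this control/stopping structure, since $D$ nonincreasing is a randomized stopping. Then apply the Doob–Meyer decomposition $Y=x+\int_0^\cdot\psi_s\,dW_s-A$ with $A$ nondecreasing and predictable, and represent the martingale part via the martingale representation theorem in $\mathbb{F}^W$. Finally, set
\[
\pi_t:=\frac{1}{\sigma}\Big(\frac{\psi_t}{\mathcal{H}_t}+\frac{\beta_t}{\sigma}X_t\Big),
\]
and read off that $X$ follows \eqref{wealthdynamics} up to the term $-dA_t/\mathcal{H}_t$.

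The main obstacle is to show that $A\equiv 0$, i.e.\ that $Y$ is in fact a martingale, or else to absorb $A$. My first attempt is to treat $A$ as extra free disposal: consuming $dA/\mathcal{H}$ more is not allowed in the rate form, so instead I would argue via equality in \eqref{sup=x} that $\mathbb{E}[A_\infty]=0$. Indeed, $\mathbb{E}[Y_\infty]=x-\mathbb{E}[A_\infty]$, while the optimal multiplier $D^\ast$ (where $A$ only increases when $X=0$, the Skorokhod-type structure) attains $x$. I expect this to require care: $A$ charges only $\{X=0\}$, which lets one build $D$ decreasing there without loss, forcing $A=0$. Alternatively, if this fails, I would take the wealth of the unconstrained replication plus a reflected "put" term, following the approach of \cite{el1998optimization}.
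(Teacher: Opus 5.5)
Part (1) is correct and is essentially the paper's argument: Itô's formula for $\mathcal{H}XD$, localization, then discarding the nonpositive $dD$-term and the nonnegative terminal term. Doing $D\equiv1$ first and then using dominated convergence instead of Fatou is an immaterial variation.

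Part (2), however, has a genuine gap. Your construction is the paper's in disguise. Since a nonincreasing $D$ is a randomized stopping time, your $Y_t=\mathcal{H}_tX_t+\Gamma_t$ is the Snell envelope of $\Gamma_t:=\int_0^t\mathcal{H}_s(c_s-\ell)\,ds$, and Doob--Meyer plus martingale representation is exactly what the paper uses. The trouble is the step you single out as the main obstacle. You try to show that the increasing part $A$ (the paper's $K$) vanishes, but this is false in general. Your argument cannot give it either: equality in \eqref{sup=x} is information at time $0$ only ($Y_0=x$), and it does not constrain $A$ after $X$ first reaches zero. For instance, let $c_t=C\mathbf{1}_{\{t<1\}}$ with $C>\ell$ chosen so that $(C-\ell)(1-e^{-r})/r=x$. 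Then \eqref{sup=x} holds, and for $t\ge1$ you have $X_t=0$ and $Y_t=\Gamma_t$ strictly decreasing. Hence $A_t=\int_1^{t\vee1}\mathcal{H}_s\ell\,ds\not\equiv0$; this is labour income not consumed at zero wealth.

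The missing observation is that you never need $A\equiv0$. The increasing part is surplus wealth, not forced consumption, so it can only help. Build the wealth from the martingale part alone, $\widehat X_t:=x+\int_0^t\psi_s\,dW_s-\Gamma_t=\mathcal{H}_tX_t+A_t\ge0$, and set $\pi_t:=\frac{1}{\sigma\mathcal{H}_t}\big(\psi_t+\frac{\beta_t}{\sigma}\widehat X_t\big)$. Then $\mathcal{H}^{-1}\widehat X$ is the wealth of $(\pi,c)$ started at $x$, and it is nonnegative. This is precisely the paper's $\widehat X=\bar Y-A+K$.

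Your own $\pi$, built from $\mathcal{H}X$ instead of $\widehat X$, would in fact also work. Indeed, $\Delta:=\mathcal{H}(X^{\pi,c}-X)$ solves $d\Delta=dA-\frac{\beta}{\sigma}\Delta\,dW$ with $\Delta_0=0$, whence $\Delta_t=M_t\int_0^tM_s^{-1}\,dA_s\ge0$. But this comparison is absent from your proposal, and the route you propose in its place fails.

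Also, Doob--Meyer on $[0,\infty)$ needs $Y$ to be of class (D). You should get this from $\mathbb{E}[\int_0^\infty\mathcal{H}_t|c_t-\ell|\,dt]<\infty$, derived from \eqref{sup=x} with $D\equiv1$ as in part (1).
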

\begin{proof}
\indent\emph{Proof of (1).} The proof borrows arguments from \cite{jeon2025finite}.\\
\indent Let $(c_t)_t$ be a consumption plan such that $(\pi,c)\in\mathcal{A}(x)$ and let $D\in\mathcal{D}$. An application of \text{It\^o}'s formula for semimartingales (see Theorem 32 on p.\ 78 in \cite{protter2012stochastic}) yields \begin{equation}\label{HXD}d(\mathcal{H}_tX_tD_{t-})=\mathcal{H}_tD_t(\ell -c_t)dt+\mathcal{H}_tD_t(\sigma\pi_t-\frac{\beta_t}{\sigma}X_t)dW_t+\mathcal{H}_tX_tdD_{t-}.\end{equation} Next, we define the localizing sequence of stopping times \[\tau_n:=\inf\bigg\lbrace s\geq 0:\int_0^s\mathcal{H}^2_tD^2_t\bigg|\sigma\pi_t-\frac{\beta_t}{\sigma}X_t\bigg|^2dt\geq n\bigg\rbrace,\quad n\in\mathbb{N},\] and note that $\tau_n\rightarrow\infty$ a.s., as $n\rightarrow\infty$. Integrating (\ref{HXD}) and rearranging terms we find \begin{align*}\int_0^{\tau_n}\mathcal{H}_sD_s(c_s-\ell )ds-x&=\int_0^{\tau_n}\mathcal{H}_sD_s(\sigma\pi_s-\frac{\beta_s}{\sigma}X_s)dW_s\\
&\quad+\int_0^{\tau_n}\mathcal{H}_sX_sdD_{s-}-\mathcal{H}_{\tau_n}X_{\tau_n}D_{\tau_n-},
\end{align*}
from which, by taking expectations, \begin{align}\label{tau_n}
&\mathbb{E}\bigg[\int_0^{\tau_n}\mathcal{H}_sD_s(c_s-\ell )ds\bigg]-x \notag\\
&\qquad = \mathbb{E}\bigg[\int_0^{\tau_n}\mathcal{H}_sD_s(\sigma\pi_s-\frac{\beta_s}{\sigma}X_s)dW_s+\int_0^{\tau_n}\mathcal{H}_sX_sdD_{s-}-\mathcal{H}_{\tau_n}X_{\tau_n}D_{\tau_n-}\bigg]\\
&\qquad=\underbrace{\mathbb{E}\bigg[\int_0^{\tau_n}\mathcal{H}_sD_s(\sigma\pi_s-\frac{\beta_s}{\sigma}X_s)dW_s\bigg]}_{=0}+\underbrace{\mathbb{E}\bigg[\int_0^{\tau_n}\mathcal{H}_sX_sdD_{s-}\bigg]}_{\leq 0}-\underbrace{\mathbb{E}\bigg[\mathcal{H}_{\tau_n}X_{\tau_n}D_{\tau_n-}\bigg]}_{\geq 0}\notag\\
&\qquad\leq 0\notag.
\end{align} Here we used the facts that \( \mathcal{H}_t \geq 0 \), \( X_t \geq 0 \) since \( (\pi, c) \in \mathcal{A}(x) \), and that \( D \) is nonincreasing and nonnegative. Applying Fatou's lemma for lower bounded functions, together with (\ref{tau_n}), yields
\begin{align*}
\mathbb{E}\bigg[\int_0^{\infty}\mathcal{H}_sD_s(c_s-\ell )ds\bigg]-x&=\mathbb{E}\bigg[\liminf_{n\rightarrow\infty}\int_0^{\tau_n}\mathcal{H}_sD_s(c_s-\ell )ds\bigg]-x\\
&\leq \liminf_{n\rightarrow\infty} \mathbb{E}\bigg[\int_0^{\tau_n}\mathcal{H}_sD_s(c_s-\ell )ds\bigg]-x\leq 0;
\end{align*} hence, \[\mathbb{E}\bigg[\int_0^\infty\mathcal{H}_tD_t(c_t-\ell )dt\bigg]\leq x,\] and by arbitrariness of $D\in\mathcal{D}$, we obtain (\ref{budgetconstraint}); that is, \[\sup_{D\in \mathcal{D}}\mathbb{E}\bigg[\int_0^\infty\mathcal{H}_tD_t(c_t-\ell )dt\bigg]\leq x.\] In particular, since $1\in\mathcal{D}$, we have 
\[
\mathbb{E}\!\left[ \int_0^\infty \mathcal{H}_t (c_t - \ell)\, dt \right]
    \le x.\]
Using the fact that $\mathcal{H}_t = e^{-rt} M_t$, where $(M_t)_t$ is the martingale given by (\ref{martingale}), we also find \[\mathbb{E} \left[ \int_0^\infty \mathcal{H}_t c_t  \, dt \right]\leq x+\mathbb{E} \left[ \int_0^\infty e^{-rt}\ell M_t \, dt \right]=x+\frac{\ell}{r},\] and therefore, \begin{equation}\label{H|c-ell|}\mathbb{E} \left[ \int_0^\infty \mathcal{H}_t |c_t - \ell| \, dt \right]\leq x+\frac{2\ell}{r}<\infty.\end{equation}
\indent\emph{Proof of (2).} We set \begin{equation}\label{A_t}A_t:=\int_0^t\mathcal{H}_s(c_s-\ell)ds,\quad t\geq 0,\end{equation} and have for any $\mathbb{F}^W$-stopping time $\eta$ \[|A_\eta|\leq \int_0^\infty\mathcal{H}_s|c_s-\ell|ds.\] From (\ref{H|c-ell|}), which holds thanks to (\ref{sup=x}) and can be shown as in the proof of (1), it then follows that the process $(A_t)_t$ is continuous and belongs to Class D\footnote{A process $(\zeta_t)_t$ is of Class (D) if the family of random variables $\left\{ \zeta_\tau : \tau<\infty\; \mathbb{P}\text{-a.s.\ is a  stopping time} \right\}$
    is uniformly integrable.}.\\
\indent Now fix an almost surely finite $\mathbb{F}^W$-stopping time $\eta$ and define
\[
D_t^\eta := \mathbf{1}_{\{t<\eta\}},
\qquad t\ge 0,
\]
so that $D^\eta \in \mathcal{D}$, which implies by \eqref{sup=x}
\[
\mathbb{E}\left[\int_0^\eta \mathcal{H}_t (c_t-\ell)\,dt\right]\le x,
\]
or equivalently
\begin{equation}\label{eq:EA-tau}
\mathbb{E}[A_\eta]\le x,
\end{equation}
upon using (\ref{A_t}). We then define the Snell envelope (cf., e.g., page 8 in \cite{peskir2006optimal}) of $(A_t)_t$ by
\[
Y_t := \operatorname*{ess\,sup}_{\eta\geq t}
\mathbb{E}[A_\eta\mid \mathcal{F}^W_t],
\qquad t\ge 0.
\]
Since $(A_t)_t$ is continuous and belongs to Class D, it thus follows that $(Y_t)_{t\ge 0}$ is a continuous supermartingale belonging to Class D, and it is the smallest supermartingale dominating $(A_t)_t$; that is,
\begin{equation}\label{YgeqA}
Y_t \ge A_t,
\qquad\text{for all } t\ge 0.
\end{equation}
Hence, \eqref{eq:EA-tau} implies
\begin{equation}\label{Y_0leqx}
Y_0 = \sup_{\eta\geq0}\mathbb{E}[A_\eta]\le x.
\end{equation}

Next, we define the continuous supermartingale
\[
\bar Y_t := Y_t + (x-Y_0),
\qquad t\ge 0 ,
\]
which, thanks to (\ref{YgeqA}) and (\ref{Y_0leqx}), satisfies
\[
\bar Y_0 = x
\qquad\text{and}\qquad
\bar Y_t \ge A_t
\quad\text{for all } t\ge 0.
\]
By the Doob-Meyer decomposition (cf., e.g., Theorem 3.1 on page 56 in \cite{peskir2006optimal}), there exists a uniformly integrable continuous martingale $(\bar{N}_t)_{t\ge 0}$ and a continuous predictable increasing process $(K_t)_{t\ge 0}$ with $K_0=0$ such that
\begin{equation}\label{barY}
\bar Y_t = x + \bar{N}_t - K_t,
\qquad t\ge 0.
\end{equation}
Moreover, the martingale representation theorem (cf., e.g., Theorem 4.3.4 on page 53 in \cite{oksendal}) yields a progressively measurable process $(\tilde{Z}_t)_{t\ge 0}$ such that
\begin{equation}\label{N}
\bar{N}_t = \int_0^t \tilde{Z}_s\,dW_s,
\qquad t\ge 0,
\end{equation}
and
\[
\int_0^T \tilde{Z}_s^2\,ds<\infty
\qquad \mathbb{P}\text{-a.s. for all } T>0.
\]

We now define
\begin{equation}\label{hatX}
\widehat X_t := \bar Y_t - A_t + K_t,
\qquad t\ge 0.
\end{equation}
Since $\bar Y_t\ge A_t$ and $K_t\ge 0$, we have
\[
\widehat X_t \ge 0,
\qquad t\ge 0,
\] and, upon using $K_0=A_0=0$ and $\bar{Y}_0=x$, also $\widehat X_0=x$.
Plugging (\ref{A_t}) and (\ref{barY}) into (\ref{hatX}) yields
\[
\widehat X_t
= x + \bar{N}_t - K_t - \int_0^t \mathcal{H}_s(c_s-\ell)\,ds + K_t
= x + \int_0^t \mathcal{H}_s(\ell-c_s)\,ds + \int_0^t \tilde{Z}_s\,dW_s,
\] where we have used (\ref{N}) in the final display equation above, as well.
Therefore,
\begin{equation}\label{eq:dhatX}
d\widehat X_t
= \mathcal{H}_t(\ell-c_t)\,dt + \tilde{Z}_t\,dW_t,
\qquad \widehat X_0=x.
\end{equation}

Let us now construct the candidate investment policy $(\pi_t)_t$ as follows
\begin{equation}\label{eq:def-pi-final}
\pi_t
:= \frac{1}{\sigma\mathcal{H}_t}
\left(
\tilde{Z}_t+\frac{\beta_t}{\sigma}\widehat X_t
\right),
\qquad t\ge 0,
\end{equation}
which is clearly $\mathbb{F}^W$-progressively measurable.\\
\indent  In order to show \begin{equation}\label{piadmissible}
\int_0^T \pi_t^2\,dt<\infty
\qquad \mathbb{P}\text{-a.s. for all } T>0,
\end{equation}
we note that the processes $(\mathcal{H}_t)_t$, $(\mathcal{H}^{-1}_t)_t$, $(\beta_t)_t$, and $(\widehat X_t)_t$ are continuous, and hence pathwise bounded. Since we also have $\int_0^T \tilde{Z}_s^2\,ds<\infty$ $\mathbb{P}$-a.s.\ for all $T>0$, we obtain (\ref{piadmissible}) from \eqref{eq:def-pi-final}.

It remains to verify that $(X^{\pi,c}_t)_{t\ge 0}$ is such that $X^{\pi,c}_t\geq 0$ for all $t\geq 0$, where $(X^{\pi,c}_t)_{t\ge 0}$ denotes the wealth controlled through $(\pi,c)$. As the process $(\mathcal{H}_t)_t$ (cf.\ (\ref{stochdiscount})) is such that \[d(\mathcal{H}_t^{-1})=\mathcal{H}_t^{-1}\left(r+\frac{\beta_t^2}{\sigma^2}\right)dt
+\frac{\beta_t}{\sigma}\mathcal{H}_t^{-1}\,dW_t,\] applying It\^o's formula to the process $(\mathcal{H}_t^{-1}\widehat X_t)_t$ yields 
\begin{align*}
d(\mathcal{H}_t^{-1}\widehat X_t)
&= \mathcal{H}_t^{-1}d\widehat X_t
+\widehat X_t\,d(\mathcal{H}_t^{-1})
+d[\widehat X,\mathcal{H}^{-1}]_t \\
&= (\ell-c_t)\,dt + \mathcal{H}_t^{-1}\tilde{Z}_t\,dW_t
+\mathcal{H}_t^{-1}\widehat X_t\left(r+\frac{\beta_t^2}{\sigma^2}\right)dt
+\frac{\beta_t}{\sigma}\mathcal{H}_t^{-1}\widehat X_t\,dW_t
+\frac{\beta_t}{\sigma}\mathcal{H}_t^{-1}\tilde{Z}_t\,dt,
\end{align*}
where \eqref{eq:dhatX} has also been used.
Rearranging terms gives
\[
d(\mathcal{H}_t^{-1}\widehat X_t)
=
\left(
r\mathcal{H}_t^{-1}\widehat X_t+\ell-c_t+\frac{\beta_t}{\sigma}
\left(
\mathcal{H}_t^{-1}\tilde{Z}_t+\frac{\beta_t}{\sigma}\mathcal{H}_t^{-1}\widehat X_t
\right)
\right)dt
+
\left(
\mathcal{H}_t^{-1}\tilde{Z}_t+\frac{\beta_t}{\sigma}\mathcal{H}_t^{-1}\widehat X_t
\right)dW_t,
\]
or equivalently, upon using \eqref{eq:def-pi-final},
\[
d(\mathcal{H}_t^{-1}\widehat X_t)
=
(r\mathcal{H}_t^{-1}\widehat X_t+\beta_t\pi_t-c_t+\ell)\,dt+\sigma\pi_t\,dW_t,
\qquad \mathcal{H}_0^{-1}\widehat X_0=x.
\]
Thus, as $\mathcal{H}_0^{-1}\widehat X_0=x=X^{\pi,c}_0$, comparing the dynamics above with $(\ref{wealthdynamics})$ allows us to conclude
\begin{equation}\label{Xpic}
X_t^{\pi,c}=\mathcal{H}_t^{-1}\widehat X_t,
\qquad t\ge 0,
\end{equation} by the uniqueness of the solution to the SDE. Finally, since $\widehat X_t\ge 0$ and $\mathcal{H}_t>0$ for all $t\ge 0$, we have from (\ref{Xpic})
\[
X_t^{\pi,c}\ge 0
\qquad\text{for all } t\ge 0,
\]
and therefore, $(\pi,c)\in\mathcal{A}(x)$.
\end{proof}
\begin{remark}\label{remark}
Note that since $1 \in \mathcal{D}$, we in particular have from Proposition \ref{prop_staticbudget}-$(1)$
\[
\mathbb{E} \left[ \int_0^\infty \mathcal{H}_t (c_t - \ell)\, dt \right] \leq x.
\]
This means that the present value of discounted consumption, net of labor income, cannot exceed the agent's initial wealth. However, this condition alone does not guarantee that the wealth process remains nonnegative with probability one at all times. Therefore, we need a stronger requirement, which is achieved by using the processes $D$ as in Proposition \ref{prop_staticbudget} above.
\end{remark}
\section{The Dual Problem as a Singular Control Problem}\label{section3}
\subsection{Derivation of the Dual Problem}
In this section, we derive the dual problem expected to be associated to (\ref{V}). Because (cf.\;(\ref{budgetconstraint})) \[
\sup_{D\in \mathcal{D}}\mathbb{E}\bigg[\int_0^\infty\mathcal{H}_tD_t(c_t-\ell )dt\bigg]\leq x,
\] with $(\mathcal{H}_t)_t$ as in (\ref{stochdiscount}), for a given $z>0$ we can write
\begin{align}\label{derivationdualproblem}
\mathbb{E}\bigg[\int_0^\infty e^{-\delta t}u(c_t)dt\bigg]
&\leq \mathbb{E}\bigg[\int_0^\infty e^{-\delta t}u(c_t)dt\bigg] + z\bigg(x - \sup_{D\in \mathcal{D}}\mathbb{E}\bigg[\int_0^\infty \mathcal{H}_tD_t(c_t - \ell)dt\bigg]\bigg) \notag\\
&= \mathbb{E}\bigg[\int_0^\infty e^{-\delta t}u(c_t)dt\bigg] + zx + \inf_{D\in \mathcal{D}}\mathbb{E}\bigg[-\int_0^\infty z\mathcal{H}_tD_t(c_t - \ell)dt\bigg] \\
&= \inf_{D\in \mathcal{D}}\mathbb{E}\bigg[\int_0^\infty e^{-\delta t}\big(u(c_t) - e^{\delta t}z\mathcal{H}_tD_t(c_t - \ell)\big)dt\bigg] + zx.\notag
\end{align}
To simplify notation, in the following we set \begin{equation}\label{Y}Z^{D}_t := e^{\delta t}z\mathcal{H}_tD_t,\quad Z_{0^-} = z>0.\end{equation} Given that the Legendre–Fenchel transform of the utility function \(u\) is such that
\begin{equation}\label{legendre}
\tilde{u}(z) := \sup_{c\geq 0}\left(u(c) - zc\right) = \sup_{c\geq 0}\bigg(\frac{c^{1-\gamma}}{1-\gamma} - zc\bigg)=\frac{\gamma}{1-\gamma}z^{-\frac{1-\gamma}{\gamma}},
\end{equation}
using \(u(c_t) - Z^{D}_t c_t \leq \tilde{u}(Z^{D}_t)\) in (\ref{derivationdualproblem}), we obtain
\begin{equation}\label{Lleq}
\mathbb{E}\bigg[\int_0^\infty e^{-\delta t}u(c_t)dt\bigg] \leq \inf_{D\in \mathcal{D}}\mathbb{E}\bigg[\int_0^\infty e^{-\delta t}\big(\tilde{u}(Z^{D}_t) + \ell Z^{D}_t\big)dt\bigg] + zx,
\end{equation}
which, by arbitrariness of $z>0$, in turn yields
\[
\mathbb{E}\bigg[\int_0^\infty e^{-\delta t}u(c_t)dt\bigg] \leq \inf_{\substack{z>0 \\ D\in\mathcal{D}}}\bigg(\mathbb{E}\bigg[\int_0^\infty e^{-\delta t}\big(\tilde{u}(Z^{D}_t) + \ell Z^{D}_t\big)dt\bigg] + zx\bigg).
\]
Hence, we have the weak duality
\begin{align}\label{weakduality}
\sup_{(\pi,c)\in\mathcal{A}(x)}\mathbb{E}\bigg[\int_0^\infty e^{-\delta t}u(c_t)dt\bigg]\leq \inf_{z>0}\bigg(\inf_{D\in\mathcal{D}}\mathbb{E}\bigg[\int_0^\infty e^{-\delta t}\big(\tilde{u}(Z^{D}_t) + \ell Z^{D}_t\big)dt\bigg] + zx\bigg).
\end{align}
\\
\indent In the subsequent analysis, we shall focus on the problem 
\begin{equation}\label{SCP}
\inf_{D\in\mathcal{D}}\mathbb{E}\bigg[\int_0^\infty e^{-\delta t}\big(\tilde{u}(Z^{D}_t)+\ell Z^{D}_t\big)dt\bigg] ,   
\end{equation} with the aim of proving that actually the strong duality
\begin{equation}\label{strong duality}\sup_{(\pi,c)\in\mathcal{A}(x)}\mathbb{E}\bigg[\int_0^\infty e^{-\delta t}u(c_t)dt\bigg]= \inf_{z>0}\bigg(\inf_{D\in\mathcal{D}}\mathbb{E}\bigg[\int_0^\infty e^{-\delta t}\big(\tilde{u}(Z^{D}_t) + \ell Z^{D}_t\big)dt\bigg] + zx\bigg)\end{equation} holds true. Problem (\ref{SCP}) is a singular stochastic control problem for the two-dimensional state process
\begin{equation}\label{Z^D}
dZ^{D}_t = (\delta - r)Z^{D}_t\,dt - \frac{\beta_t}{\sigma}Z^{D}_t\,dW_t + Z^{D}_t\frac{dD_t}{D_t}, \quad t>0,\quad Z_{0^-}^{D}=z>0,
\end{equation}
\begin{equation}\label{beta}
d\beta_t = \kappa(\overline{\beta} - \beta_t)\,dt - \sigma_\beta\,dW_t,\quad t>0,\quad \beta_0=\beta\in\mathbb{R},
\end{equation} with $D\in\mathcal{D}$. Notice that the dynamics of $(Z_t^D)_t$ is easily obtained from (\ref{Y}) via It\^{o}'s formula.\\
\indent Given the Markovian structure, from now on we stress the dependency of the value of (\ref{SCP}) with respect to the problem's initial data and write \begin{equation}\label{tildeV}\tilde{V}(z,\beta) := \inf_{D \in \mathcal{D}} \mathbb{E}_{z,\beta}\bigg[\int_0^\infty e^{-\delta t}\big(\tilde{u}(Z^{D}_t) + \ell Z^{D}_t\big)\,dt\bigg],\end{equation} where $\mathbb{E}_{z,\beta}[\;\cdot\;]=\mathbb{E}[\;\cdot\;|Z_{0^-}^D=z,\beta_0=\beta]$ denotes the expectation under $\mathbb{P}_{z,\beta}(\cdot):=\mathbb{P}(\;\cdot\;|\;Z^{D}_{0-}=z,\;\beta_0=\beta)$. For later use, we notice that $\lim_{z\downarrow0}\tilde{V}(z,\beta)=0$. Analagously, in the following, we shall write (cf.\ (\ref{firstV}))
\begin{equation}\label{V}V(x, \beta) := \max_{(\pi, c) \in \mathcal{A}(x)} \mathbb{E}_{x,\beta} \left[ \int_{0}^{\infty} e^{-\delta t} u(c_t) \, dt \right],\end{equation}
where $\mathbb{E}_{x,\beta}[\cdot]$ denotes the expectation under $\mathbb{P}_{x,\beta}(\cdot):=\mathbb{P}(\;\cdot\;|\;X_0=x,\;\beta_0=\beta)$.\\
\\
\indent In Proposition \ref{optimalstrategies} below we will show that \[V(x,\beta)=\inf_{z>0}\bigg(\tilde{V}(z,\beta)+zx\bigg),\] so that (\ref{strong duality}) indeed holds true. This will be achieved through a series of intermediate results aimed at characterizing the optimal policy of problem (\ref{SCP}). A major ingredient towards this characterization is the identification of an optimal stopping problem whose value coincides with $\tilde{V}_z$. Such an optimal stopping problem is introduced and studied in the following sections.
\begin{remark}
We emphasize that while the complete-market assumption is needed in order to establish strong duality and recover the solution to the primal problem from the dual formulation, the singular stochastic control problem \eqref{SCP} and the associated auxiliary optimal stopping problem introduced and studied below can be formulated and analyzed as in the subsequent sections more generally when the factor process $(\beta_t)_t$ is driven by a Brownian motion $(W_t^\beta)_t$ satisfying
\[
W_t^\beta=\rho W_t+\sqrt{1-\rho^2}\,W_t^\perp,\qquad t\ge 0,
\]
where $(W^\perp_t)_t$ is a standard Brownian motion independent of $(W_t)_t$ and $\rho\in[-1,1]$.
\end{remark}
\subsection{Derivation of the Auxiliary Optimal Stopping Problem}\label{section4}
Denote by \((Z^{1}_t)_t\) the uncontrolled state process (i.e., $Z^D$ as in (\ref{Z^D}) with $D\equiv 1$) satisfying
\begin{equation}\label{Z^1}
dZ^{1}_t = (\delta - r)Z^{1}_t\,dt - \frac{\beta_t}{\sigma}Z^{1}_t\,dW_t,\quad t>0,\quad Z_0^{1}=z>0.
\end{equation} Furthermore, to simplify notation, set $\mathcal{O}:=(0,\infty)\times\mathbb{R}$.\\
\\
\indent Inspired by \cite{baldursson1996irreversible}, \cite{callegaro2020optimal} and \cite{ferrari2018optimal} we introduce the optimal stopping problem 
\begin{equation}\label{OSTunderP}
v(z,\beta)
:= \inf_{\tau} \mathbb{E}_{z,\beta}\bigg[\int_0^\tau e^{-rt} M_t\,(\tilde{u}'(Z_t^{1}) + \ell)\,dt\bigg],
\end{equation} where $\tilde{u}'(z)=-z^{1/\gamma}$, and where we take the infimum over $\mathbb{F}^W$-stopping times $\tau\geq 0$, $Z^1$ evolves as in (\ref{Z^1}), and $\beta$ as in (\ref{beta}). We expect $v$ to be such that $\tilde{V}_z=v$ on $\mathcal{O}$. Theorem \ref{establish} below will indeed prove that such a relation holds true and that an optimizer for $v$ is in one-to-one correspondence to an optimizer for $\tilde{V}$. In the following, we shall study (\ref{OSTunderP}) and characterize its optimal policy. In order to achieve this, it is convenient to perform a change of measure to remove the martingale $(M_t)_t$ (cf.\ (\ref{martingale})) from the stopping functional. This leads to the next proposition whose proof is postponed to the appendix.
\begin{prop}\label{measurechange}
For $v$ as in (\ref{OSTunderP}) we have 
\begin{equation}\label{OSTunderQ}v(z,\beta)=\inf_{\tau} \mathbb{E}_{z,\beta}^{\mathbb{Q}}\bigg[\int_0^\tau e^{-rt}\,(\tilde{u}'(\hat{Z}_t) + \ell)\,dt\bigg],\end{equation} for a suitable probability measure $\mathbb{Q}$ on a suitable measurable space $(\hat{\Omega},\hat{\mathcal{F}})$. The dynamics of the state processes \((\hat{Z}_t)_t\) and \((\hat{\beta}_t)_t\) are given under $\mathbb{Q}$ by
\begin{align*}
d\hat{Z}_t &= -\frac{\hat{\beta}_t}{\sigma}\hat{Z}_t\,dW^\mathbb{Q}_t + \hat{Z}_t\bigg(\delta - r + \frac{\hat{\beta}_t^2}{\sigma^2}\bigg)\,dt, \quad t>0,\quad\hat{Z}_0=z>0, \\
d\hat{\beta}_t &= -\sigma_\beta\,dW^{\mathbb{Q}}_t + \bigg(\kappa(\overline{\beta} - \hat{\beta}_t) +\,\frac{\hat{\beta}_t}{\sigma}\sigma_\beta\bigg)\,dt,\quad t>0,\quad\hat{\beta}_0=\beta\in\mathbb{R}.
\end{align*} Here, $(W^\mathbb{Q}_t)_t$ is a standard Brownian motion on $(\hat{\Omega},\hat{\mathcal{F}},\mathbb{Q})$, generating the filtration (completed by $\mathbb{Q}\text{-null sets of}\; \hat{\mathcal{F}}$) $\mathbb{F}^{W,\mathbb{Q}}:=(\mathcal{F}_t^{W,\mathbb{Q}})_t$, and $\mathbb{E}_{z,\beta}^\mathbb{Q}[\;\cdot\;]$ is the expectation under $\mathbb{Q}_{z,\beta}=\mathbb{Q}(\;\cdot\mid \hat{Z}_0=z,\;\hat{\beta}_0=\beta)$. Finally, the optimization in (\ref{OSTunderQ}) is performed over $\mathbb{F}^{W,\mathbb{Q}}$-stopping times. 
\end{prop}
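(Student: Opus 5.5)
The plan is a Girsanov change of measure: absorb $M_t$ into a new probability measure, at least on each finite horizon. Since $(M_t)_t$ is a true $\mathbb{F}^W$-martingale by Assumption \ref{assumption_novikov}, for every $T>0$ the relation $d\mathbb{Q}^T:=M_T\,d\mathbb{P}$ on $\mathcal{F}^W_T$ defines a consistent family of probability measures. Girsanov's theorem then shows that $W^{\mathbb{Q}}_t:=W_t+\int_0^t \frac{\beta_s}{\sigma}ds$ is a $\mathbb{Q}^T$-Brownian motion on $[0,T]$.

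Substituting $dW_t=dW^{\mathbb{Q}}_t-\frac{\beta_t}{\sigma}dt$ into (\ref{Z^1}) and (\ref{beta}) gives exactly the dynamics claimed in the statement. For $Z^1$ the drift becomes $\delta-r+\beta_t^2/\sigma^2$. For $\beta$ the drift picks up $+\sigma_\beta\beta_t/\sigma$. Under $\mathbb{Q}$ the pair $(Z^1,\beta)$ therefore solves the system stated for $(\hat Z,\hat\beta)$.

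The drift of $\hat\beta$ is linear with coefficient $-(\kappa-\sigma_\beta/\sigma)<0$, again by Assumption \ref{assumption_novikov}, so this SDE has a unique strong solution. Hence the law of $(\hat Z,\hat\beta)$ under $\mathbb{Q}$ is that of the stated system. Because the filtration does not extend to $T=\infty$ (the limit measure may be singular to $\mathbb{P}$), I would take the "suitable" space $(\hat\Omega,\hat{\mathcal{F}},\mathbb{Q})$ to be a space carrying a Brownian motion $W^{\mathbb{Q}}$. On it I define $(\hat Z,\hat\beta)$ as the strong solution. Equivalently, one could use Kolmogorov extension on canonical path space.

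For a bounded stopping time $\tau\le T$ I would write
\[
\mathbb{E}_{z,\beta}\Big[\int_0^\tau e^{-rt}M_t f(Z^1_t)dt\Big]=\mathbb{E}_{z,\beta}\Big[\int_0^T \mathbf 1_{\{t<\tau\}} e^{-rt}M_t f(Z^1_t)dt\Big],
\qquad f:=\tilde u'+\ell .
\]
Conditioning on $\mathcal{F}^W_t$ turns $M_t$ into $M_T$, so this equals $\mathbb{E}^{\mathbb{Q}^T}[\int_0^\tau e^{-rt}f(Z^1_t)dt]$. The interchange is justified by Fubini and the bound $|f(z)|\le z^{1/\gamma}+\ell$, together with integrability of $M_t (Z^1_t)^{1/\gamma}$ on $[0,T]$, which follows from moment bounds for the OU process. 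Since $Z^1,\beta$ are strong solutions, i.e. measurable functionals of $W$, and $W^{\mathbb{Q}}$ is a measurable functional of $W$ and vice versa, stopping times of $\mathbb{F}^W$ correspond to those of $\mathbb{F}^{W,\mathbb{Q}}$ on $[0,T]$. Hence the functionals in (\ref{OSTunderP}) and (\ref{OSTunderQ}) coincide on stopping times bounded by $T$, for every $T$.

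The main obstacle is passing from bounded to unbounded stopping times, including $\tau=\infty$, in both problems. The key tool is a uniform integrability estimate,
\[
\mathbb{E}^{\mathbb{Q}}\Big[\int_0^\infty e^{-rt}\big(\hat Z_t^{1/\gamma}+\ell\big)dt\Big]<\infty ,
\]
and its $\mathbb{P}$-counterpart $\mathbb{E}[\int_0^\infty e^{-rt}M_t (Z^1_t)^{1/\gamma}dt]<\infty$. I would obtain these from the explicit exponential representation of $Z^1$. Under $\mathbb{Q}$, $\hat\beta$ is a mean-reverting Gaussian process with stationary bounded variance, so $\mathbb{E}^{\mathbb{Q}}[\hat Z_t^{1/\gamma}]$ involves exponential-quadratic moments of $\hat\beta$. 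If needed, the finiteness would be taken as a standing condition on the parameters, presumably one already implicit in the finiteness of $\tilde V$.

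Given this bound, for any stopping time $\tau$ I apply dominated convergence to $\tau\wedge T$ as $T\to\infty$ on both sides. Then the value over all stopping times equals the limit of the values over stopping times bounded by $T$. These values agree by the previous step, so the two infima coincide and (\ref{OSTunderQ}) follows.
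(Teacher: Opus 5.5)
Your route is the paper's: a finite-horizon Girsanov change with density $M_T$, the resulting $\mathbb{Q}$-dynamics, an auxiliary space carrying the strong solutions $(\hat Z,\hat\beta)$, identification of the horizon-$T$ problems through equality in law, and dominated convergence as $T\to\infty$. Your explicit treatment of the correspondence between $\mathbb{F}^W$-stopping times and stopping times of the filtration of $W^{\mathbb{Q}}$ on $[0,T]$ is, if anything, more careful than the paper's.

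The step that does not go through as written is the integrability estimate on which both your Fubini step and the limit $T\to\infty$ rest. You bound $|\tilde u'(z)+\ell|\le z^{1/\gamma}+\ell$, taking literally the formula $\tilde u'(z)=-z^{1/\gamma}$ printed after (\ref{OSTunderP}). Differentiating (\ref{legendre}) instead gives $\tilde u'(z)=-z^{-1/\gamma}$, which is what the paper uses everywhere else. For the integrand you wrote, the estimate is genuinely in doubt. $\hat Z_t^{1/\gamma}$ is a stochastic exponential times $\exp\big(\frac{\gamma+1}{2\gamma^2\sigma^2}\int_0^t\hat\beta_s^2\,ds\big)$, a positive exponential-quadratic functional of a Gaussian process. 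Its expectation grows exponentially and can become infinite at finite $t$; this already happens for the parameter values of Section \ref{numerics}. So ``taking finiteness as a standing condition'' would leave the proof resting on an extra hypothesis that you have not verified and that can fail.

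With the correct integrand no extra condition is needed. The estimate is exactly the computation in Proposition \ref{wellposedness}, which the paper invokes to justify dominated convergence. Write
\[
\hat Z_t^{-1/\gamma}=z^{-1/\gamma}e^{-\frac{1}{\gamma}(\delta-r)t}\,N_t\,\exp\Big(\frac{1-\gamma}{2\gamma^2\sigma^2}\int_0^t\hat\beta_s^2\,ds\Big),\qquad N_t:=\exp\Big(\frac{1}{\gamma\sigma}\int_0^t\hat\beta_s\,dW^{\mathbb{Q}}_s-\frac{1}{2\gamma^2\sigma^2}\int_0^t\hat\beta_s^2\,ds\Big).
\]
The last factor is at most $1$ because $\gamma>1$, and $\mathbb{E}^{\mathbb{Q}}[N_t]\le 1$ since a positive local martingale is a supermartingale. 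Hence $\mathbb{E}^{\mathbb{Q}}_{z,\beta}[\hat Z_t^{-1/\gamma}]\le z^{-1/\gamma}e^{-\frac{1}{\gamma}(\delta-r)t}$. This is integrable against $e^{-rt}$ because $r+\frac{1}{\gamma}(\delta-r)=(1-\frac{1}{\gamma})r+\frac{\delta}{\gamma}>0$.

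The $\mathbb{P}$-side bound follows because $\mathbb{E}[M_t(Z^1_t)^{-1/\gamma}]=\mathbb{E}^{\mathbb{Q}}[\hat Z_t^{-1/\gamma}]$ for each $t$, by your horizon-$t$ identity and equality in law. With this estimate in place, your argument is complete and coincides with the paper's proof.
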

\begin{proof}
See Appendix \ref{proofmeasurechange}.
\end{proof}
\noindent
With reference to Proposition \ref{measurechange}, we therefore now turn our attention to characterizing the solution to the optimal stopping problem \begin{equation}\label{OST_new}v(z,\beta):=\inf_\tau\mathbb{E}^\mathbb{Q}_{z,\beta}\bigg[\int_0^\tau e^{-rt}(\tilde{u}'(\hat{Z}_t)+\ell )dt\bigg],\end{equation} subject to
\begin{align}
d\hat{Z}_t&=-\frac{\hat{\beta}_t}{\sigma}\hat{Z}_tdW^{\mathbb{Q}}_t+\hat{Z}_t(\delta-r+\frac{\hat{\beta}_t^2}{\sigma^2})dt,\quad t>0,\quad \hat{Z}_0=z>0,\label{hatZ}\\
d\hat{\beta}_t&=-\sigma_\beta dW_t^{\mathbb{Q}}+(\kappa(\overline{\beta}-\hat{\beta}_t)+\frac{\hat{\beta}_t}{\sigma}\sigma_\beta)dt,\quad t>0,\quad\hat{\beta}_0=\beta\in\mathbb{R}.\label{hatbeta}
\end{align} 
In the following, when needed, we stress the dependence of the unique strong solution to (\ref{hatZ})-(\ref{hatbeta}) on the initial data $(z,\beta)\in\mathcal{O}$ by writing $(\hat{Z}_t^{z,\beta})_t$ and $(\hat{\beta}_t^\beta)_t$.
\subsection{Preliminary Properties of the Optimal Stopping Value Function}\label{preliminary}
In this subsection, we establish preliminary properties of the value function (\ref{OST_new}). For the proof of those, we make the following assumption on the model's parameters. Such requirement in particular ensures well-posedness of $v$ as in (\ref{OST_new}) and, together with Assumption \ref{assumption_novikov}, it will be a \textbf{standing assumption} throughout the rest of the paper.

\begin{assumption}\label{Novikov_OST}
We assume that 
\[
\gamma>\max\bigg\lbrace 1,\frac{\sigma_\beta}{\sigma\bigg(\kappa-\frac{ \sigma_\beta}{\sigma}\bigg)}\bigg\rbrace.
\] 
\end{assumption}
Notice that $\kappa-\frac{\sigma_\beta}{\sigma}>0$ due to Assumption \ref{assumption_novikov}. We then have the following first preliminary finding.
\begin{prop}\label{wellposedness}
It holds
\[
\mathbb{E}^\mathbb{Q}_{z,\beta}\bigg[\int_0^\infty e^{-rt}|\tilde{u}'(\hat{Z}_t)+\ell|\,dt\bigg]<\infty.
\] Moreover, one has \begin{equation}\label{E(Z)ineq}\mathbb{E}^\mathbb{Q}_{z,\beta}[\hat{Z}_t^{-\frac{1}{\gamma}}]
\leq z^{-\frac{1}{\gamma}}\exp\bigg(-\tfrac{1}{\gamma}(\delta-r)t\bigg).\end{equation}
\end{prop}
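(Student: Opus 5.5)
The plan is to bound $\hat{Z}_t^{-1/\gamma}$ pathwise by a nonnegative stochastic exponential, and then integrate in time using Tonelli's theorem.

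First I fix the form of the running cost. From \eqref{legendre}, differentiating $\tilde{u}(z)=\frac{\gamma}{1-\gamma}z^{\frac{\gamma-1}{\gamma}}$ gives $\tilde{u}'(z)=-z^{-1/\gamma}$. This is the form used below, since it is what makes \eqref{E(Z)ineq} the relevant estimate. Hence
\[
|\tilde{u}'(\hat Z_t)+\ell|\le \hat Z_t^{-1/\gamma}+\ell .
\]
By Tonelli's theorem it then suffices to show that $\int_0^\infty e^{-rt}\,\mathbb{E}^{\mathbb{Q}}_{z,\beta}[\hat Z_t^{-1/\gamma}]\,dt<\infty$. The term involving $\ell$ contributes only $\ell/r$.

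For the moment bound I solve \eqref{hatZ} explicitly. It\^o's formula for $\log\hat Z$ gives
\[
\log \hat Z_t=\log z+\int_0^t\Big(\delta-r+\tfrac{\hat\beta_s^2}{2\sigma^2}\Big)ds-\int_0^t\tfrac{\hat\beta_s}{\sigma}\,dW^{\mathbb{Q}}_s .
\]
Therefore
\[
\hat Z_t^{-1/\gamma}=z^{-1/\gamma}e^{-\frac{1}{\gamma}(\delta-r)t}\exp\Big(\tfrac1\gamma\int_0^t\tfrac{\hat\beta_s}{\sigma}dW^{\mathbb{Q}}_s-\tfrac{1}{2\gamma\sigma^2}\int_0^t\hat\beta_s^2ds\Big).
\]
I then split the last exponent into two pieces:
\[
\tfrac1\gamma\int_0^t\tfrac{\hat\beta_s}{\sigma}dW^{\mathbb{Q}}_s-\tfrac{1}{2\gamma^2\sigma^2}\int_0^t\hat\beta_s^2ds
\qquad\text{and}\qquad
-\tfrac{1}{2\gamma\sigma^2}\big(1-\tfrac1\gamma\big)\int_0^t\hat\beta_s^2ds .
\]
The first piece is the exponent of the stochastic exponential $\mathcal{E}_t:=\mathcal{E}\big(\tfrac1\gamma\int_0^\cdot \tfrac{\hat\beta_s}{\sigma}dW^{\mathbb{Q}}_s\big)_t$. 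The second piece is nonpositive because $\gamma>1$. This yields the pathwise bound
\[
\hat Z_t^{-1/\gamma}\le z^{-1/\gamma}e^{-\frac1\gamma(\delta-r)t}\,\mathcal{E}_t .
\]

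The process $\mathcal{E}$ is a positive continuous local martingale, hence a supermartingale, so $\mathbb{E}^{\mathbb{Q}}_{z,\beta}[\mathcal{E}_t]\le 1$. This gives \eqref{E(Z)ineq} directly. The only delicate point is whether a true martingale property is needed. The supermartingale inequality suffices, so no Novikov-type argument is required here. If one wanted equality, Assumption \ref{Novikov_OST} together with the Gaussian, OU-type structure of $\hat\beta$ under $\mathbb{Q}$ (its mean-reversion speed is $\kappa-\sigma_\beta/\sigma>0$) would be the tool, via a Bene\v{s}-type or local Novikov argument on small time intervals.

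Finally, inserting \eqref{E(Z)ineq} into the integral gives
\[
\int_0^\infty e^{-rt}\,\mathbb{E}^{\mathbb{Q}}_{z,\beta}[\hat Z_t^{-1/\gamma}]\,dt\le z^{-1/\gamma}\int_0^\infty e^{-\left(r+\frac{\delta-r}{\gamma}\right)t}dt .
\]
The exponent satisfies $r+\frac{\delta-r}{\gamma}=\frac{(\gamma-1)r+\delta}{\gamma}>0$, since $\gamma>1$ and $r,\delta>0$. Hence the integral is finite, and adding $\ell/r$ completes the bound. I expect no real obstacle beyond correctly identifying the sign in $\tilde u'$ and the splitting of the exponent that produces a supermartingale factor.
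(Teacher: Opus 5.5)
Your proof is correct. Up to the last step it matches the paper's: the same explicit solution of \eqref{hatZ}, and the same splitting of the exponent into the stochastic exponential of $\frac{1}{\sigma\gamma}\int_0^\cdot\hat\beta_s\,dW^{\mathbb{Q}}_s$ times the factor $\exp\big(\frac{1-\gamma}{2\sigma^2\gamma^2}\int_0^t\hat\beta_s^2\,ds\big)\le 1$. You differ in how the stochastic exponential is handled. The paper invokes Assumption \ref{Novikov_OST}, a Novikov-type condition, to make it a true $\mathbb{Q}$-martingale and changes measure to $\mathbb{Q}'$. This gives the identity $\mathbb{E}^{\mathbb{Q}}_{z,\beta}[\hat Z_t^{-1/\gamma}]=z^{-1/\gamma}e^{-\frac{1}{\gamma}(\delta-r)t}\,\mathbb{E}^{\mathbb{Q}'}_{1,\beta}\big[\exp\big(\frac{1-\gamma}{2\sigma^2\gamma^2}\int_0^t\hat\beta_s^2\,ds\big)\big]$, and only then is the last expectation bounded by $1$. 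You instead drop that factor pathwise and use only that a positive local martingale is a supermartingale. Your route is more elementary. It also shows that \eqref{E(Z)ineq} needs only $\gamma>1$, not the second constraint in Assumption \ref{Novikov_OST}, which the paper uses here only to secure the martingale property. The same splitting gives $\mathbb{E}^{\mathbb{Q}}_{z,\beta}[\hat Z_t^{-p/\gamma}]\le z^{-p/\gamma}e^{-\frac{p}{\gamma}(\delta-r)t}$ for every $p\le\gamma$. That would cover the higher-moment bounds used later in Propositions \ref{uppersemi} and \ref{ProbRepr} without the extra parameter restriction. What the paper's route gives in exchange is an exact representation under an equivalent measure, which could support sharper estimates; the statement, however, needs only the inequality.
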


\begin{proof}
See Appendix \ref{proofwellposedness}.
\end{proof}

The next result directly follows from the expression of $v$ as in (\ref{OST_new}) and the fact that $z\mapsto  \hat{Z}^{z,\beta}_t$ is $\mathbb{Q}$-a.s.\ increasing for all $t\geq 0$.

\begin{prop}\label{monotonicity}
One has that $z\mapsto v(z,\beta)$ is nondecreasing for all $\beta\in\mathbb{R}$.   
\end{prop}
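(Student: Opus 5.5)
The plan is a pathwise comparison. For each fixed stopping time $\tau$, the integrand in \eqref{OST_new} is monotone in the initial point $z$, so the functional is monotone in $z$; taking the infimum over $\tau$ preserves this. The one preliminary point is the sign of $\tilde u'$.

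\textbf{Step 1: the sign of $\tilde u'$.} Differentiating \eqref{legendre} gives
\[
\tilde u(z)=\frac{\gamma}{1-\gamma}\,z^{\frac{\gamma-1}{\gamma}},\qquad
\tilde u'(z)=\frac{\gamma}{1-\gamma}\cdot\frac{\gamma-1}{\gamma}\,z^{-\frac1\gamma}=-z^{-\frac1\gamma}.
\]
This form is also the one used in Proposition \ref{wellposedness}, through the moment $\mathbb{E}^{\mathbb{Q}}_{z,\beta}[\hat Z_t^{-1/\gamma}]$. The stated formula $-z^{1/\gamma}$ after \eqref{OSTunderP} is therefore a typo for $-z^{-1/\gamma}$. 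Hence $z\mapsto \tilde u'(z)+\ell$ is strictly increasing on $(0,\infty)$. I take this computation first, since the direction of the statement depends on it.

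\textbf{Step 2: monotonicity of the flow.} Equation \eqref{hatZ} is linear in $\hat Z$, and \eqref{hatbeta} does not involve $\hat Z$ or $z$. So for fixed $\beta$,
\[
\hat Z^{z,\beta}_t=z\exp\Big(\int_0^t\big(\delta-r+\tfrac{\hat\beta_s^2}{2\sigma^2}\big)ds-\int_0^t\tfrac{\hat\beta_s}{\sigma}\,dW^{\mathbb Q}_s\Big),
\]
with the same process $(\hat\beta^\beta_t)_t$ for every $z$. It follows that $\hat Z^{z_1,\beta}_t\le \hat Z^{z_2,\beta}_t$ for all $t\ge0$, $\mathbb Q$-a.s., whenever $z_1\le z_2$.

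\textbf{Step 3: comparison and infimum.} Fix $\beta\in\mathbb R$, $0<z_1\le z_2$ and an arbitrary $\mathbb F^{W,\mathbb Q}$-stopping time $\tau$. The set of admissible stopping times does not depend on $z$, because the filtration is generated by $W^{\mathbb Q}$ alone. By Steps 1 and 2, pathwise,
\[
\int_0^\tau e^{-rt}\big(\tilde u'(\hat Z^{z_1,\beta}_t)+\ell\big)dt\le \int_0^\tau e^{-rt}\big(\tilde u'(\hat Z^{z_2,\beta}_t)+\ell\big)dt .
\]
Both sides are integrable, since they are dominated in absolute value by $\int_0^\infty e^{-rt}|\tilde u'(\hat Z_t)+\ell|dt$, which has finite $\mathbb Q$-expectation by Proposition \ref{wellposedness}. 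Taking expectations therefore gives
\[
\mathbb E^{\mathbb Q}_{z_1,\beta}\Big[\int_0^\tau e^{-rt}\big(\tilde u'(\hat Z_t)+\ell\big)dt\Big]\le \mathbb E^{\mathbb Q}_{z_2,\beta}\Big[\int_0^\tau e^{-rt}\big(\tilde u'(\hat Z_t)+\ell\big)dt\Big].
\]
This yields $v(z_1,\beta)\le \mathbb E^{\mathbb Q}_{z_2,\beta}[\cdots]$ for every $\tau$. Taking the infimum over $\tau$ on the right gives $v(z_1,\beta)\le v(z_2,\beta)$, which is the claim.

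\textbf{Main obstacle.} There is no analytic difficulty here. The only delicate point is Step 1: the direction of the monotonicity rests entirely on $\tilde u'(z)=-z^{-1/\gamma}$ being increasing, so the derivative has to be read off \eqref{legendre} rather than from the misprinted formula.
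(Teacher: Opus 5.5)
Your proof is correct and follows the same route as the paper, which presents the result as a direct consequence of the form of $v$ in \eqref{OST_new} and the $\mathbb{Q}$-a.s.\ monotonicity of $z\mapsto\hat Z^{z,\beta}_t$. You additionally make explicit two ingredients the paper leaves implicit: that $\tilde u'(z)=-z^{-1/\gamma}$ is increasing (you correctly flag the misprint $-z^{1/\gamma}$ after \eqref{OSTunderP}), and the integrability from Proposition~\ref{wellposedness} that justifies comparing the expectations.
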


\begin{remark}
Note that the monotonicity of $\beta\mapsto v(z,\beta)$ is not clear, since the process $(\hat{\beta}_t)_t$ also affects the volatility of $(\hat{Z}_t)_t$, and therefore comparison theorems for solutions to SDEs do not apply.
\end{remark}
The next results provide useful bounds and limit behavior of the value function $v$. Their proofs are given in the Appendix.

\begin{prop}\label{bounds}
We have
\[
-\mathbb{E}^\mathbb{Q}_{z,\beta}\bigg[\int_0^\infty e^{-rt} \hat{Z}_t^{-\frac{1}{\gamma}} dt\bigg] \le v(z,\beta) \le 0.
\]
\end{prop}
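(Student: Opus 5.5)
The two bounds come from two simple choices in the optimal stopping problem (\ref{OST_new}), with integrability supplied by Proposition \ref{wellposedness}.

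\emph{Upper bound.} The stopping time $\tau\equiv 0$ is admissible in (\ref{OST_new}). Its cost functional is the integral over the empty interval, hence equals zero. Since $v$ is an infimum over all $\mathbb{F}^{W,\mathbb{Q}}$-stopping times, this gives $v(z,\beta)\le 0$ immediately.

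\emph{Lower bound.} Fix an arbitrary stopping time $\tau$. Since $\tilde u'(\hat Z_t)=-\hat Z_t^{1/\gamma}$, the integrand is bounded below pointwise by $e^{-rt}\tilde u'(\hat Z_t)$, because $\ell>0$. As $\tilde u'\le 0$, extending the integration range from $[0,\tau]$ to $[0,\infty)$ only lowers the integral. Hence
\[
\int_0^\tau e^{-rt}\big(\tilde u'(\hat Z_t)+\ell\big)\,dt \;\ge\; -\int_0^\infty e^{-rt}\,\big|\tilde u'(\hat Z_t)\big|\,dt .
\]
The integrability in Proposition \ref{wellposedness} controls $\mathbb{E}^{\mathbb{Q}}_{z,\beta}\big[\int_0^\infty e^{-rt}|\tilde u'(\hat Z_t)+\ell|\,dt\big]$. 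Together with $\ell/r<\infty$, this shows that the right-hand side has finite expectation. It also makes the expectation on the left well defined, with no $\infty-\infty$ issue, for every $\tau$. Taking $\mathbb{E}^{\mathbb{Q}}_{z,\beta}$ (Tonelli applies to the nonnegative integrand) and then the infimum over $\tau$ yields $v(z,\beta)\ge -\mathbb{E}^{\mathbb{Q}}_{z,\beta}\big[\int_0^\infty e^{-rt}|\tilde u'(\hat Z_t)|\,dt\big]$.

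\emph{Matching the exponent.} The remaining step is to write $|\tilde u'(\hat Z_t)|$ in the form appearing in the statement. The stated bound uses the exponent $-1/\gamma$, whereas with $\tilde u'(z)=-z^{1/\gamma}$ the quantity that appears is $\hat Z_t^{1/\gamma}$. I would first recheck the Legendre transform in (\ref{legendre}). For $u(c)=c^{1-\gamma}/(1-\gamma)$, the maximiser is $c=z^{-1/\gamma}$, so $\tilde u'(z)=-z^{-1/\gamma}$. This matches (\ref{E(Z)ineq}) and the exponent in the statement; the expression ``$z^{1/\gamma}$'' written after (\ref{OSTunderP}) is then a typo. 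With this reading, $|\tilde u'(\hat Z_t)|=\hat Z_t^{-1/\gamma}$ and the lower bound is exactly the claimed one.

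Finiteness of the lower bound follows from (\ref{E(Z)ineq}) and Tonelli:
\[
\mathbb{E}^{\mathbb{Q}}_{z,\beta}\Big[\int_0^\infty e^{-rt}\hat Z_t^{-1/\gamma}\,dt\Big]\;\le\; z^{-1/\gamma}\int_0^\infty e^{-(r+(\delta-r)/\gamma)t}\,dt .
\]
This integral is finite as long as $r+(\delta-r)/\gamma>0$. That holds because $\gamma>1$, $r>0$ and $\delta>0$ give $r(1-1/\gamma)+\delta/\gamma>0$.

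The only real obstacle is the bookkeeping: confirming the sign and exponent convention of $\tilde u'$, and making sure the expectations are well defined for unbounded $\tau$. Both are handled by Proposition \ref{wellposedness}.
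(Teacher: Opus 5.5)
Your proof is correct and follows the paper's argument essentially verbatim: $\tau=0$ gives $v\le 0$, and for the lower bound the paper likewise drops $\ell>0$, extends the integral from $[0,\tau]$ to $[0,\infty)$, uses (\ref{E(Z)ineq}) together with $r+\frac{1}{\gamma}(\delta-r)>0$ for finiteness, and then takes the infimum over $\tau$. Your reading of the exponent is also the one the paper's proof uses, namely $\tilde{u}'(\hat Z_t)=-\hat Z_t^{-\frac{1}{\gamma}}$ from (\ref{legendre}). So the expression $-z^{1/\gamma}$ written after (\ref{OSTunderP}) is indeed a typo.
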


\begin{proof}
See Appendix \ref{proofbounds}.
\end{proof}
\begin{prop}\label{limits}
It holds that
\begin{equation*}
\lim_{z \to 0} v(z,\beta) = -\infty \quad \text{and} \quad \lim_{z \to \infty} v(z,\beta) = 0.
\end{equation*}
\end{prop}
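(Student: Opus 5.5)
The plan rests on the multiplicative structure of the state equation \eqref{hatZ}. It is linear in $\hat{Z}$, and the coefficients depend only on $\hat{\beta}$, which does not depend on $z$. By pathwise uniqueness of strong solutions this gives
\[
\hat{Z}^{z,\beta}_t = z\,\hat{Z}^{1,\beta}_t,\qquad t\ge 0,\quad \mathbb{Q}\text{-a.s.}
\]
The Legendre transform \eqref{legendre} gives $\tilde{u}'(y)=-y^{-1/\gamma}$; this is the form already used in Proposition \ref{bounds} and Proposition \ref{wellposedness}. Hence
\[
\tilde{u}'(\hat{Z}^{z,\beta}_t)=-z^{-\frac{1}{\gamma}}\big(\hat{Z}^{1,\beta}_t\big)^{-\frac{1}{\gamma}}.
\]
Set
\[
I(\beta):=\mathbb{E}^{\mathbb{Q}}_{1,\beta}\Big[\int_0^\infty e^{-rt}\big(\hat{Z}_t\big)^{-\frac{1}{\gamma}}dt\Big].
\]
By Tonelli and Proposition \ref{wellposedness}, $I(\beta)<\infty$: this follows either from the first statement there, or from integrating \eqref{E(Z)ineq} against $e^{-rt}$ under the standing assumptions. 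Also $I(\beta)>0$, because the integrand is strictly positive.

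\emph{Limit as $z\to\infty$.} By Proposition \ref{bounds} and the scaling above,
\[
-z^{-\frac{1}{\gamma}}I(\beta)\le v(z,\beta)\le 0 .
\]
Since $I(\beta)<\infty$ does not depend on $z$, letting $z\to\infty$ gives $v(z,\beta)\to 0$.

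\emph{Limit as $z\to 0$.} Here we need an upper bound, so we test $v$ with the admissible stopping time $\tau=\infty$; a deterministic $\tau=T$ would work equally well. This choice is legitimate because Proposition \ref{wellposedness} guarantees that the whole integral is absolutely integrable. It gives
\[
v(z,\beta)\le \mathbb{E}^{\mathbb{Q}}_{z,\beta}\Big[\int_0^\infty e^{-rt}\big(\ell-\hat{Z}_t^{-\frac{1}{\gamma}}\big)dt\Big]=\frac{\ell}{r}-z^{-\frac{1}{\gamma}}I(\beta).
\]
Because $0<I(\beta)<\infty$ and $z^{-1/\gamma}\to\infty$ as $z\downarrow 0$, the right-hand side tends to $-\infty$, and therefore so does $v(z,\beta)$.

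The only point needing care is that $I(\beta)$ is finite, so that the expectation can be split into the $\ell$-part and the $\hat{Z}$-part. This is exactly the content of Proposition \ref{wellposedness}, which we take as given. Everything else is the scaling identity together with the bounds of Proposition \ref{bounds}.
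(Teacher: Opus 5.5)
Your proposal is correct and is essentially the paper's proof. The paper also uses the scaling $\hat{Z}^{z,\beta}_t=z\hat{Z}^{1,\beta}_t$ together with the lower bound of Proposition \ref{bounds} for $z\to\infty$, and the suboptimal choice $\tau=\infty$ to get $v(z,\beta)\le \frac{\ell}{r}-z^{-\frac{1}{\gamma}}\,\mathbb{E}^{\mathbb{Q}}_{1,\beta}\big[\int_0^\infty e^{-rt}\hat{Z}_t^{-\frac{1}{\gamma}}dt\big]$ for $z\to 0$; you additionally make explicit the positivity of that expectation, which the paper uses only implicitly.
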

\begin{proof}
See Appendix \ref{prooflimits}.
\end{proof}

\begin{prop}\label{uppersemi}
The value function $v(z,\beta)$ is continuous for all $(z,\beta) \in \mathcal{O}$.    
\end{prop}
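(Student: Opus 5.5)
We prove continuity by showing that $v$ is simultaneously upper and lower semicontinuous on $\mathcal{O}$. Throughout we write
\[
J(z,\beta;\tau):=\mathbb{E}^\mathbb{Q}_{z,\beta}\Big[\int_0^\tau e^{-rt}\big(\ell-\hat Z_t^{1/\gamma}\big)\,dt\Big],
\]
so that $v=\inf_\tau J(\cdot;\tau)$.

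\emph{Upper semicontinuity.} For fixed $\tau$ we show $(z,\beta)\mapsto J(z,\beta;\tau)$ is continuous; then $v$, an infimum of continuous functions, is upper semicontinuous. The SDE system (\ref{hatZ})--(\ref{hatbeta}) is driven by a single Brownian motion and is easy to solve explicitly. First, $\hat\beta^\beta_t$ is an Ornstein--Uhlenbeck-type process that is affine in $\beta$: $\hat\beta^\beta_t=\hat\beta^0_t+\beta e^{-(\kappa-\sigma_\beta/\sigma)t}$. Second,
\[
\hat Z^{z,\beta}_t=z\exp\Big(\int_0^t\big(\delta-r+\tfrac{\hat\beta_s^2}{2\sigma^2}\big)ds-\int_0^t\tfrac{\hat\beta_s}{\sigma}\,dW^\mathbb{Q}_s\Big).
\]
This gives pathwise continuity of $(z,\beta)\mapsto \hat Z^{z,\beta}_t$ for all $t$, locally uniformly on compact time intervals. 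To pass to the limit in the expectation over the infinite horizon, we need uniform integrability of $\int_0^\infty e^{-rt}\big(\ell+(\hat Z_t^{z',\beta'})^{1/\gamma}\big)dt$ for $(z',\beta')$ in a small neighbourhood $K$ of $(z,\beta)$. We obtain it by bounding a slightly higher moment, say $\mathbb{E}^\mathbb{Q}\big[\sup_{(z',\beta')\in K}(\hat Z^{z',\beta'}_t)^{p/\gamma}\big]$ for some $p>1$, using the same exponential-martingale/Novikov computation as in the proof of Proposition \ref{wellposedness}. The strict inequality in Assumption \ref{Novikov_OST} leaves room to increase the exponent slightly. We also control the $\beta'$-dependence, which enters the exponent only through the additive term $\beta' e^{-(\kappa-\sigma_\beta/\sigma)s}$. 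With these bounds, dominated convergence together with a Vitali argument gives continuity of $J(\cdot;\tau)$.

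\emph{Lower semicontinuity.} Fix $(z_n,\beta_n)\to(z,\beta)$. We use the estimate
\[
|J(z_n,\beta_n;\tau)-J(z,\beta;\tau)|\le \mathbb{E}^\mathbb{Q}\Big[\int_0^\infty e^{-rt}\big|(\hat Z^{z_n,\beta_n}_t)^{1/\gamma}-(\hat Z^{z,\beta}_t)^{1/\gamma}\big|\,dt\Big]=:\varepsilon_n .
\]
The right-hand side does not depend on $\tau$, and all processes live on the same probability space with the same filtration, so stopping times are shared across initial data. Hence $\sup_\tau|J(z_n,\beta_n;\tau)-J(z,\beta;\tau)|\le\varepsilon_n$, and therefore $|v(z_n,\beta_n)-v(z,\beta)|\le\varepsilon_n$. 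This actually yields full continuity directly, subsuming the first step. It then remains to prove $\varepsilon_n\to0$. This follows from the pathwise convergence $\hat Z^{z_n,\beta_n}_t\to\hat Z^{z,\beta}_t$ together with the uniform integrability established above.

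\emph{Main obstacle.} The key difficulty is the uniform-in-neighbourhood integrability bound. The term $\hat\beta_s^2/(2\sigma^2)$ in the exponent of $\hat Z$ grows quadratically, and this must be controlled over an infinite horizon. We plan to handle it as in Appendix \ref{proofwellposedness}: rewrite $(\hat Z_t)^{q}$ for $q$ slightly above $1/\gamma$ as a Girsanov density times $\exp\big(\int_0^t(\cdots)\,ds\big)$. We then use Assumption \ref{Novikov_OST}, i.e.\ $\gamma(\kappa-\sigma_\beta/\sigma)>\sigma_\beta/\sigma$ up to constants, to show that the quadratic terms in $\hat\beta$ combine with the OU mean reversion to give a Riccati-type bound that is finite and exponentially dominated by $e^{-rt}$. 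The $\sup$ over $K$ is handled by monotonicity in $z$ together with the explicit affine dependence on $\beta$.
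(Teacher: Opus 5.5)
The overall architecture is fine. Your observation that the estimate is uniform in $\tau$, so that $|v(z_n,\beta_n)-v(z,\beta)|\le\varepsilon_n$ directly, is in fact a cleaner packaging than the paper's. The paper proves continuity of $\mathcal{J}(\tau;\cdot)$ for fixed $\tau$, then lower semicontinuity via $\varepsilon$-optimal times $\tau_n$; there, $\Delta_n\to0$ tacitly uses exactly this uniformity, because $\tau_n$ moves with $n$.

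\textbf{The gap is the running cost.} By \eqref{legendre}, $\tilde u(z)=\frac{\gamma}{1-\gamma}z^{-\frac{1-\gamma}{\gamma}}$, hence $\tilde u'(z)=-z^{-1/\gamma}$. The ``$-z^{1/\gamma}$'' written after \eqref{OSTunderP} is a misprint; the paper works with $\ell-\hat Z_t^{-1/\gamma}$ everywhere else (e.g.\ \eqref{E(Z)ineq}). For your integrand $\hat Z_t^{1/\gamma}$, the step you flag as the main obstacle does not go through:
\begin{itemize}
\item Writing $\hat Z_t^{q}$, $q>0$, as a stochastic exponential times $\exp\big(c\int_0^t\hat\beta_s^2ds\big)$ gives $c=\frac{q+q^2}{2\sigma^2}>0$.
\item Under the tilted measure, $\hat\beta$ mean-reverts at speed $\kappa-\frac{\sigma_\beta}{\sigma}(1+q)$.
\item Finiteness of $\mathbb{E}\big[\exp\big(c\int_0^t\hat\beta_s^2ds\big)\big]$ for all $t$ requires this speed to be at least $\frac{\sigma_\beta}{\sigma}\sqrt{q(1+q)}$, since otherwise the associated Riccati equation blows up in finite time. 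Domination by $e^{rt}$ requires even more.
\item Assumption~\ref{Novikov_OST} only makes this speed positive for $q$ at (or slightly above) $1/\gamma$.
\end{itemize}
So no Riccati bound of the kind you announce follows from the standing assumptions, and the functional you wrote need not even be finite.

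\textbf{With the correct sign, the obstacle disappears.} For $\hat Z_t^{-q}$ with $q=p/\gamma\in(0,1]$, the coefficient of $\int_0^t\hat\beta_s^2ds$ is $\frac{q^2-q}{2\sigma^2}\le0$. Hence, already by the supermartingale property of the stochastic exponential, $\mathbb{E}^{\mathbb{Q}}[(\hat Z^{z,\beta}_t)^{-p/\gamma}]\le z^{-p/\gamma}e^{-\frac{p}{\gamma}(\delta-r)t}$ uniformly in $\beta$, for any $1<p\le\gamma$. Then $\varepsilon_n\to0$ follows from three ingredients:
\begin{itemize}
\item Fubini;
\item Vitali at each fixed $t$, using your pathwise convergence plus this uniform $p$-th moment bound;
\item dominated convergence in $t$, with dominating function $C\,e^{-(r+\frac{1}{\gamma}(\delta-r))t}$, where $C$ is independent of $n$ since $z_n\to z>0$.
\end{itemize}
This is what the paper does, after peeling off the $z$-dependence through the exact scaling $\hat Z^{z,\beta}=z\tilde Z$. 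In particular, you need only $\sup_{K}\mathbb{E}[\,\cdot\,]$, not $\mathbb{E}[\sup_K\,\cdot\,]$. The pathwise control of the supremum over the neighbourhood, via monotonicity in $z$ and affine dependence on $\beta$, is therefore unnecessary.
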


\begin{proof}
\textbf{Step 1.} We first show that $(z,\beta)\mapsto v(z,\beta)$ is upper semicontinuous for all $(z,\beta)\in\mathcal{O}$. Given (\ref{OST_new}), it suffices to show that 
\[
\mathcal{J}(\tau;z,\beta):= \mathbb{E}^\mathbb{Q}_{z,\beta}\bigg[\int_0^\tau e^{-rt} \big(-\hat{Z}_t^{-\frac{1}{\gamma}} + \ell \big) dt \bigg]
\] 
is continuous for all $(z,\beta) \in \mathcal{O}$ and fixed $\tau\geq 0$.\\
\indent Fix $(z_0, \beta_0) \in \mathcal{O}$ and let $(z_n,\beta_n)\subseteq\mathcal{O}$ be a sequence converging to $(z_0,\beta_0)$. Then
\begin{align}{\label{uscinequality}}
\big|\mathcal{J}(\tau;z_n,\beta_n) - \mathcal{J}(\tau;z_0,\beta_0)\big| &\le 
\big|\mathcal{J}(\tau;z_n,\beta_n) - \mathcal{J}(\tau;z_0,\beta_n)\big|\\
&\quad+\big|\mathcal{J}(\tau;z_0,\beta_n) - \mathcal{J}(\tau;z_0,\beta_0)\big|\notag.
\end{align}
\indent For the first term on the right-hand side in (\ref{uscinequality}), we have
\begin{align}\label{uscinequality2}
\big|\mathcal{J}(\tau;z_n,\beta_n) - \mathcal{J}(\tau;z_0,\beta_n)\big| 
&\le \mathbb{E}^\mathbb{Q}\bigg[\int_0^\infty e^{-rt} \big|(z_0 \tilde{Z}_t)^{-\frac{1}{\gamma}} - (z_n \tilde{Z}_t)^{-\frac{1}{\gamma}}\big| dt \bigg] \notag\\
&= |z_0^{-\frac{1}{\gamma}} - z_n^{-\frac{1}{\gamma}}| \, \mathbb{E}^\mathbb{Q}\bigg[\int_0^\infty e^{-rt} \tilde{Z}_t^{-\frac{1}{\gamma}} dt \bigg]\\
&\leq |z_0^{-\frac{1}{\gamma}} - z_n^{-\frac{1}{\gamma}}| \int_0^\infty e^{-rt}e^{-\frac{1}{\gamma}(\delta-r)t}dt,\notag
\end{align} where we have set \begin{equation}\label{Tilde(Z)}
\tilde{Z}_t := \exp\Big(\int_0^t (\delta - r + \tfrac{1}{2} \tfrac{\hat{\beta}_s^2}{\sigma^2}) ds - \int_0^t \frac{\hat{\beta}_s}{\sigma} dW_s^{\mathbb{Q}} \Big),
\end{equation} for $t\geq 0$, and we have used (\ref{E(Z)ineq}) upon noticing that $\tilde{Z}_t=\frac{\hat{Z}_t}{z}$. Given that the integral on the right-hand side of (\ref{uscinequality2}) is finite as $r+\frac{1}{\gamma}(\delta-r)>0$ because $\gamma>1$, we find 
\begin{equation}\label{firstineq}
\lim_{(z_n,\beta_n)\to(z_0,\beta_0)}\big|\mathcal{J}(\tau;z_n,\beta_n) - \mathcal{J}(\tau;z_0,\beta_n)\big|= 0.
\end{equation}\\
\indent For the second term in (\ref{uscinequality}), Fubini-Tonelli Theorem yields
\begin{align}\label{uscdct}
\big|\mathcal{J}(\tau;z_0,\beta_n) - \mathcal{J}(\tau;z_0,\beta_0)\big| 
&\leq \int_0^\infty e^{-rt} \, \mathbb{E}^\mathbb{Q}\Big[\big|(\hat{Z}_t^{z_0,\beta_0})^{-\frac{1}{\gamma}} - (\hat{Z}_t^{z_0,\beta_n})^{-\frac{1}{\gamma}}\big|\Big] dt.
\end{align}
By the same arguments used in the proof of Proposition \ref{wellposedness} (see Appendix \ref{proofwellposedness}), one can show that
\[
e^{-rt} \, \mathbb{E}^\mathbb{Q}\Big[\big|(\hat{Z}_t^{z_0,\beta_0})^{-\frac{1}{\gamma}} - (\hat{Z}_t^{z_0,\beta_n})^{-\frac{1}{\gamma}}\big|\Big] \le 2 z_0^{-\frac{1}{\gamma}} e^{-(r + \frac{1}{\gamma} (\delta-r))t},
\]
with
\[
\int_0^\infty 2 z_0^{-\frac{1}{\gamma}} e^{-(r + \frac{1}{\gamma} (\delta-r))t} dt < \infty,
\]
given that $r+\frac{1}{\gamma}(\delta-r)>0$ by $\gamma>1$. Hence, an application of the Dominated Convergence Theorem in (\ref{uscdct}) yields
\begin{align}\label{cont.star}
&\lim_{(z_n,\beta_n)\to (z_0, \beta_0)} \int_0^\infty e^{-rt} \mathbb{E}^\mathbb{Q}\Big[\big|(\hat{Z}_t^{z_0,\beta_0})^{-\frac{1}{\gamma}} - (\hat{Z}_t^{z_0,\beta_n})^{-\frac{1}{\gamma}}\big|\Big] dt \\
&= \int_0^\infty \lim_{(z_n,\beta_n)\to (z_0, \beta_0)} e^{-rt} \mathbb{E}^\mathbb{Q}\Big[\big|(\hat{Z}_t^{z_0,\beta_0})^{-\frac{1}{\gamma}} - (\hat{Z}_t^{z_0,\beta_n})^{-\frac{1}{\gamma}}\big|\Big] dt.
\end{align}
Finally, by exploiting arguments as in the proof of Proposition \ref{wellposedness} again, we have under Assumption \ref{Novikov_OST} that 
\[
\mathbb{E}^\mathbb{Q}_{z,\beta}\big[(\hat{Z}_t^\beta)^{-p/\gamma}\big] \le z_0^{-p/\gamma} e^{-\frac{p}{\gamma} (\delta-r) t} < \infty,
\]
where $p$ is chosen such that \[1<p<\min\bigg\{\gamma, \frac{\gamma\sigma\bigg(\kappa-\frac{ \sigma_\beta}{\sigma}\bigg)}{\sigma_\beta}\bigg\},\] upon noticing that, by Assumption \ref{Novikov_OST}, we have $\gamma\sigma\bigg(\kappa-\frac{ \sigma_\beta}{\sigma}\bigg)>\sigma_\beta$. Therefore, by Vitali's Convergence Theorem and continuity of $\beta\mapsto \hat{Z}_t^{z,\beta}$, we conclude that
\[
\lim_{(z_n,\beta_n)\to (z_0,\beta_0)} \mathbb{E}^\mathbb{Q}\Big[\big|(\hat{Z}_t^{z_0,\beta_0})^{-\frac{1}{\gamma}} - (\hat{Z}_t^{z_0,\beta_n})^{-\frac{1}{\gamma}}\big|\Big] = 0,
\] which implies due to (\ref{uscdct}) and (\ref{cont.star}) that
\begin{equation}\label{secondineq}
\lim_{(z_n,\beta_n)\to(z_0,\beta_0)}\big|\mathcal{J}(\tau;z_0,\beta_n) - \mathcal{J}(\tau;z_0,\beta_0)\big|= 0.    
\end{equation} 
\indent Finally, by combining (\ref{firstineq}) and (\ref{secondineq}), we obtain
\[
\lim_{(z_n,\beta_n)\to (z_0, \beta_0)} \mathcal{J}(\tau;z_n,\beta_n) = \mathcal{J}(\tau;z_0,\beta_0).
\]
\textbf{Step 2.} Fix again $(z_0,\beta_0) \in \mathcal{O}$ and let $(z_n, \beta_n)\subseteq\mathcal{O}$ be a sequence converging to $(z_0, \beta_0)$ as $n \to \infty$. For each $(z_n,\beta_n)$, let $\tau_n$ be an $\varepsilon$-optimal stopping time for $(z_n,\beta_n)$, with $\varepsilon>0$; that is,
\[
v(z_n,\beta_n) \ge \mathcal{J}(\tau_n;z_n,\beta_n) - \varepsilon.
\]
Since $\tau_n$ is suboptimal for $(z_0,\beta_0)$, we have $v(z_0,\beta_0) \le \mathcal{J}(\tau_n;z_0,\beta_0)$.\\
\indent Defining
\[
\Delta_n := \mathcal{J}(\tau_n;z_n,\beta_n) - \mathcal{J}(\tau_n;z_0,\beta_0),
\]
it then holds
\[
|\Delta_n| \le 
|\mathcal{J}(\tau_n;z_n,\beta_n) - \mathcal{J}(\tau_n;z_0,\beta_n)| + 
|\mathcal{J}(\tau_n;z_0,\beta_n) - \mathcal{J}(\tau_n;z_0,\beta_0)|.
\]
By arguments as in Step 1 above, we have $\Delta_n \to 0$ as $n \to \infty$. Hence,
\begin{align*}
v(z_n,\beta_n) 
\ge \mathcal{J}(\tau_n;z_n,\beta_n) - \varepsilon = \mathcal{J}(\tau_n;z_0,\beta_0) + \Delta_n - \varepsilon \ge v(z_0,\beta_0) + \Delta_n - \varepsilon,
\end{align*}
which, by taking the limit as $n \to \infty$, yields
\[
\liminf_{n \to \infty} v(z_n,\beta_n) \ge v(z_0,\beta_0) - \varepsilon.
\]
Since $\varepsilon > 0$ was arbitrary, we have that $v$ is lower-semicontinuous. \\
\\
\textbf{Step 3.} Combining Step 1 and Step 2, we conclude that $v$ is continuous on $\mathcal{O}$.
\end{proof}
As it is customary in optimal stopping, we now define the continuation (waiting) and stopping regions as
\begin{equation}\label{regions}
\mathcal{W} := \{(z,\beta) \in \mathcal{O} : v(z,\beta) < 0\}, \quad
\mathcal{S} := \{(z,\beta) \in \mathcal{O} : v(z,\beta) = 0\}.
\end{equation}
By the continuity of $v$ (see Proposition \ref{uppersemi}), $\mathcal{W}$ is open and $\mathcal{S}$ is closed.  
Furthermore, the stopping time
\begin{equation}\label{deftau*}
\tau^*(z,\beta) := \inf \{ s \ge 0 : (\hat{Z}_s, \hat{\beta}_s) \in \mathcal{S} \}
\end{equation} 
is optimal (see Corollary 2.9 in Chapter 1 of \cite{peskir2006optimal}). 

\begin{prop}\label{Snonempty}
The stopping region $\mathcal{S}$ is non-empty; that is, $\mathcal{S} \neq \emptyset$.
\end{prop}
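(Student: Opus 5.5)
We argue by contradiction: we suppose $\mathcal{S}=\emptyset$, so that $v<0$ on all of $\mathcal{O}$. Then the optimal stopping time $\tau^*$ in \eqref{deftau*} equals $+\infty$ $\mathbb{Q}_{z,\beta}$-a.s. for every starting point. Consequently, for every $(z,\beta)\in\mathcal{O}$,
\[
v(z,\beta)=\mathbb{E}^\mathbb{Q}_{z,\beta}\bigg[\int_0^\infty e^{-rt}\big(-\hat{Z}_t^{-\frac{1}{\gamma}}+\ell\big)\,dt\bigg],
\]
where the expectation is well defined and finite by Proposition \ref{wellposedness}. The plan is to show that the right-hand side is strictly positive for $z$ large enough. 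This contradicts $v\le 0$ from Proposition \ref{bounds}; equivalently, it contradicts $v<0$ on $\mathcal{O}$.

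To carry this out, we split the expectation linearly. The $\ell$-part gives exactly $\ell/r$. For the other part we use the scaling $\hat{Z}_t=z\tilde{Z}_t$, with $\tilde{Z}$ as in \eqref{Tilde(Z)} independent of $z$, together with the bound \eqref{E(Z)ineq}. This yields
\[
\mathbb{E}^\mathbb{Q}_{z,\beta}\bigg[\int_0^\infty e^{-rt}\hat{Z}_t^{-\frac{1}{\gamma}}\,dt\bigg]\le z^{-\frac{1}{\gamma}}\int_0^\infty e^{-(r+\frac{1}{\gamma}(\delta-r))t}\,dt=\frac{z^{-\frac{1}{\gamma}}}{r+\frac{1}{\gamma}(\delta-r)}.
\]
This is finite because $r+\frac{1}{\gamma}(\delta-r)>0$ when $\gamma>1$, as already noted in the proof of Proposition \ref{uppersemi}. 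Hence
\[
v(z,\beta)\ge \frac{\ell}{r}-\frac{z^{-\frac{1}{\gamma}}}{r+\frac{1}{\gamma}(\delta-r)},
\]
and the right-hand side is strictly positive as soon as $z>\big(\tfrac{r}{\ell(r+\frac{1}{\gamma}(\delta-r))}\big)^{\gamma}$. This is the desired contradiction.

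An alternative, more direct way to phrase the same argument avoids contradiction. For any stopping time $\tau$,
\[
\mathbb{E}[\cdots]\ge -z^{-\frac{1}{\gamma}}\big/\big(r+\tfrac{1}{\gamma}(\delta-r)\big),
\]
while $v\le 0$. This alone only shows that $v\to0$ as $z\to\infty$, as in Proposition \ref{limits}, and is not enough to conclude. So we use the contradiction route, which exploits the structure of $\tau^*$.

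The only delicate point is the claim that, under $\mathcal{S}=\emptyset$, the value is attained at $\tau=\infty$. This requires the optimality of $\tau^*$ from Corollary 2.9 in \cite{peskir2006optimal}. Its applicability rests on the integrability of Proposition \ref{wellposedness}, since the integrand is not bounded. A second delicate point is the interpretation of $\int_0^\infty$ in the stopping functional, which is justified by dominated convergence using the same integrability. Everything else is a direct estimate.
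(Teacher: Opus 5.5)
Your proposal is correct and follows essentially the same route as the paper. You assume $\mathcal{S}=\emptyset$, so that $\tau^*=\infty$ and $v$ equals the full-horizon functional; by \eqref{E(Z)ineq} this is bounded below by $\frac{\ell}{r}-\frac{z^{-1/\gamma}}{r+\frac{1}{\gamma}(\delta-r)}$, which is strictly positive for $z>\big(\frac{r}{\ell(r+\frac{1}{\gamma}(\delta-r))}\big)^{\gamma}$, the same threshold as in \eqref{nonemptyz}, and the contradiction follows — with the step $v=\mathcal{J}(\infty;z,\beta)$ resting, as you note, on the optimality of $\tau^*$ that the paper also invokes just before the proposition.
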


\begin{proof}
Suppose $\mathcal{S}=\emptyset$. Then for all $(z,\beta)\in\mathcal{O}$ we have by (\ref{OST_new}), the fact that $\tilde{u}'(\hat{Z}_t)=-\hat{Z}_t^{-\frac{1}{\gamma}}$ (cf.\ (\ref{legendre})), and (\ref{E(Z)ineq}) that
\begin{align*}
0>v(z,\beta)=\mathbb{E}^\mathbb{Q}_{z,\beta}\bigg[\int_0^\infty e^{-rt} \big(-\hat{Z}_t^{-\frac{1}{\gamma}} + \ell \big) dt \bigg] 
&= \frac{\ell}{r} - \mathbb{E}^\mathbb{Q}_{z,\beta}\bigg[\int_0^\infty e^{-rt} \hat{Z}_t^{-\frac{1}{\gamma}} dt \bigg] \\
&\ge \frac{\ell}{r} - z^{-\frac{1}{\gamma}} \int_0^\infty e^{-(\frac{1}{\gamma}(\delta-r) + r) t} dt \\
&= \frac{\ell}{r} - \frac{z^{-\frac{1}{\gamma}}}{\frac{1}{\gamma}(\delta-r) + r}.
\end{align*}
However, the last expression is strictly positive if 
\begin{equation}\label{nonemptyz}
z > \Big(\frac{\ell}{r} \big(\frac{1}{\gamma} (\delta-r) + r \big)\Big)^{-\gamma} > 0,
\end{equation}
which gives the desired contradiction.
\end{proof}
\subsection{Optimal Stopping Boundary and Regularity of the Value Function}\label{freeboundarysection}
In this section, we establish the existence of a lower-semicontinuous optimal stopping boundary (free boundary) that separates continuation and stopping regions and prove further regularity of the value function of the optimal stopping problem.\\
\indent We first show that the boundary $\partial\mathcal{W}$ can be represented by a function 
$z^*:\mathbb{R}\rightarrow [\ell^{-\gamma},\infty]$ and establish connectedness of $\mathcal{W}$ and $\mathcal{S}$
with respect to the $z$-variable.

\begin{lemma}\label{freeboundaryexistence}
There exists a free boundary $z^*:\mathbb{R}\rightarrow [0,\infty]$ such that
\begin{equation}\label{Sinproof}
\mathcal{S}=\{(z,\beta)\in\mathcal{O}: z\geq z^*(\beta)\}.
\end{equation}
Moreover, we have
\begin{equation}\label{z*upperbound}
0<\ell^{-\gamma}\leq z^*(\beta)
\quad\text{for all }\beta\in\mathbb{R},
\end{equation} and $\beta\mapsto z^*(\beta)$ is lower-semicontinuous.
\end{lemma}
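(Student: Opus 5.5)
We will define
\[
z^*(\beta):=\inf\{z>0:\ v(z,\beta)=0\}\qquad(\inf\emptyset:=+\infty),
\]
and derive the three claims of Lemma \ref{freeboundaryexistence} from monotonicity, continuity and sign properties of $v$ established above.

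\textbf{Step 1: structure of $\mathcal{S}$.} By Proposition \ref{bounds}, $v\le 0$ everywhere. By Proposition \ref{monotonicity}, $z\mapsto v(z,\beta)$ is nondecreasing. Hence, if $v(z_0,\beta)=0$, then $0\ge v(z,\beta)\ge v(z_0,\beta)=0$ for every $z\ge z_0$. So each $\beta$-section of $\mathcal{S}$ is an up-interval in $z$, and it is closed by the continuity of $v$ (Proposition \ref{uppersemi}). This gives \eqref{Sinproof} with $z^*$ as defined. Moreover $z^*(\beta)>0$, since $\lim_{z\to0}v(z,\beta)=-\infty$ by Proposition \ref{limits}.

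\textbf{Step 2: the lower bound $z^*(\beta)\ge \ell^{-\gamma}$.} Fix $(z,\beta)$ with $z<\ell^{-\gamma}$; we show $v(z,\beta)<0$. The running payoff is $-\hat Z_t^{-1/\gamma}+\ell$, and it is strictly negative exactly when $\hat Z_t<\ell^{-\gamma}$. Take $\tau_\varepsilon:=\inf\{t\ge0:\hat Z_t\ge z+\varepsilon\}\wedge 1$ with $z+\varepsilon<\ell^{-\gamma}$. By path continuity of $\hat Z$ started at $z$, $\tau_\varepsilon>0$ $\mathbb{Q}$-a.s. On $[0,\tau_\varepsilon)$ the integrand is bounded above by $-(z+\varepsilon)^{-1/\gamma}+\ell<0$. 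Therefore
\[
v(z,\beta)\le \mathbb{E}^{\mathbb{Q}}_{z,\beta}\Big[\int_0^{\tau_\varepsilon}e^{-rt}\big(\ell-(z+\varepsilon)^{-1/\gamma}\big)dt\Big]<0 .
\]
So $(z,\beta)\in\mathcal{W}$, and hence $z^*(\beta)\ge \ell^{-\gamma}$ for all $\beta$.

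\textbf{Step 3: lower semicontinuity of $z^*$.} Take $\beta_n\to\beta$ and suppose $\liminf_n z^*(\beta_n)=L<\infty$; otherwise there is nothing to prove. Pass to a subsequence with $z^*(\beta_n)\to L$. The points $(z^*(\beta_n),\beta_n)$ lie in $\mathcal{S}$, since $z^*(\beta_n)\ge\ell^{-\gamma}>0$ and each section of $\mathcal{S}$ is closed. They converge to $(L,\beta)$, which lies in $\mathcal{O}$ because $L\ge \ell^{-\gamma}>0$. Since $\mathcal{S}$ is closed in $\mathcal{O}$ by continuity of $v$, we get $(L,\beta)\in\mathcal{S}$, and so $z^*(\beta)\le L$.

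This closedness argument is the only place where continuity of $v$ is used in full two-dimensional generality. The positive lower bound from Step 2 is what keeps the limit point away from the edge $\{z=0\}$ of $\mathcal{O}$, so Step 2 is the main subtlety. If the stopping-time argument there needs extra justification, a backup is to note that $\tau^*\ge \inf\{t:\hat Z_t\ge \ell^{-\gamma}\}$: stopping strictly inside $\{z<\ell^{-\gamma}\}$ is strictly suboptimal, because continuing a little longer strictly lowers the cost.

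Finiteness of $z^*$ is not asserted, since the range of $z^*$ includes $\infty$. Proposition \ref{Snonempty} only guarantees $z^*<\infty$ for some $\beta$.
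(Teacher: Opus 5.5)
Your proof is correct and follows the paper's route. It uses the same definition of $z^*$ via monotonicity of $z\mapsto v(z,\beta)$, the same lower bound $\ell^{-\gamma}$ from the sign of the running cost, and the same derivation of lower semicontinuity from closedness of $\mathcal{S}$. Your explicit stopping time $\tau_\varepsilon$ makes the paper's informal ``continue for a short time'' argument rigorous, and you correctly note a detail the paper leaves implicit: the uniform bound $z^*\ge\ell^{-\gamma}$ is what keeps the limit points of $(z^*(\beta_n),\beta_n)$ inside $\mathcal{O}$.
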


\begin{proof}
By Proposition~\ref{monotonicity}, we have that $z\mapsto v(z,\beta)$ is nondecreasing for all $\beta\in\mathbb{R}$.  
Hence, by defining
\begin{equation}\label{defz*}
z^*(\beta):=\inf\{z>0: v(z,\beta)\geq 0\}
\end{equation}
(with the convention $\inf\emptyset=+\infty$), it follows from (\ref{regions}) that
\[
\mathcal{S}=\{(z,\beta)\in\mathcal{O}: z\geq z^*(\beta)\},\quad \text{and}\quad \mathcal{W}=\{(z,\beta)\in\mathcal{O}: z<z^*(\beta)\}.
\]
For the lower bound of $z^*$, we have from (\ref{OST_new}) and $\tilde{u}'(z)=-z^{-\frac{1}{\gamma}}$ that, if $-z^{-\frac{1}{\gamma}}+\ell < 0$, it is optimal to continue, as stopping immediately yields $0$ while continuing for a short time yields a negative contribution to the cost functional. Hence, we obtain \begin{align*}&\{(z,\beta)\in\mathcal{O}\mid-z^{-\frac{1}{\gamma}}+\ell<0\}\subseteq \mathcal{W}\iff \{(z,\beta)\in\mathcal{O}\mid-z^{-\frac{1}{\gamma}}+\ell\geq0\}\supseteq \mathcal{S}\\
&\iff \{(z,\beta)\in\mathcal{O}\mid z\geq\ell^{-\gamma}\}\supseteq \mathcal{S}.\end{align*} It then follows from (\ref{defz*}) that $z^*(\beta)\geq \ell^{-\gamma}>0,\;\text{for all}\;\beta\in\mathbb{R}.$\\
\indent Finally, lower-semicontinuity of $z^*$ is due to the fact that (\ref{Sinproof}) is closed thanks to (\ref{regions}) and continuity of $v$ (cf. Proposition \ref{uppersemi}).
\end{proof}
We continue by proving (local) Lipschitz continuity of $v$ and probabilistic representations of its weak derivatives.
\begin{prop}\label{ProbRepr}
The value function $v$ is (locally) Lipschitz continuous on $\mathcal{O}$. Moreover, its weak derivatives, denoted by $v_z$ and $v_\beta$, admit the following probabilistic representations:
\begin{equation}\label{V_z}
v_z(z,\beta) = \mathbb{E}^\mathbb{Q}_{z,\beta}\Big[\int_0^{\tau^*} e^{-rt} \frac{1}{\gamma} z^{-1} \hat{Z}_t^{-\frac{1}{\gamma}} dt \Big],
\end{equation}
and
\begin{equation}\label{V_beta}
v_\beta(z,\beta)=\mathbb{E}^\mathbb{Q}_{z,\beta}\left[\int_0^{\tau^*} e^{-rt}
\left(\frac{1}{\gamma}(\hat Z_t)^{-\frac{1}{\gamma}}
\left(\int_0^t e^{-as}\frac{\hat{\beta}_s^{\beta}}{\sigma^2}ds
     -\frac{1}{\sigma}\int_0^t e^{-as}dW_s^{\mathbb{Q}}\right)\right)dt\right],
\end{equation} where $a := \kappa - \frac{\sigma_\beta}{\sigma}>0$ by Assumption \ref{assumption_novikov}.
\end{prop}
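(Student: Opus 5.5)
We prove the local Lipschitz property and the derivative formulas together. We compare $v$ at nearby points using the optimal stopping time $\tau^*$ of one of them, and then pass to the limit in difference quotients.

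\textbf{Explicit dependence on the initial data.} Write $\hat Z^{z,\beta}_t=z\tilde Z^\beta_t$ with $\tilde Z$ as in (\ref{Tilde(Z)}). This gives $\partial_z(\hat Z_t^{z,\beta})^{-1/\gamma}=-\frac1\gamma z^{-1}(\hat Z_t^{z,\beta})^{-1/\gamma}$. The equation (\ref{hatbeta}) is linear with mean-reversion rate $a=\kappa-\sigma_\beta/\sigma$, so $\partial_\beta\hat\beta^\beta_s=e^{-as}$. Differentiating
\[
\log\tilde Z^\beta_t=\int_0^t\Big(\delta-r+\tfrac12\tfrac{(\hat\beta^\beta_s)^2}{\sigma^2}\Big)ds-\int_0^t\tfrac{\hat\beta^\beta_s}{\sigma}dW^{\mathbb{Q}}_s
\]
in $\beta$ gives
\[
\partial_\beta\log\hat Z_t=\int_0^t e^{-as}\frac{\hat\beta_s}{\sigma^2}ds-\frac1\sigma\int_0^te^{-as}dW^{\mathbb Q}_s .
\]
Hence $\partial_\beta(-\hat Z_t^{-1/\gamma})=\frac1\gamma\hat Z_t^{-1/\gamma}\,\partial_\beta\log\hat Z_t$, which is precisely the integrand in (\ref{V_beta}).

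\textbf{Upper and lower difference bounds.} Fix $(z,\beta)$ and $\varepsilon>0$ small. Let $\tau^*=\tau^*(z,\beta)$ be optimal at $(z,\beta)$, and $\tau^\varepsilon=\tau^*(z+\varepsilon,\beta)$ optimal at $(z+\varepsilon,\beta)$. Then
\[
\mathcal J(\tau^\varepsilon;z+\varepsilon,\beta)-\mathcal J(\tau^\varepsilon;z,\beta)\le v(z+\varepsilon,\beta)-v(z,\beta)\le \mathcal J(\tau^*;z+\varepsilon,\beta)-\mathcal J(\tau^*;z,\beta).
\]
Each side equals
\[
\mathbb E^{\mathbb Q}\Big[\int_0^{\tau}e^{-rt}\big(z^{-1/\gamma}-(z+\varepsilon)^{-1/\gamma}\big)(\tilde Z_t)^{-1/\gamma}dt\Big].
\]
By (\ref{E(Z)ineq}) and $r+\frac1\gamma(\delta-r)>0$, both sides are bounded by $C|z^{-1/\gamma}-(z+\varepsilon)^{-1/\gamma}|$, uniformly on compacts of $\mathcal O$. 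The same argument in $\beta$ bounds the $\beta$-increment by
\[
\mathbb E^{\mathbb Q}\Big[\int_0^\infty e^{-rt}\big|\hat Z_t^{-1/\gamma}(z,\beta+\varepsilon)-\hat Z_t^{-1/\gamma}(z,\beta)\big|dt\Big].
\]
By the mean value theorem, this is at most $\varepsilon\,\mathbb E\int_0^\infty e^{-rt}\sup_{|\eta-\beta|\le\varepsilon}\hat Z_t^{-1/\gamma}|\partial_\beta\log\hat Z_t|\,dt$. Together these give local Lipschitz continuity, hence (Rademacher) weak derivatives exist a.e.

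\textbf{Identifying the derivatives.} Divide the two-sided bounds by $\varepsilon$ and let $\varepsilon\to0$. The upper bound converges to (\ref{V_z}) directly by dominated convergence. For the lower bound, use the time $\tau^\varepsilon$ and the convergence $\tau^\varepsilon\to\tau^*$. This follows from continuity of $v$ and closedness of $\mathcal S$, via the standard argument that the first entry time into $\mathcal S$ is continuous in the initial data. Alternatively, it suffices to identify the weak derivative with the one-sided limit from the $\tau^*$-bound: a locally Lipschitz function whose one-sided difference quotients are bounded by a continuous function $g$ from both directions ($\pm\varepsilon$) has weak derivative equal to $g$ a.e. This alternative avoids continuity of $\tau^\varepsilon$ entirely. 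Concretely, using $\tau^*$ with $+\varepsilon$ and $-\varepsilon$ gives $\limsup$ of the forward quotient $\le g$ and $\liminf$ of the backward quotient $\ge g$. At points of differentiability these coincide, so $v_z=g$ a.e.

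\textbf{Main obstacle.} The hardest step is the uniform integrability needed in the $\beta$-direction. It requires
\[
\mathbb E\big[\sup_{|\eta-\beta|\le1}\hat Z_t^{-1/\gamma}\,|\partial_\beta\log\hat Z_t|\big]
\]
to be integrable against $e^{-rt}$ on $(0,\infty)$. The factor $\partial_\beta\log\hat Z_t$ has only polynomial growth in $t$: it is a Gaussian-type stochastic integral plus $\int e^{-as}\hat\beta_s ds$. So I would apply Hölder with exponent $p\in\big(1,\min\{\gamma,\gamma\sigma a/\sigma_\beta\}\big)$, exactly as in the proof of Proposition~\ref{uppersemi}. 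This gives $\mathbb E[\hat Z_t^{-p/\gamma}]\le z^{-p/\gamma}e^{-\frac p\gamma(\delta-r)t}$ under Assumption~\ref{Novikov_OST}. The conjugate factor is bounded by BDG by $C(1+t^{q})$, using moments of the Ornstein–Uhlenbeck process $\hat\beta$. The exponential decay $e^{-(r+\frac1\gamma(\delta-r))t}$ then absorbs the polynomial factor. The supremum over $\eta$ is handled by the explicit affine dependence of $\hat\beta$ on $\beta$.
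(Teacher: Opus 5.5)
Your proposal is correct and is essentially the paper's proof. The shared steps are: comparing $v$ at the base point and at $z\pm\varepsilon$ (resp.\ $\beta\pm\varepsilon$) using the base point's optimal time $\tau^*$; using $\partial_\beta\hat\beta^\beta_s=e^{-as}$; H\"older with $p\in\bigl(1,\min\{\gamma,\gamma\sigma a/\sigma_\beta\}\bigr)$ together with $\mathbb{E}^{\mathbb{Q}}[\hat Z_t^{-p/\gamma}]\le z^{-p/\gamma}e^{-\frac{p}{\gamma}(\delta-r)t}$; and identifying $v_z,v_\beta$ from the $\limsup$ of forward and the $\liminf$ of backward difference quotients at points of differentiability. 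There are two minor technical differences. The paper bounds $\|\partial_\beta\log\hat Z_t\|_{L^q}$ uniformly in $t$, by splitting it into a bounded deterministic part and a Gaussian part of bounded variance, rather than polynomially. Your $\sup_{|\eta-\beta|\le 1}$ domination also needs a small justification: since $\log\hat Z^\eta_t$ is quadratic in $\eta$ with a nonnegative deterministic second-order term, the supremum costs only a factor $e^{|\partial_\beta\log\hat Z_t|/\gamma}$, which has moments of all orders uniformly in $t$ by Gaussianity.

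You should drop the first option based on $\tau^\varepsilon\to\tau^*$. Closedness of $\mathcal S$ and continuity of $v$ give at most lower semicontinuity of the entry time in the initial data. Full continuity would require regularity of boundary points, which the paper explicitly notes is not available here. Keep your fallback argument with $\tau^*$ and $\pm\varepsilon$, which is exactly the paper's argument.
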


\begin{proof}
See Appendix \ref{proofprobrepr}.
\end{proof}
Standard results in optimal stopping theory (cf.\ Chapter 3 in \cite{peskir2006optimal}) together with the previous findings imply that the couple $(v,z^*)$ satisfies the free boundary problem

\begin{numcases}{} 
  \mathcal{L}v-rv+\tilde{u}'(z)+\ell=0 & on $0 < z < z^*(\beta)$ \label{freeboundary1} \\
  v = 0 & on $z \geq z^*(\beta)$, \notag
\end{numcases}
where $\mathcal{L}$ is the infinitesimal generator of the process $(\hat{Z}_t,\hat{\beta}_t)_t$ such that
\begin{equation}\label{generator}(\mathcal{L}v)(z,\beta)=\frac{1}{2}\frac{\beta^2}{\sigma^2}(2zv_z+z^2v_{zz})+\frac{1}{2}\sigma_\beta^2 v_{\beta\beta}+(\delta-r)zv_z+\kappa(\overline{\beta}-\beta)v_\beta
+\frac{\beta}{\sigma}\sigma_\beta\,(v_{\beta}+zv_{z\beta}),
\end{equation} 
and the PDE above is intended in the sense of Schwartz distributions (see Corollary 5 in \cite{peskir2025weak}). In Proposition \ref{C^2W} below we will show that $v$ actually solves (\ref{freeboundary1}) in the classical sense. In order to achieve this, we need the following result.

\begin{lemma}\label{strongfeller}
The process $(\hat{Z}_t,\hat{\beta}_t)_t$, given by (\ref{hatZ}) and (\ref{hatbeta}), is strong Feller.
\end{lemma}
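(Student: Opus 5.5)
The plan is to obtain the strong Feller property from the existence of a smooth transition density. That density will come from H\"ormander's theorem applied to the generator $\mathcal{L}$ in (\ref{generator}). The first step is to change variables to $Y_t:=\log \hat Z_t$. This removes the boundary at $z=0$ and gives an SDE on all of $\mathbb{R}^2$ with smooth coefficients:
\[
dY_t=\Big(\delta-r+\tfrac{\hat\beta_t^2}{2\sigma^2}\Big)dt-\tfrac{\hat\beta_t}{\sigma}\,dW^{\mathbb{Q}}_t,\qquad d\hat\beta_t=\Big(\kappa\overline\beta-a\hat\beta_t\Big)dt-\sigma_\beta\,dW^{\mathbb{Q}}_t,
\]
where $a=\kappa-\sigma_\beta/\sigma$. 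Since $z\mapsto\log z$ is a homeomorphism of $(0,\infty)$ onto $\mathbb{R}$, strong Feller for $(Y,\hat\beta)$ is equivalent to strong Feller for $(\hat Z,\hat\beta)$.

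Next I would write the system in Stratonovich form. The diffusion vector field is $A_1(y,\beta)=(-\beta/\sigma,\,-\sigma_\beta)^\top$. The Stratonovich correction $-\tfrac12 (DA_1)A_1$ is computed from $DA_1=\begin{pmatrix}0&-1/\sigma\\0&0\end{pmatrix}$, which gives $(DA_1)A_1=(\sigma_\beta/\sigma,0)^\top$. Hence the drift becomes $A_0=(\delta-r+\beta^2/(2\sigma^2)-\sigma_\beta/(2\sigma),\ \kappa\overline\beta-a\beta)^\top$. The key computation is the Lie bracket $[A_1,A_0]=DA_0\,A_1-DA_1\,A_0$. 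Its second component equals $-a\cdot(-\sigma_\beta)-0=a\sigma_\beta$. Its first component equals $(\beta/\sigma^2)(-\sigma_\beta)-(-\tfrac1\sigma)(\kappa\overline\beta-a\beta)$. The determinant of the pair $(A_1,[A_1,A_0])$ can then be checked directly. The $\beta$-dependent terms combine, and the determinant reduces to a constant (up to sign) multiple of $\sigma_\beta\kappa\overline\beta/\sigma$, which is nonzero since $\sigma_\beta,\kappa,\overline\beta>0$. If that combination does not cancel exactly as expected, I would add the further bracket $[A_1,[A_1,A_0]]=(\sigma_\beta^2/\sigma^2,0)^\top$. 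This vector is a nonzero constant in the $y$-direction, so together with $A_1$ (whose $\beta$-component is $-\sigma_\beta\neq0$) it spans $\mathbb{R}^2$ at every point. Either way, the parabolic H\"ormander condition holds everywhere.

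By H\"ormander's theorem, in its probabilistic Malliavin-calculus form (e.g.\ Nualart, Thm.\ 2.3.2, or Kusuoka--Stroock), the law of $(Y_t,\hat\beta_t)$ for $t>0$ has a density $p_t(x,\cdot)$ that is smooth jointly in $(t,x,x')$. Strong Feller then follows from the standard argument. For bounded measurable $f$, we have $P_tf(x)=\int f(x')p_t(x,x')dx'$, so continuity in $x$ follows by dominated convergence provided $x\mapsto p_t(x,\cdot)$ is continuous in $L^1$. That $L^1$-continuity follows from pointwise continuity of the densities together with Scheff\'e's lemma, since each $p_t(x,\cdot)$ integrates to $1$.

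I expect the main obstacle to be a technical one: the coefficients have quadratic growth ($\beta^2$ in the drift, $\beta$ in the diffusion), whereas the textbook versions of H\"ormander's theorem for SDEs assume bounded derivatives. I would handle this by localization. The coefficients are smooth, and the process is non-explosive with moments of all orders, because $\hat\beta$ is Gaussian/OU and $Y$ is then an explicit integral. For bounded, strictly positive smooth cutoff modifications that agree with the true coefficients on balls $B_R$, the localized H\"ormander theorem (or the PDE hypoellipticity of $\partial_t-\mathcal{L}$) gives smoothness of the density. The exit probability from $B_R$ before time $t$ tends to $0$ locally uniformly in the starting point, and this transfers strong Feller to the original process. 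Equivalently, one can invoke hypoellipticity of $\partial_t-\mathcal{L}^*$ to get smoothness of the transition kernel in the distributional sense and then argue as above.
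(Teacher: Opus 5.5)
Your strategy is the paper's: verify H\"ormander's condition with the diffusion field and a second-order bracket, then deduce the strong Feller property. The paper works directly in $(z,\beta)$ and cites Proposition 4 of \cite{ernst2024quickest} for the last step; your logarithmic change of variables is a harmless simplification. However, the bracket computation, which is the whole content of the lemma, is wrong on both branches of your ``either way''. Your first component of $[A_1,A_0]$ simplifies to $\kappa(\overline\beta-\beta)/\sigma$, so
\[
\det\big(A_1,[A_1,A_0]\big)=\frac{\sigma_\beta}{\sigma^2}\Big(\kappa\overline\beta\sigma-(2\kappa\sigma-\sigma_\beta)\beta\Big).
\]
This vanishes on the line $\beta=\kappa\overline\beta\sigma/(2\kappa\sigma-\sigma_\beta)$. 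The $\beta$-terms do not cancel, and first-order brackets do not suffice. For the fallback, $D[A_1,A_0]=\begin{pmatrix}0&-\kappa/\sigma\\0&0\end{pmatrix}$, and one gets
\[
[A_1,[A_1,A_0]]=D[A_1,A_0]\,A_1-DA_1\,[A_1,A_0]=\Big(\frac{\sigma_\beta}{\sigma^2}(2\kappa\sigma-\sigma_\beta),\,0\Big)^\top,
\]
not $(\sigma_\beta^2/\sigma^2,0)^\top$. You dropped two contributions of $a\sigma_\beta/\sigma$ each: one from the $-a\beta/\sigma$ term in $[A_1,A_0]$, and one from $-DA_1[A_1,A_0]$.

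The corrected vector is exactly the negative of the paper's $[V^{(1)},[V^{(0)},V^{(1)}]]$, written in the coordinates $\partial_y=z\partial_z$. Its non-vanishing is precisely where Assumption \ref{assumption_novikov} enters: $\kappa\sigma>\sigma_\beta$ gives $2\kappa\sigma-\sigma_\beta>0$. Hence $\det\big(A_1,[A_1,[A_1,A_0]]\big)=\frac{\sigma_\beta^2}{\sigma^2}(2\kappa\sigma-\sigma_\beta)\neq0$ at every point. Your version makes the condition look parameter-free and hides the role of the assumption. The constant determinant you expected in the first branch occurs only when $2\kappa\sigma=\sigma_\beta$, which the assumption excludes. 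With these corrections, the H\"ormander verification goes through and coincides with the paper's.

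Two smaller points concern the density-to-strong-Feller step. Theorems 2.3.2 and 2.3.3 in \cite{nualart2006malliavin} give a (smooth) density of the law of $X_t$ for a fixed initial point, under coefficients with bounded derivatives. Your Scheff\'e argument, however, needs continuity of $x\mapsto p_t(x,\cdot)$ in the initial point. That requires either Kusuoka--Stroock-type joint regularity, or hypoellipticity of $\partial_t-\mathcal{L}$ in the backward variables applied to $(t,x)\mapsto P_tf(x)$. In the localization, the truncated coefficients must preserve H\"ormander's condition outside $B_R$, so that the truncated process $X^R$ is itself strong Feller. Continuity of $P_tf$ then follows from $|P_tf(x)-P^R_tf(x)|\le 2\|f\|_\infty\,\mathbb{P}_x(\tau_R\le t)\to0$ locally uniformly in $x$. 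The paper sidesteps both issues by invoking the cited result that hypoellipticity of $\mathcal{L}$ yields the strong Feller property.
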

\begin{proof}
Recall the dynamics of the process $(\hat{Z}_t,\hat{\beta}_t)_t$ (cf.\;(\ref{hatZ}) and (\ref{hatbeta})),
\[
d\hat{Z}_t=-\frac{\hat{\beta}_t}{\sigma}\hat{Z}_t\, dW_t^{\mathbb{Q}}+\hat{Z}_t\bigg(\delta-r+\frac{\hat{\beta}_t^2}{\sigma^2}\bigg)dt,\quad t>0,\quad \hat{Z}_0=z>0,
\]
and
\[
d\hat{\beta}_t=-\sigma_\beta\, dW_t^{\mathbb{Q}}
+\bigg(\kappa(\overline{\beta}-\hat{\beta}_t)+\frac{\hat{\beta}_t}{\sigma}\sigma_\beta\bigg)dt,\quad t>0,\quad \hat{\beta}_0=\beta\in \mathbb{R}.
\]
\indent Notice that (\ref{generator}) is not uniformly elliptic. As a matter of fact, denoting by $\Sigma(z,\beta)$ the diffusion matrix associated to (\ref{hatZ})-(\ref{hatbeta}) one has $\det(\Sigma\Sigma^T(z,\beta))=0$. We therefore now check that the process $(\hat{Z}_t,\hat{\beta}_t)_t$ satisfies the so-called Hörmander’s condition (see, e.g., condition (H) in Section 2.3.2 in \cite{nualart2006malliavin}). This implies that the second-order infinitesimal generator $\mathcal{L}$ of $(\hat{Z}_t,\hat{\beta}_t)_t$ as in (\ref{generator}) is hypoelliptic and therefore that $(\hat{Z}_t,\hat{\beta}_t)_t$ is a strong Feller process (see Proposition 4 in \cite{ernst2024quickest}).\\
\indent For $(z,\beta)\in\mathcal{O}$, arbitrary but fixed, we then define the Stratonovich-corrected drift vector field and the diffusion vector field driven by $W^\mathbb{Q}$ as follows:
\[
V^{(0)}(z,\beta)=
\begin{pmatrix}
\dfrac{z\big(\beta^2-\sigma\sigma_\beta+2\sigma^2(\delta-r)\big)}{2\sigma^2}\\[8pt]
\dfrac{\beta\sigma_\beta+\kappa\sigma(\overline{\beta}-\beta)}{\sigma}
\end{pmatrix},
\qquad
V^{(1)}(z,\beta)=
\begin{pmatrix}
-\dfrac{\beta}{\sigma}z\\[4pt]
-\sigma_\beta
\end{pmatrix}.
\]
\noindent
\indent We denote by $DV^{(i)}$ the Jacobian matrix of the vector $V^{(i)}$, $i=0,1$. 
The Lie bracket between $V^{(0)}$ and $V^{(1)}$, denoted by $[V^{(0)},V^{(1)}]$, is such that
\[
[V^{(0)},V^{(1)}](z,\beta)
:=DV^{(1)}V^{(0)}(z,\beta)-DV^{(0)}V^{(1)}(z,\beta)
=
\begin{pmatrix}
-\dfrac{z\kappa(\overline{\beta}-\beta)}{\sigma}\\[6pt]
-\dfrac{\sigma_\beta}{\sigma}\big(\kappa\sigma-\sigma_\beta\big)
\end{pmatrix}.
\]
Since $\det(V^{(1)},[V^{(0)},V^{(1)}])$ is not necessarily non-zero, we proceed to the next bracket. Therefore, we compute
\[
[V^{(1)},[V^{(0)},V^{(1)}]](z,\beta)
= D[V^{(0)},V^{(1)}]V^{(1)}(z,\beta)-DV^{(1)}[V^{(0)},V^{(1)}]
=
\begin{pmatrix}
\dfrac{-\sigma_\beta z}{\sigma^2}\big(2\kappa\sigma-\sigma_\beta\big)\\[6pt]
0
\end{pmatrix},
\]
which implies
\[
\det(V^{(1)},[V^{(1)},[V^{(0)},V^{(1)}]])
=-\dfrac{\sigma_\beta^2 z}{\sigma^2}\big(2\kappa\sigma-\sigma_\beta\big)< 0,
\]
where the last inequality is due to \[2\kappa\sigma-\sigma_\beta>0,\] in which Assumption \ref{assumption_novikov} has been used. Hence, $V^{(1)}$ and $[V^{(1)},[V^{(0)},V^{(1)}]]$ are linearly independent and thus span $\mathcal{O}$.
Hörmander’s condition is therefore verified and we conclude that the process $(\hat{Z}_t,\hat{\beta}_t)_t$ is indeed strong Feller.   
\end{proof}

\begin{prop}\label{C^2W}
$v\in C^\infty(\mathcal{W})$ and it solves  in the classical sense 
$$\mathcal{L}v-rv+\tilde{u}'(z)+\ell=0 \quad \text{on} \quad \mathcal{W},$$
where the second-order differential operator $\mathcal{L}$ is defined as in \eqref{generator}.
\end{prop}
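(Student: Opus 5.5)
We prove Proposition~\ref{C^2W} by combining the distributional free-boundary equation with hypoellipticity. First, by Proposition~\ref{ProbRepr}, $v$ is locally Lipschitz on $\mathcal{O}$. By standard optimal stopping theory (Corollary~5 in \cite{peskir2025weak}), $v$ solves $\mathcal{L}v-rv=-(\tilde u'(z)+\ell)$ on the open set $\mathcal{W}$ in the sense of Schwartz distributions. The right-hand side $z\mapsto z^{1/\gamma}\cdots$, more precisely $-\tilde u'(z)-\ell=z^{-1/\gamma}-\ell$, is $C^\infty$ on $\mathcal{O}$.

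The operator $\mathcal{L}-r$ has smooth coefficients on $\mathcal{O}$. In the proof of Lemma~\ref{strongfeller} we verified that the vector fields $V^{(1)}$ and $[V^{(1)},[V^{(0)},V^{(1)}]]$ span $\mathbb{R}^2$ at every point of $\mathcal{O}$. Hence Hörmander's theorem applies: $\mathcal{L}-r$, written in Hörmander form $\frac12 (V^{(1)})^2+V^{(0)}-r$, is hypoelliptic on $\mathcal{O}$. Adding the zeroth-order term $-r$ does not affect this.

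Hypoellipticity means that any distribution $w$ on an open set $U$ with $(\mathcal{L}-r)w\in C^\infty(U)$ is itself in $C^\infty(U)$. Applying this with $U=\mathcal{W}$ and $w=v$ (a continuous function, hence a distribution) gives $v\in C^\infty(\mathcal{W})$. The distributional identity then holds pointwise, because all derivatives are classical and continuous. So $v$ solves $\mathcal{L}v-rv+\tilde u'(z)+\ell=0$ classically on $\mathcal{W}$.

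The main obstacle is justifying rigorously that $v$ satisfies the PDE in the distributional sense on $\mathcal{W}$. A direct route, if one prefers not to cite \cite{peskir2025weak}, is as follows.
\begin{enumerate}
\item Take any rectangle $R$ with $\overline{R}\subset\mathcal{W}$.
\item By the strong Markov property and optimality of $\tau^*$ from \eqref{deftau*}, for $(z,\beta)\in R$,
\[
v(z,\beta)=\mathbb{E}^{\mathbb{Q}}_{z,\beta}\Big[\int_0^{\tau_R}e^{-rt}(\tilde u'(\hat Z_t)+\ell)\,dt+e^{-r\tau_R}v(\hat Z_{\tau_R},\hat\beta_{\tau_R})\Big],
\]
where $\tau_R$ is the exit time from $R$. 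This holds because $\tau_R\le\tau^*$.
\item Show that this Dirichlet problem has a unique classical solution in the hypoelliptic setting, using the strong Feller property from Lemma~\ref{strongfeller} to obtain continuity up to regular boundary points.
\item Conclude that $v$ coincides on $R$ with that solution, which is smooth by the argument above.
\end{enumerate}
I would rely on the distributional statement plus Hörmander's theorem. The Dirichlet approach remains a fallback, noting that boundary regularity of $R$ requires some care with the degenerate direction, e.g.\ by choosing $R$ with non-characteristic boundary pieces.
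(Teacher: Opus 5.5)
Your argument is correct and essentially the paper's: you get the distributional equation on $\mathcal{W}$ from Corollary~5 of \cite{peskir2025weak}, then deduce $v\in C^\infty(\mathcal{W})$ from hypoellipticity of $\frac12 (V^{(1)})^2+V^{(0)}-r$ via the bracket condition in Lemma~\ref{strongfeller}, and the paper does the same by citing Corollary~7 of \cite{peskir2025weak}, which packages this hypoelliptic-regularity step. Your Dirichlet-problem fallback is therefore unnecessary.
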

\begin{proof}
Since the infinitesimal generator of the process $(\hat{Z}_t,\hat{\beta}_t)_t$ is hypoelliptic (satisfying H\"ormander's conditions; cf.\ the proof of Lemma \ref{strongfeller}), the drift and volatilities of $(\hat{Z}_t,\hat{\beta}_t)_t$ belong to $C^\infty(\mathcal{W})$, and $\tilde{u}'(z)+\ell\in C^\infty(\mathcal{W})$, Corollary 7 in \cite{peskir2025weak} implies that $v$ is not just a solution to (\ref{freeboundary1}) in the sense of Schwartz distributions but $ v\in C^\infty(\mathcal{W})$ and thus solves \eqref{freeboundary1} in the classical sense.    
\end{proof}

The next proposition states that the value function $v$ is not only locally Lipschitz continuous, but actually continuously differentiable. Its proof is based on an application of \cite{Jacka-Snell}, upon noticing that the H\"ormander's condition, verified in the proof of Lemma \ref{strongfeller}, gives existence of a smooth transition density for the process $(\hat{Z}_t,\hat{\beta}_t)_t$. 

\begin{prop}
    \label{prop:C1property}
One has $v\in C^1(\mathcal{O})$.
\end{prop}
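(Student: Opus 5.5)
The plan is to split $\mathcal{O}$ into the open continuation region $\mathcal{W}$, the interior of $\mathcal{S}$, and the free boundary $\partial\mathcal{W}=\{(z,\beta): z=z^*(\beta)\}$. On $\mathcal{W}$, Proposition \ref{C^2W} already gives $v\in C^\infty(\mathcal{W})$. On $\mathrm{int}\,\mathcal{S}$ we have $v\equiv 0$, so $v$ is smooth there. The whole issue is therefore continuity of $v_z$ and $v_\beta$ across $\partial\mathcal{W}$, that is, a smooth-fit property in both directions. Since $v$ is locally Lipschitz (Proposition \ref{ProbRepr}), its weak gradient exists a.e. We aim to show that the representations (\ref{V_z})--(\ref{V_beta}) define functions that are continuous on all of $\mathcal{O}$ and vanish on $\mathcal{S}$. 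Then $v$ is $C^1$, because a locally Lipschitz function whose weak gradient has a continuous representative is $C^1$.

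The key input is the result of \cite{Jacka-Snell}. For a strong Markov process with continuous (here smooth) transition densities, all boundary points of the stopping set are regular, in the sense that $\tau^*(z_n,\beta_n)\to 0$ in probability as $(z_n,\beta_n)\to(z_0,\beta_0)\in\partial\mathcal{W}$, or equivalently $\mathbb{Q}_{z_0,\beta_0}(\tau^*=0)=1$. Hörmander's condition, verified in Lemma \ref{strongfeller}, gives that $(\hat Z_t,\hat\beta_t)$ has a $C^\infty$ transition density (Nualart, Theorem 2.3.2). We also use the geometry from Lemma \ref{freeboundaryexistence}: from any point $(z^*(\beta),\beta)$ the stopping set contains the half-line $\{z\ge z^*(\beta)\}$, and $\hat Z$ has nondegenerate noise whenever $\beta\neq0$. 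The main step is to verify the hypotheses of Jacka's theorem and conclude that $\tau^*(z_n,\beta_n)\to0$ a.s. (or in probability) for every sequence converging to a boundary point. This step is where I expect the difficulty, because it requires regularity of every boundary point for a degenerate diffusion. For the points with $\beta=0$, where the $z$-noise vanishes, I would rely on the Hörmander bracket directions found in Lemma \ref{strongfeller}. Should the general theorem fail to apply cleanly, I would fall back on a direct argument: a positive-density/support-theorem bound showing that the process started at a boundary point enters $\{z>z^*(\beta)\}$ immediately with probability one, by Blumenthal's 0-1 law combined with the smooth density.

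Given $\tau^*_n:=\tau^*(z_n,\beta_n)\to\tau^*(z_0,\beta_0)$ (which is $0$ at boundary points, and which follows similarly for interior points of $\mathcal{W}$ by continuity of paths and openness), we pass to the limit in (\ref{V_z}) and (\ref{V_beta}). The integrands are dominated uniformly in $n$. For $v_z$, we use the $L^p$ bound $\mathbb{E}^\mathbb{Q}[\hat Z_t^{-p/\gamma}]\le z^{-p/\gamma}e^{-\frac p\gamma(\delta-r)t}$ from the proof of Proposition \ref{uppersemi}, with $z$ ranging in a compact subset of $(0,\infty)$. For $v_\beta$, we use Hölder's inequality with the Gaussian moments of $\int_0^te^{-as}dW^\mathbb{Q}_s$ and of $\int_0^t e^{-as}\hat\beta_s/\sigma^2\,ds$, which grow at most polynomially in $t$ against the exponential discount $e^{-(r+\frac1\gamma(\delta-r))t}$. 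We also use continuity in the initial data of $(\hat Z^{z,\beta},\hat\beta^\beta)$. Vitali's theorem then gives that $v_z$ and $v_\beta$ converge to the same expressions evaluated at $(z_0,\beta_0)$, which vanish at boundary points. Hence the gradient is continuous on $\mathcal{O}$, and $v\in C^1(\mathcal{O})$.
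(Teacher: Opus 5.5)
\textbf{There is a genuine gap.} Your proof rests entirely on one claim: that $\tau^*(z_n,\beta_n)\to0$ whenever $(z_n,\beta_n)\to(z_0,\beta_0)\in\partial\mathcal{W}$, i.e.\ that every free-boundary point is probabilistically regular for $\mathcal{S}$. This is the route of \cite{de2020global}, and Remark \ref{c1remark} says it is unavailable here because regularity of the free boundary is not known.

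You attribute this regularity to \cite{Jacka-Snell}, but that paper contains no such statement. Nor does regularity follow from a smooth transition density. The Kolmogorov diffusion $(\int_0^tW_s\,ds,W_t)$ satisfies H\"ormander's condition, yet every point $(0,y)$ with $y<0$ is irregular for $\{x\ge0\}$. Separately, $\mathbb{Q}_{z_0,\beta_0}(\tau^*=0)=1$ holds trivially on $\mathcal{S}$, since $\tau^*$ is defined with $s\ge0$. The relevant condition uses $\inf\{s>0:\cdot\}$, and deducing convergence of $\tau^*$ along sequences from it is itself a lemma in \cite{de2020global}.

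Your process is exactly of Kolmogorov type, and the degeneracy is not confined to $\{\beta=0\}$. One Brownian motion drives both coordinates, so the diffusion matrix has rank one everywhere. Concretely, $\phi(z,\beta):=\log z-\beta^2/(2\sigma\sigma_\beta)$ is annihilated by $V^{(1)}$. It\^o's formula then gives $d\phi(\hat Z_t,\hat\beta_t)=c(\hat\beta_t)\,dt$ with no martingale part, where $c(\beta)=\delta-r-\frac{\sigma_\beta}{2\sigma}-\frac{\beta^2}{2\sigma^2}-\frac{\kappa\beta(\overline{\beta}-\beta)}{\sigma\sigma_\beta}$. In these coordinates $\mathcal{S}=\{\phi\ge\psi(\beta)\}$ with $\psi(\beta):=\log z^*(\beta)-\beta^2/(2\sigma\sigma_\beta)$.

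Now suppose $c(\beta_0)<0$ (for instance $\beta_0=0$ when $\delta-r<\sigma_\beta/(2\sigma)$, which the standing assumptions allow) and $\psi$ has a local minimum at $\beta_0$. Started from $(z^*(\beta_0),\beta_0)$, one gets $\phi(\hat Z_t,\hat\beta_t)<\psi(\beta_0)\le\psi(\hat\beta_t)$ for all small $t>0$, so that boundary point is irregular. Nothing established about $z^*$ (only lower semicontinuity and $z^*\ge\ell^{-\gamma}$) rules this out. The horizontal half-line contained in $\mathcal{S}$ does not help, since the process crosses $\{\beta=\beta_0\}$ at $\phi<\psi(\beta_0)$. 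Your fallback fails for the same reason: Blumenthal's 0-1 law needs $\liminf_{t\downarrow0}\mathbb{Q}_{z_0,\beta_0}((\hat Z_t,\hat\beta_t)\in\mathcal{S})>0$, and a positive density at each fixed $t$ does not provide this.

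\textbf{How the paper avoids this.} It never needs boundary regularity. By the strong Markov property it writes $v=g-f$ with:
\begin{itemize}
\item $g(z,\beta)=\mathbb{E}^{\mathbb{Q}}_{z,\beta}[\int_0^\infty e^{-rt}(\tilde u'(\hat Z_t)+\ell)\,dt]$, which is $C^1$ by dominated convergence and the smooth density;
\item $f(z,\beta)=\sup_\tau\mathbb{E}^{\mathbb{Q}}_{z,\beta}[e^{-r\tau}g(\hat Z_\tau,\hat\beta_\tau)]$, the value of the Snell envelope of $e^{-rt}g(\hat Z_t,\hat\beta_t)=M_t+A_t$, where $M$ is a uniformly integrable martingale and $A$ is absolutely continuous in $t$.
\end{itemize}
Corollary 7 of \cite{Jacka-Snell} is a smooth-fit theorem for such envelopes. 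It gives $f\in C^1(\mathcal{O})$ from three inputs: absolute continuity of $A$, Lebesgue-nullity of the graph $\{z=z^*(\beta)\}$, and a transition density whose spatial derivatives are locally uniformly continuous on $\mathcal{O}\times[t_0,t_1]$ (supplied by H\"ormander's condition).

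So the right use of the reference you cite is to apply it directly to $v=g-f$. Your Vitali argument on (\ref{V_z})--(\ref{V_beta}) is sound as a passage to the limit. It cannot be closed, however, without first proving $\tau^*(z_n,\beta_n)\to0$, which would need information on $z^*$ (monotonicity or Lipschitz continuity) that is not available.
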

\begin{proof}
An application of strong Markov property allows to write
$$v(z,\beta) = g(z,\beta) - f(z,\beta), \quad (z,\beta) \in \mathcal{O},$$
where, for any $(z,\beta) \in \mathcal{O}$, we have set 
\begin{equation}
    \label{eq:def-g}
    g(z,\beta):= \mathbb{E}^\mathbb{Q}_{z,\beta}\bigg[\int_0^{\infty} e^{-rt}(\tilde{u}'(\hat{Z}_t)+\ell )dt\bigg],
\end{equation}
and
\begin{equation}
    \label{eq:def-u}
    f(z,\beta):= \sup_{\tau}\mathbb{E}^\mathbb{Q}_{z,\beta}\Big[e^{-r\tau} g(\hat{Z}_{\tau},\hat{\beta}_{\tau})\Big].
\end{equation}
Hence, the $C^1$-property of $v$ reduces to check that for $g$ and $f$.

By the proof of Lemma \ref{strongfeller}, we know that the process  $(\hat{Z}_t,\hat{\beta}_t)_t$ satisfies the H\"ormander's condition. Given that (modulo a change of measure to remove the quadratic term in $\hat{\beta}$ appearing in the drift of the dynamics for $\hat{Z}$) the coefficients of the evolution of $(\hat{Z}_t,\hat{\beta}_t)_t$ are linear and infinitely many times differentiable (see \eqref{hatZ} and \eqref{hatbeta}), it thus follows from Theorem 9-(iii) and Remark 11 in \cite{Bally} or Theorem 2.3.3 in \cite{nualart2006malliavin}, among others, that, for any $t>0$, $(\hat{Z}_t,\hat{\beta}_t)_t$ admits a transition density that is infinitely many times differentiable in its arguments.

An application of the Dominated Convergence Theorem then shows that $g\in C^1(\mathcal{O})$. It thus remains to check the continuous differentiability of $f$. With reference to the notation in \cite{Jacka-Snell}, for any $t \geq 0$, we set $\xi_t:=(\hat{Z}_t,\hat{\beta}_t)$,
$$X_t:=e^{-rt}g(\xi_t)= \mathbb{E}^\mathbb{Q}_{z,\beta}\bigg[\int_t^{\infty} e^{-rs}(\tilde{u}'(\hat{Z}_s)+\ell )ds\,\Big|\,\mathcal{F}_t^{W,\mathbb{Q}}\bigg],$$
and we can write $X_t = M_t + A_t$,
where 
$$M_t:= \mathbb{E}^\mathbb{Q}_{z,\beta}\bigg[\int_0^{\infty} e^{-rs}(\tilde{u}'(\hat{Z}_s)+\ell )ds\,\Big|\,\mathcal{F}_t^{W,\mathbb{Q}}\bigg],\quad \text{and} \quad A_t:= - \int_0^{t} e^{-rs}(\tilde{u}'(\hat{Z}_s)+\ell )ds.$$
Notice that, since $\mathbb{E}^\mathbb{Q}_{z,\beta}\bigg[\int_0^{\infty} e^{-rs}|\tilde{u}'(\hat{Z}_s)+\ell |\;ds\bigg]<\infty$ by Proposition \ref{wellposedness}, $(M_t)_t$ is a uniformly integrable $\mathbb{F}^{W,\mathbb{Q}}$-martingale and $dA_t = dA^+_t + dA^-_t$ with
$$dA^{-}_t := -e^{-rt}(\tilde{u}'(\hat{Z}_t)+\ell)^{+}dt \quad \text{and} \quad dA^{+}_t := e^{-rt}(\tilde{u}'(\hat{Z}_t)+\ell)^{-}dt,$$
which are clearly absolutely continuous with respect to the Lebesgue measure $dm_2:=dt$. Moreover, the set $\partial \mathcal{D}$ in \cite{Jacka-Snell} reads in our case $\{(z,\beta):\, z=z^*(\beta)\}$, which has zero measure with respect to $dm_1=dzd\beta$. Finally, as already noted, the process $(\xi_t)_t=(\hat{Z}_t,\hat{\beta}_t)_t$ admits a density with respect to $dm_1$ having spatial derivatives which are (locally) uniformly continuous in $\mathcal{O} \times [t_0,t_1]$, for any $0 < t_0 < t_1 < \infty$. Hence, Corollary 7 in \cite{Jacka-Snell} holds, $f\in C^1(\mathcal{O})$, and the proof is complete.
\end{proof}
An immediate consequence of Propositions \ref{C^2W}, \ref{prop:C1property}, \eqref{generator}, and the fact that $v=0$ in the interior of $\mathcal{S}$, denoted by $\mathring{\mathcal{S}}$, is the following Corollary.

\begin{corollary}\label{corregularity}
One has $v\in C^1(\mathcal{O})\cap C^\infty(\mathcal{W}\cup\mathring{\mathcal{S}})$. Furthermore, $\frac{1}{2}\frac{\beta^2}{\sigma^2} z^2v_{zz}+\frac{1}{2}\sigma_\beta^2 v_{\beta\beta}
+\frac{ \sigma_\beta}{\sigma}\,\beta z v_{z\beta}$ admits a continuous extension to $\overline{\mathcal{W}}$.
\end{corollary}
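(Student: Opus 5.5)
The plan is to combine the regularity already established in Propositions \ref{C^2W} and \ref{prop:C1property} with the trivial smoothness of $v$ on the interior of the stopping region, and then read off the second-order combination directly from the PDE.

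For the first claim, Proposition \ref{prop:C1property} gives $v\in C^1(\mathcal{O})$ and Proposition \ref{C^2W} gives $v\in C^\infty(\mathcal{W})$. On $\mathring{\mathcal{S}}$ we have $v\equiv 0$ by \eqref{regions}, so $v$ is $C^\infty$ there. Since $\mathcal{W}$ and $\mathring{\mathcal{S}}$ are disjoint open sets and smoothness is a local property, $v\in C^\infty(\mathcal{W}\cup\mathring{\mathcal{S}})$.

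For the second claim, write the generator \eqref{generator} as
\[
\mathcal{L}v=\Big(\tfrac{1}{2}\tfrac{\beta^2}{\sigma^2}z^2v_{zz}+\tfrac{1}{2}\sigma_\beta^2v_{\beta\beta}+\tfrac{\sigma_\beta}{\sigma}\beta z v_{z\beta}\Big)+\Big(\tfrac{\beta^2}{\sigma^2}+\delta-r\Big)zv_z+\Big(\kappa(\overline{\beta}-\beta)+\tfrac{\beta}{\sigma}\sigma_\beta\Big)v_\beta,
\]
so that the second-order part, call it $\mathcal{L}_2v$, is exactly the expression in the statement. On $\mathcal{W}$, Proposition \ref{C^2W} lets us solve for it:
\[
\mathcal{L}_2v=rv+z^{-1/\gamma}-\ell-\Big(\tfrac{\beta^2}{\sigma^2}+\delta-r\Big)zv_z-\Big(\kappa(\overline{\beta}-\beta)+\tfrac{\beta\sigma_\beta}{\sigma}\Big)v_\beta .
\]
Every term on the right-hand side is a continuous function on all of $\mathcal{O}$: $v,v_z,v_\beta$ are continuous by Proposition \ref{prop:C1property}, and the coefficients and $z^{-1/\gamma}$ are continuous on $\mathcal{O}$. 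This right-hand side therefore defines a continuous function on $\mathcal{O}$, in particular on $\overline{\mathcal{W}}$ (closure taken in $\mathcal{O}$), and it coincides with $\mathcal{L}_2v$ on $\mathcal{W}$. That is the desired continuous extension. On $\partial\mathcal{W}$ we have $v=v_z=v_\beta=0$, since $v$ is $C^1$ and attains its maximum $0$ there (Proposition \ref{bounds}), so the extension equals $z^{-1/\gamma}-\ell$ on the boundary.

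The only delicate point is that the individual second derivatives need not extend continuously; only the degenerate combination $\mathcal{L}_2v$ does. This is why the extension has to come from the equation rather than from bounds on $v_{zz}$, $v_{\beta\beta}$ and $v_{z\beta}$ separately. Beyond that the argument is bookkeeping.
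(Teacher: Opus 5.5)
Your proof is correct and follows the paper's own route. The paper states the corollary as an immediate consequence of Propositions \ref{C^2W} and \ref{prop:C1property}, the form of the generator \eqref{generator}, and $v=0$ on $\mathring{\mathcal{S}}$; you have simply written out that step, solving the PDE on $\mathcal{W}$ for the second-order part and observing that everything left over is continuous on $\mathcal{O}$, with the closure of $\mathcal{W}$ taken in $\mathcal{O}$.
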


\begin{remark}\label{c1remark}
The proof of Proposition \ref{prop:C1property} employs the probabilistic approach developed in \cite{Jacka-Snell} (and recently used in \cite{ferrari2018optimal} and \cite{LambertonTerenzi2}) to establish continuous differentiability of the value function $v$ in the optimal stopping problem in the presence of a smooth transition density for the underlying state process.

An alternative approach to $C^1$-regularity in optimal stopping was developed in \cite{de2020global}, where continuous differentiability of the value function is linked to the probabilistic regularity of points on the stopping boundary. In our setting, however, following the approach of \cite{de2020global} is challenging, as it is not clear how to prove the probabilistic regularity of the free boundary. Indeed, such a property is typically established in the literature when one can show either monotonicity or (local) Lipschitz continuity of the free boundary. However, proving these properties is highly non-trivial in our framework because the stochastic factor $(\hat{\beta}_t)_t$ acts explicitly as stochastic volatility for the state process $(\hat{Z}_t)_t$ (cf.\ (\ref{hatZ})). This intrinsic stochastic volatility makes it difficult to exploit (\ref{V_z}) and (\ref{V_beta}) to establish monotonicity of $z^*$ or to derive the uniform bounds required to prove that the free boundary is Lipschitz continuous (cf.\ \cite{MR3904414}).
\end{remark}

To conclude this section, we summarize the results obtained so far in the following theorem.

\begin{theorem}
Recall the optimal stopping problem  (cf.\ (\ref{OST_new}))
\begin{equation*}v(z,\beta):=\inf_\tau\mathbb{E}^\mathbb{Q}_{z,\beta}\bigg[\int_0^\tau e^{-rt}(\tilde{u}'(\hat{Z}_t)+\ell )dt\bigg],\end{equation*} where the procesess $(\hat{Z}_t,\hat{\beta_t})_t$ are given by (\ref{hatZ}) and (\ref{hatbeta}).\\
\indent There exists a lower-semicontinuous free boundary
\[
z^*:\mathbb{R}\rightarrow [\ell^{-\gamma},\infty]
\]
such that the stopping and continuation regions (cf.\ (\ref{regions})) are given by
\[
\mathcal{S}=\{(z,\beta)\in\mathcal{O}: z\geq z^*(\beta)\},\quad \text{and}\quad \mathcal{W}=\{(z,\beta)\in\mathcal{O}: z< z^*(\beta)\},
\] and the optimal stopping time (cf.\ (\ref{deftau*})) is given by
\begin{equation}\label{deftau*new}\tau^*(z,\beta) = \inf \{ s \ge 0 : \hat{Z}_s\geq z^*(\hat{\beta}_s) \},\quad \mathbb{Q}_{z,\beta}\text{-a.s.}\end{equation}
Additionally, $v\in C^1(\mathcal{O})\cap C^\infty(\mathcal{W}\cup\mathring{\mathcal{S}})$ and the couple $(v,z^*)$ satisfies (in the classical sense) the free boundary problem:

\[
\begin{cases}
  \mathcal{L}v - rv + \tilde{u}'(z) + \ell = 0, & \text{on } \mathcal{W}, \\
  v = 0, & \text{on } \mathcal{S}, \\
  v_z(z^*(\beta),\beta)=0=v_{\beta}(z^*(\beta),\beta), & \beta \in \mathbb{R},
\end{cases}
\]
where the infinitesimal generator $\mathcal{L}$ is given by (\ref{generator}).
\end{theorem}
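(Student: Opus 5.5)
The theorem is essentially a summary, so the plan is to assemble the earlier results in order and verify the two new items: the form of $\tau^*$ in \eqref{deftau*new} and the smooth-fit conditions $v_z=v_\beta=0$ on the boundary.

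\emph{Free boundary.} Lemma \ref{freeboundaryexistence} gives the free boundary $z^*$, defined by \eqref{defz*}. It is lower-semicontinuous, satisfies $z^*\ge \ell^{-\gamma}$, and yields $\mathcal{S}=\{z\ge z^*(\beta)\}$ and $\mathcal{W}=\{z<z^*(\beta)\}$. Proposition \ref{Snonempty} shows that $\mathcal{S}\neq\emptyset$; in particular $z^*$ is not identically $+\infty$, although the codomain $[\ell^{-\gamma},\infty]$ allows infinite values.

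\emph{Optimal stopping time.} The stopping time \eqref{deftau*} is optimal by Corollary 2.9 in Chapter 1 of \cite{peskir2006optimal}, since $v$ is continuous (Proposition \ref{uppersemi}) and $\mathcal{S}$ is closed. Substituting the description of $\mathcal{S}$ immediately gives \eqref{deftau*new}, because $(\hat Z_s,\hat\beta_s)\in\mathcal{S}$ if and only if $\hat Z_s\ge z^*(\hat\beta_s)$. This holds $\mathbb{Q}_{z,\beta}$-a.s., using $\hat Z_s>0$ and the fact that $(\hat Z,\hat\beta)$ stays in $\mathcal{O}$.

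\emph{Regularity and PDE.} Corollary \ref{corregularity} (built on Propositions \ref{C^2W} and \ref{prop:C1property}) gives $v\in C^1(\mathcal{O})\cap C^\infty(\mathcal{W}\cup\mathring{\mathcal{S}})$. Proposition \ref{C^2W} gives the classical equation $\mathcal{L}v-rv+\tilde u'(z)+\ell=0$ on $\mathcal{W}$, and $v=0$ on $\mathcal{S}$ holds by the definition \eqref{regions}.

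\emph{Smooth fit.} Fix $\beta$ with $z^*(\beta)<\infty$; this is the only case in which the condition is meaningful. The point $(z^*(\beta),\beta)$ lies in $\mathcal{S}$ because $\mathcal{S}$ is closed. Since $v\le 0$ everywhere (Proposition \ref{bounds}) and $v=0$ at this point, $v$ attains a global maximum there. Because $v\in C^1(\mathcal{O})$ and $\mathcal{O}$ is open, the gradient vanishes at an interior maximum, so
\[
v_z(z^*(\beta),\beta)=0=v_\beta(z^*(\beta),\beta).
\]
Equivalently, one can argue by continuity of the gradient: $\nabla v\equiv0$ on $\mathring{\mathcal{S}}$, and the maximum argument covers boundary points that are not limits of interior points. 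The maximum argument is the cleaner one, since it needs no information about the geometry of $\mathcal{S}$.

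\emph{Main obstacle.} The genuine difficulty is the $C^1$ property across the boundary, and it is already settled by Proposition \ref{prop:C1property} through \cite{Jacka-Snell}. What remains to check is that $\partial\mathcal{W}=\{z=z^*(\beta)\}$ has zero Lebesgue measure, which that proposition needs. This follows because $\partial\mathcal{W}$ is the graph of a Borel (lower-semicontinuous) function of $\beta$, so Fubini's theorem gives its $dz\,d\beta$-measure as zero. With that in place, the theorem follows by collecting the results above.
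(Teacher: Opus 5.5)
Your proposal is correct and follows the paper. The paper proves this summary theorem simply by collecting Lemma~\ref{freeboundaryexistence}, Proposition~\ref{uppersemi}, the smoothness and PDE results, Proposition~\ref{prop:C1property} and Corollary~\ref{corregularity}, and your global-maximum argument ($v\le 0$ by Proposition~\ref{bounds}, $v(z^*(\beta),\beta)=0$, $v\in C^1(\mathcal{O})$) is a clean way to make explicit the smooth-fit conditions that the paper leaves implicit.

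One caveat concerns your last paragraph, which repeats the identification made in the paper's proof of Proposition~\ref{prop:C1property}. For a merely lower-semicontinuous $z^*$ one only has $\partial\mathcal{W}=\{(z,\beta)\in\mathcal{O}: z^*(\beta)\le z\le \limsup_{\beta'\to\beta}z^*(\beta')\}$. This set can contain vertical segments at discontinuities of $z^*$, so it equals the Lebesgue-null graph of $z^*$ only under extra information such as continuity of $z^*$. This does not affect your assembly of the theorem, which uses Proposition~\ref{prop:C1property} as already established.
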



\section{Back to the Primal Problem}\label{section5}
In the previous section, we characterized the solution to the auxiliary optimal stopping problem given in (\ref{OST_new}), and hence also for the problem (\ref{OSTunderP}) due to Proposition \ref{measurechange}. The following result establishes a connection between the singular control problem (\ref{SCP}) and the auxiliary optimal stopping problem (\ref{OSTunderP}) using probabilistic arguments as in \cite{baldursson1996irreversible}, \cite{callegaro2020optimal} and \cite{ferrari2018optimal}.
\begin{theorem}\label{establish}
It holds that
\begin{equation}\label{tildeVhatV}
\tilde{V}(z,\beta) = \int_0^z v(y,\beta)\,dy,\quad (z,\beta)\in \mathcal{O},
\end{equation}
where $\tilde{V}$ and $v$ are given by (\ref{tildeV}) and (\ref{OSTunderP}) (equivalently, (\ref{OSTunderQ})). Furthermore, the optimal singular control for (\ref{tildeV}) is given by \begin{equation}\label{D^*def} D_t^*=\exp(-\xi_t^*),\quad t>0,\quad D_{0^-}^*=1,
\end{equation}
where, for $(z,\beta)\in\mathcal{O}$,
\[
\xi_t^*:=\sup\left\{y\ge 0 \mid \tau^*(ze^{-y}, \beta) < t\right\},\quad t>0,\quad \xi_{0^-}^*=0,
\] and $\tau^*(z,\beta)$ is the optimal stopping time for (\ref{OSTunderP}) given in (\ref{deftau*new}).
\end{theorem}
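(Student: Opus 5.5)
The plan is to follow the classical Baldursson--Karatzas argument relating monotone follower problems to optimal stopping, as in \cite{baldursson1996irreversible}, \cite{callegaro2020optimal} and \cite{ferrari2018optimal}. Its key ingredient is the multiplicative structure of the dual state. For $D\in\mathcal{D}$ with $D_t>0$, write $D_t=e^{-\xi_t}$, where $\xi$ is nondecreasing, càdlàg, adapted and has $\xi_{0^-}=0$; the case in which $D$ hits zero is handled as the limit $\xi=+\infty$. From (\ref{Y}) and (\ref{Z^1}) we have $Z^D_t=e^{-\xi_t}Z^1_t$. Moreover, writing $Z^{1,y}$ for the uncontrolled process started at $y$, linearity in the initial condition gives $Z^{1,y}_t=\frac{y}{z}Z^{1,z}_t$, and $e^{-\delta t}Z^{1,y}_t=y\,e^{-rt}M_t$ by (\ref{Y}) and (\ref{martingale}). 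Throughout, the initial factor value $\beta$ is fixed, and only the $z$-component is rescaled.

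For a fixed control $D$, I would define the family of ``inverse'' times $\tau^\xi(u):=\inf\{t\ge0:\xi_t>u\}$ for $u\ge0$. These are $\mathbb{F}^W$-stopping times, since $\xi$ is adapted and right-continuous and the filtration is complete. They are nondecreasing in $u$, and they satisfy $\{\xi_t>u\}=\{\tau^\xi(u)<t\}$ up to the usual boundary conventions. Since $\gamma>1$, we have $\tilde u(y)=\frac{\gamma}{1-\gamma}y^{(\gamma-1)/\gamma}\to0$ as $y\downarrow0$. Pathwise, the fundamental theorem of calculus then gives
\begin{equation*}
\tilde u(Z^D_t)+\ell Z^D_t=\int_{\xi_t}^{\infty}(\tilde u'+\ell)\big(ze^{-u}Z^1_t/z\big)\,ze^{-u}\frac{Z^1_t}{z}\,du=\int_0^\infty \mathbf 1_{\{\tau^\xi(u)<t\}}\,(\tilde u'(Z^{1,ze^{-u}}_t)+\ell)\,Z^{1,ze^{-u}}_t\,du .
\end{equation*}
I would multiply by $e^{-\delta t}$, take expectations, apply Fubini in $(t,u,\omega)$, and change variables $y=ze^{-u}$. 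Because $e^{-\delta t}Z^{1,y}_t=y\,e^{-rt}M_t$, the factor $y$ cancels against $du=-dy/y$. The resulting identity is
\begin{equation*}
\mathbb{E}_{z,\beta}\Big[\int_0^\infty e^{-\delta t}\big(\tilde u(Z^D_t)+\ell Z^D_t\big)dt\Big]=\int_0^z \mathbb{E}_{y,\beta}\Big[\int_0^{\tau^\xi(\ln(z/y))}e^{-rt}M_t\big(\tilde u'(Z^{1}_t)+\ell\big)dt\Big]dy .
\end{equation*}
Here I have also used that the total cost over $[0,\infty)$ started at $y$ minus the cost over $[\tau,\infty)$ equals the cost over $[0,\tau)$; equivalently, I integrate the indicator of $\{t\ge\tau\}$ directly. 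Since each $\tau^\xi(\ln(z/y))$ is an admissible stopping time, each inner expectation is bounded below by $v(y,\beta)$ from (\ref{OSTunderP}). This yields $\tilde V(z,\beta)\ge\int_0^z v(y,\beta)\,dy$.

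For the reverse inequality, I would take $\xi^*$ as in the statement. The key observation is that $y\mapsto\tau^*(ze^{-y},\beta)$ is nondecreasing. Indeed, $Z^{1,ze^{-y}}=e^{-y}Z^{1,z}$ decreases in $y$ while $\beta$ is unaffected, and by Lemma \ref{freeboundaryexistence} and (\ref{deftau*new}) a smaller $z$-path hits $\{z\ge z^*(\beta)\}$ later. Consequently $\xi^*$ is nondecreasing, and it is right-continuous by its definition as a supremum over the strict inequality $\tau^*<t$. It is adapted because $\{\xi^*_t>y\}=\{\tau^*(ze^{-y},\beta)<t\}\in\mathcal F^W_t$. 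Hence $D^*\in\mathcal D$. Moreover, the inverse times of $\xi^*$ coincide, up to a Lebesgue-null set of $u$ (arising from right-continuity versus countably many jumps), with $\tau^*(ze^{-u},\beta)$. Plugging them into the identity therefore gives equality, which proves both (\ref{tildeVhatV}) and the optimality of $D^*$.

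The main obstacle is integrability, namely justifying Fubini and the pathwise representation when $\xi$ may be infinite or the cost infinite. I would first note that for controls with infinite cost the inequality is trivial, since $\int_0^z v\,dy$ is finite. For the integrand, I would split $\tilde u'+\ell$ into its positive and negative parts. The negative part $e^{-rt}M_t(Z^{1,y}_t)^{1/\gamma}$ is handled through the measure change of Proposition \ref{measurechange} together with (\ref{E(Z)ineq}) and Proposition \ref{bounds}: these give $\int_0^z\mathbb E\int_0^\infty e^{-rt}M_t(Z^{1,y}_t)^{-1/\gamma}dt\,dy\le C\int_0^z y^{-1/\gamma}dy<\infty$, which is finite since $\gamma>1$. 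The positive part $\ell$ contributes at most $z\ell/r$. This gives absolute integrability and allows Tonelli/Fubini. A secondary technical point is the case $D_t=0$, that is $\xi=\infty$. I would treat it by monotone approximation $\xi\wedge n$, passing to the limit via the same bounds.
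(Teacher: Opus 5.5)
Your argument is the paper's proof written in the coordinate $u=\ln(z/y)$. The paper integrates $v(y,\beta)$ over $y\in(0,z]$ and plugs in the stopping times $\tau^D(y/z)=\inf\{t\ge0: D_t<y/z\}$, which are exactly your $\tau^\xi(\ln(z/y))$; it then uses $t<\tau^D(y/z)\iff y\le zD_t$ together with Fubini and $e^{-rt}M_t z/Z^1_t=e^{-\delta t}$, and gets equality for $D^*$ because its inverse times are the $\tau^*(y,\beta)$. Your integrability bookkeeping and your a.e.-in-$u$ identification of the inverse times are, if anything, more careful than the paper's.

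Two small fixes are needed. First, in your pathwise display the indicator should be $\mathbf{1}_{\{t<\tau^\xi(u)\}}$ (equivalently $u>\xi_t$, up to a null set), so the ``total minus tail'' step is not needed. Second, a supremum over the strict inequality $\tau^*(ze^{-y},\beta)<t$ makes $\xi^*$ left-continuous, not right-continuous. You should instead use $\sup\{y\ge0:\tau^*(ze^{-y},\beta)\le t\}$, which by lower semicontinuity of $z^*$ equals $\sup_{s\le t}[\ln(Z^1_s/z^*(\beta_s))]^+$, is c\`adl\`ag, and has the same inverse times and the same pathwise cost.
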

\begin{proof}
Let us define the candidate value function $U(z,\beta) := \int_0^z v(y,\beta)\,dy$ with $v$ as in (\ref{OSTunderP}). We need to show $U(z,\beta)=\tilde{V}(z,\beta)$ for all $(z,\beta)\in\mathcal{O}$.\\
\\
\indent \textbf{Step 1.} In this step, we establish $U(z,\beta)\leq\tilde{V}(z,\beta)$ for all $(z,\beta)\in\mathcal{O}$. To that end, let $D \in \mathcal{D}$ be an arbitrary admissible singular control and we define its left-continuous inverse process $\tau^D$ as
\begin{equation}\label{tauD}
    \tau^D(\alpha) := \inf \{ t \ge 0 \mid D_t < \alpha \},\quad\alpha \in (0, 1].
\end{equation}
The process $\tau^D:=\{\tau^D(\alpha) \mid \alpha\in(0,1]\}$ has nonincreasing, left-continuous sample paths and hence it admits right-limits $\tau^D_{+}(\alpha):=\inf\{t\geq 0\mid D_t\leq \alpha\}$. The set of points $\alpha\in(0,1]$ at which $\tau^D(\alpha)(\omega)\neq\tau^D_+(\alpha)(\omega)$ is countable for a.e.\ $\omega\in\Omega$. Since $(D_t)_t$ is right-continuous and $\tau^D(\alpha)$ is the first entry time of an open set, it is an $\mathcal{F}^W_{t+}$-stopping time for any given and fixed $\alpha\in(0,1]$. However, $(\mathcal{F}^W_t)_t$ is right-continuous, hence $\tau^D(\alpha)$ is an $\mathbb{F}^W$-stopping time. For a fixed $z>0$ and $y \in (0, z]$, we consider $\alpha = \frac{y}{z}$ in the following. Upon using sub-optimality of $\tau^D(\frac{y}{z})$ for (\ref{OSTunderP}), we have
\begin{equation}\label{eq:v_inequality}
    v(y,\beta) \le \mathbb{E}_{z,\beta}\bigg[\int_0^{\tau^D(\frac{y}{z})} e^{-rt} M_t \big(\tilde{u}'(\tfrac{y}{z} Z^1_t) + \ell\big)\,dt\bigg].
\end{equation}
Integrating \eqref{eq:v_inequality} with respect to $y$ over $[0,z]$ then yields
\begin{equation*}
    U(z,\beta) = \int_0^z v(y,\beta)\,dy \le \int_0^z \mathbb{E}_{z,\beta}\bigg[\int_0^{\tau^D(\frac{y}{z})} e^{-rt} M_t \big(\tilde{u}'(\tfrac{y}{z} Z^1_t) + \ell\big)\,dt\bigg]\,dy.
\end{equation*}
By Fubini's Theorem, the previous inequality is equivalent to
\begin{equation*}
    U(z,\beta) \le \mathbb{E}_{z,\beta}\bigg[ \int_0^z\bigg(\int_0^\infty \mathbf{1}_{\{t < \tau^D(\frac{y}{z})\}} e^{-rt} M_t \big(\tilde{u}'(\tfrac{y}{z} Z^1_t) + \ell\big)\,dt\bigg) \,dy \bigg].
\end{equation*}
Since
\begin{equation*}
    t < \tau^D(\frac{y}{z}) \iff D_t \ge \frac{y}{z} \iff y \le z D_t,
\end{equation*}
we then have
\begin{equation}\label{eq:U_intermediate}
    U(z,\beta) \le \mathbb{E}_{z,\beta}\bigg[ \int_0^\infty e^{-rt} M_t \bigg( \int_0^{z D_t} \big(\tilde{u}'(\tfrac{y}{z} Z^1_t) + \ell\big)\,dy \bigg) dt \bigg].
\end{equation}
For the inner integral on the right-hand side of (\ref{eq:U_intermediate}), upon setting $w = \frac{y}{z} Z^1_t$ and noticing that, by (\ref{Y}), $Z^{D}_t =  Z^1_tD_t$ with $Z_t^1=e^{\delta t}z\mathcal{H}_t$, we have 
\begin{equation*}
    \int_0^{z D_t} \big(\tilde{u}'(\tfrac{y}{z} Z^1_t) + \ell\big)\,dy = \frac{z}{Z^1_t} \int_0^{Z^{D}_t} (\tilde{u}'(w) + \ell)\,dw = \frac{z}{Z^1_t} \big(\tilde{u}(Z^{D}_t) + \ell Z^{D}_t \big).
\end{equation*}
Substituting this back into \eqref{eq:U_intermediate} and using the relation $e^{-rt} M_t \frac{z}{Z^1_t} = e^{-\delta t}$ (cf.\ (\ref{martingale}) and (\ref{Y})), we obtain
\begin{equation*}
    U(z,\beta) \le \mathbb{E}_{z,\beta}\bigg[ \int_0^\infty e^{-\delta t} \big(\tilde{u}(Z^{D}_t) + \ell Z^{D}_t \big)\,dt \bigg].
\end{equation*}
Since $D \in \mathcal{D}$ was arbitrary, we conclude $U(z,\beta) \le \tilde{V}(z,\beta)$ for all $(z,\beta)\in\mathcal{O}$.\\
\\
\textbf{Step 2.} Let $\tau^*(z,\beta)$ denote the optimal stopping time for the problem \eqref{OSTunderP} with initial data $(z,\beta)$ which is given by (\ref{deftau*new}). Since $z\mapsto v(z,\beta)$ is nondecreasing (cf. Proposition \ref{monotonicity}), $z \mapsto \tau^*(z,\beta)$ is nonincreasing, and therefore $\eta\mapsto \tau^*(ze^{-\eta},\beta)$ is nondecreasing on $(0,\infty)$, for all $(z,\beta)\in\mathcal{O}$. Hence, we can define the process $\xi^*$ as the generalized inverse
\begin{equation}\label{xi^*def}
    \xi^*_t := \sup \left\{ \eta \ge 0 \mid \tau^*(z e^{-\eta}, \beta) < t \right\},\quad t>0,\quad \xi_{0^-}^*=0.
\end{equation} We now take $D^*_t=\exp(-\xi^*_t)$, with $(\xi_t^*)_t$ as in (\ref{xi^*def}) (cf.\ (\ref{D^*def}) as well). Optimality of $\tau^*(y,\beta)$ for the optimal stopping problem (\ref{OSTunderP}) then implies
\begin{equation*}
    v(y,\beta) = \mathbb{E}_{z,\beta}\bigg[\int_0^{\tau^*(y,\beta)} e^{-rt} M_t \big(\tilde{u}'(\tfrac{y}{z} Z^1_t) + \ell\big)\,dt\bigg].
\end{equation*}
Since we have from (\ref{xi^*def}) 
\begin{align*} D_t^* < \frac{y}{z} \iff \xi_t^*> \ln(\frac{z}{y})\iff \tau^*(ze^{-\ln(\frac{z}{y})},\beta)<t \iff \tau^*(y, \beta) < t, \end{align*}
recalling (\ref{tauD}) for $D=D^*$, we obtain
$\tau^{D^*}(\frac{y}{z}) = \tau^*(y,\beta)$ for all $y\in(0,z]$. Hence, integrating over $[0,z]$ yields
\begin{equation*}
    U(z,\beta) = \int_0^z \mathbb{E}_{z,\beta}\bigg[\int_0^{\tau^{D^*}(\frac{y}{z})} e^{-rt} M_t \big(\tilde{u}'(\tfrac{y}{z} Z^1_t) + \ell\big)\,dt\bigg]\,dy.
\end{equation*}
As before, an application of Fubini's Theorem leads to
\begin{equation*}
    U(z,\beta) = \mathbb{E}_{z,\beta}\bigg[ \int_0^z \bigg( \int_0^\infty \mathbf{1}_{\{t < \tau^{D^*}(\frac{y}{z})\}} e^{-rt} M_t \big(\tilde{u}'(\tfrac{y}{z} Z^1_t) + \ell\big)\,dt \bigg) \, dy \bigg].
\end{equation*}
Observing that $t < \tau^{D^*}(\frac{y}{z})$ is equivalent to $y \le z D^*_t$ then yields
\begin{equation}\label{UD*}
    U(z,\beta) = \mathbb{E}_{z,\beta}\bigg[ \int_0^\infty e^{-rt} M_t \bigg( \int_0^{z D^*_t} \big(\tilde{u}'(\tfrac{y}{z} Z^1_t) + \ell\big)\,dy \bigg) \, dt \bigg].
\end{equation}
For the inner integral in (\ref{UD*}) we use again the change of variables $w := \frac{y}{z} Z^1_t$, which implies 
\begin{align}\label{changeofvariables}
    \int_0^{z D^*_t} \big(\tilde{u}'(\tfrac{y}{z} Z^1_t) + \ell\big)\,dy
    &= \frac{z}{Z^1_t} \int_0^{Z^{D^*}_t} \big(\tilde{u}'(w) + \ell\big) \, dw \notag\\
    &= \frac{z}{Z^1_t} \big(\tilde{u}(Z^{D^*}_t) + \ell Z^{D^*}_t \big),
\end{align}
and, upon using $e^{-rt} M_t \frac{z}{Z^1_t} = e^{-\delta t}$ (cf.\ (\ref{martingale}) and (\ref
Y)), we obtain thanks to (\ref{UD*}) and (\ref{changeofvariables}) that
\begin{equation*}
    U(z,\beta) = \mathbb{E}_{z,\beta}\bigg[ \int_0^\infty e^{-\delta t} \big(\tilde{u}(Z^{D^*}_t) + \ell Z^{D^*}_t \big) \, dt \bigg] \geq \tilde{V}(z,\beta),
\end{equation*} where we used that $D^*\in\mathcal{D}$ since $(\xi_t^*)_t$ is nondecreasing, càdlàg, and $\mathbb{F}^W$-adapted.\\
\\
\textbf{Step 3.} Combining Step 1 and Step 2, we finally have $\tilde{V}(z,\beta)=U(z,\beta)=\int_0^z v(y,\beta) dy$ for all $(z,\beta)\in\mathcal{O}$ and that $(D^*_t)_t$ as in (\ref{D^*def}) is the optimal singular control.
\end{proof}
A direct consequence of Theorem \ref{establish} and Corollary \ref{corregularity} is the following Corollary.

\begin{corollary}\label{cor}
For $\tilde{V}$ as in (\ref{tildeV}) and $v$ as in (\ref{OSTunderP}) (equivalently, (\ref{OSTunderQ})), we have \[\tilde{V}_z(z,\beta)=v(z,\beta),\quad(z,\beta)\in\mathcal{O}.\] Consequently, it holds that $\tilde{V} \in C^1(\mathcal{O})$ with $\tilde{V}_{z} \in C^1(\mathcal{O}) \cap C^{\infty}(\mathcal{W} \cup \mathring{\mathcal{S}})$.   
\end{corollary}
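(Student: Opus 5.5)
The plan is to derive the identity $\tilde V_z=v$ directly from the representation \eqref{tildeVhatV} of Theorem \ref{establish}, and then read off the regularity of $\tilde V$ from Corollary \ref{corregularity}. The idea is that $\tilde V$ is, for each fixed $\beta$, the primitive in $z$ of the continuous function $y\mapsto v(y,\beta)$. Continuity of $v$ on $\mathcal{O}$ is given by Proposition \ref{uppersemi}.

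First, fix $\beta\in\mathbb{R}$. By Theorem \ref{establish} we have $\tilde V(z,\beta)=\int_0^z v(y,\beta)\,dy$. The integral is well defined, since $v(\cdot,\beta)\le 0$ is continuous on $(0,\infty)$ and, by Proposition \ref{bounds} together with \eqref{E(Z)ineq}, satisfies
\[
|v(y,\beta)|\le \frac{y^{-1/\gamma}}{r+\frac{1}{\gamma}(\delta-r)},
\]
which is integrable near $0$ because $\gamma>1$. The fundamental theorem of calculus then gives $\tilde V_z(z,\beta)=v(z,\beta)$ for every $z>0$.

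Second, we show $\tilde V\in C^1(\mathcal{O})$. We already have $\tilde V_z=v$, and $v$ is continuous on $\mathcal{O}$. For the $\beta$-derivative, we differentiate under the integral sign to get
\[
\tilde V_\beta(z,\beta)=\int_0^z v_\beta(y,\beta)\,dy .
\]
This is the main obstacle, because it requires a locally uniform integrable bound on $v_\beta$ near $y=0$. We take the bound from the probabilistic representation \eqref{V_beta}. Applying Cauchy–Schwarz to the inner factor $\int_0^t e^{-as}\hat\beta_s\sigma^{-2}ds-\sigma^{-1}\int_0^t e^{-as}dW^{\mathbb{Q}}_s$, which has Gaussian-type moments growing at most polynomially in $t$ and locally uniformly in $\beta$, and combining this with the $L^p$ bound on $\hat Z_t^{-p/\gamma}$ from the proof of Proposition \ref{uppersemi}, we obtain
\[
|v_\beta(y,\beta)|\le C\,y^{-1/\gamma},
\]
with $C$ locally uniform in $\beta$. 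This bound is integrable on $(0,z]$. Since $v_\beta$ is continuous by Proposition \ref{prop:C1property}, dominated convergence then yields both the differentiation under the integral and the continuity of $\tilde V_\beta$. Hence $\tilde V\in C^1(\mathcal{O})$.

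Finally, since $\tilde V_z=v$ on $\mathcal{O}$, Corollary \ref{corregularity} immediately gives $\tilde V_z\in C^1(\mathcal{O})\cap C^\infty(\mathcal{W}\cup\mathring{\mathcal{S}})$, which completes the proof.
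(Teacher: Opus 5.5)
Your proof is correct and follows the paper's route: $\tilde V_z=v$ is the fundamental theorem of calculus applied to \eqref{tildeVhatV}, and the regularity of $\tilde V_z$ is read off from Corollary \ref{corregularity}, while your dominated-convergence argument for $\tilde V_\beta=\int_0^z v_\beta(y,\beta)\,dy$ spells out a step the paper leaves implicit. One small correction: obtain the bound $|v_\beta(y,\beta)|\le C y^{-1/\gamma}$ via H\"older with $1<p<\gamma$, not Cauchy--Schwarz, since the argument behind \eqref{E(Z)ineq} does not control $\mathbb{E}^{\mathbb{Q}}[\hat Z_t^{-2/\gamma}]$ when $\gamma<2$; this is exactly the Lipschitz constant $c(y,\beta)=\frac{M_q(\beta)}{\phi\gamma}y^{-1/\gamma}$ from Step 2 of the proof of Proposition \ref{ProbRepr}.
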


The next proposition further characterizes the optimal singular control of problem (\ref{SCP}).
\begin{prop}
Recall (\ref{D^*def}). Then $D^*$ admits the representation
\begin{equation}\label{D^*}
D_t^* = \inf_{0 \le s \le t} \left( \frac{z^*(\beta_s)}{Z_s^{1}} \wedge 1 \right),\quad t\geq 0, \quad D_{0^-}^*=1.
\end{equation}
\end{prop}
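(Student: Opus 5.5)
The plan is to unwind the definition of $\xi^*$ in (\ref{D^*def}), using the explicit form (\ref{deftau*new}) of $\tau^*$ and the multiplicative structure of the uncontrolled dual state. A key observation is that, started from $(ze^{-y},\beta)$, the process $Z^1$ in (\ref{Z^1}) is linear in its initial datum. Hence $Z^{1,ze^{-y},\beta}_s = e^{-y}Z^{1,z,\beta}_s = e^{-y}Z^1_s$, while the $\beta$-component is unaffected. We work under $\mathbb{P}$ with $(Z^1,\beta)$. The stopping region $\mathcal{S}$ is defined through $v$, and by Proposition \ref{measurechange} the same $v$ and hence the same $z^*$ serve for (\ref{OSTunderP}). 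So $\tau^*(ze^{-y},\beta)=\inf\{s\ge0: e^{-y}Z^1_s\ge z^*(\beta_s)\}$ pathwise. Here I will check that the identification of (\ref{OSTunderP}) with (\ref{OSTunderQ}) preserves the hitting-time form: the change of measure maps $(Z^1,\beta)$ to $(\hat Z,\hat\beta)$ in law, and $\mathcal{S}$ is a fixed set.

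Next I rewrite the event $\{\tau^*(ze^{-y},\beta)<t\}$. Lower semicontinuity of $z^*$ (Lemma \ref{freeboundaryexistence}) makes $\mathcal{S}$ closed, and $(Z^1,\beta)$ has continuous paths. Therefore $\{\tau^*(ze^{-y},\beta)<t\}$ coincides, up to the boundary case $s=t$, with $\{\exists s\in[0,t]: e^{-y}Z^1_s\ge z^*(\beta_s)\}$. Equivalently, it is the event $\{y\le \sup_{s\in[0,t]}\ln(Z^1_s/z^*(\beta_s))\}$. The inequality is strict or non-strict depending on whether the hitting happens strictly before $t$. Since $\xi^*_t$ is a supremum over $y\ge0$, these boundary issues only affect left- versus right-limits in $t$. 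I will resolve them by using the càdlàg (right-continuous) modification implicit in $D^*\in\mathcal{D}$, i.e.\ by taking $\xi^*_t=\lim_{u\downarrow t}$. This yields
\[
\xi^*_t=\sup_{0\le s\le t}\Big(\ln\frac{Z^1_s}{z^*(\beta_s)}\Big)\vee 0 ,
\]
where the maximum with $0$ comes from the restriction $y\ge0$ (the convention $\xi^*_{0^-}=0$) and the case $z^*=\infty$ contributes $-\infty$.

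Exponentiating then gives $D^*_t=e^{-\xi^*_t}=\inf_{0\le s\le t}\big(\frac{z^*(\beta_s)}{Z^1_s}\wedge1\big)$, which is (\ref{D^*}). As a consistency check, $Z^{D^*}_t=Z^1_tD^*_t\le z^*(\beta_t)$, and $D^*$ decreases only when $Z^{D^*}$ is at the boundary, which is the Skorokhod minimal reflection.

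The main obstacle is the careful treatment of the strict inequality $\tau^*<t$ versus attainment of the supremum at $s=t$. A related issue is measurability and attainment of $\sup_{s\le t}Z^1_s/z^*(\beta_s)$ when $z^*$ is only lower semicontinuous. The function $s\mapsto Z^1_s/z^*(\beta_s)$ is upper semicontinuous, so the supremum over the compact $[0,t]$ is attained. This is what makes the hitting-time characterization exact, and I will use it first to settle the right-continuity at the jump times.
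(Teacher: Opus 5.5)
Your proposal is correct and follows essentially the same route as the paper: you unwind $\xi^*_t$ through the hitting-time form (\ref{deftau*new}) of $\tau^*$ and the scaling $Z^{1,ze^{-y},\beta}=e^{-y}Z^1$, exponentiate, and get right-continuity from the lower semicontinuity of $z^*$ (equivalently, the upper semicontinuity of $s\mapsto Z^1_s/z^*(\beta_s)$). You also treat the strict inequality $\tau^*<t$ explicitly: it literally gives $\sup_{0\le s<t}$, the left-continuous version, so the claimed representation is its right-continuous modification. This is more careful than the paper's chain of equivalences, which writes $\exists s\in[0,t]$ directly.
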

\begin{proof}
By (\ref{D^*def}), $D^*_{0^-}=1$ and for any $t\geq 0$ one has \[D_t^*=\exp(-\xi_t^*),\] where $\xi_t^*=\sup\left\{y\ge 0 \mid \tau^*(ze^{-y}, \beta) < t\right\}$ and $\tau^*(z,\beta) = \inf \{ s \ge 0 : \hat{Z}^{z,\beta}_s\geq z^*(\hat{\beta}^\beta_s) \}$ (cf.\ (\ref{deftau*new})). We then obtain the following chain of equivalences for $t\geq 0$: \begin{align*}
\tau^*(z e^{-y}, \beta) < t&\iff \exists s \in [0, t]: e^{-y} Z_s^{ 1} \ge z^*(\beta_s)\iff \exists s \in [0, t]: y \le \ln(\frac{Z_s^{1}}{z^*(\beta_s)})\\
&\iff \xi_t^* = \sup_{0 \le s \le t} \left[ \ln \left( \frac{Z_s^{1}}{z^*(\beta_s)} \right) \right]^+.
\end{align*}
Finally, substituting this into $D_t^* = \exp(-\xi_t^*)$ and observing that $\exp(-(\ln x)^+) = \min(1, \frac{1}{x})$, we obtain (\ref{D^*}). \\
\indent Clearly, $D^*$ as in (\ref{D^*}) is nonincreasing and càdlàg given the lower-semicontinuity of $z^*$ (cf.\ Lemma \ref{freeboundaryexistence}). As a matter of fact, $t \mapsto D_t^*$ admits left-limits at any point since it is nonincreasing. To show that $D^*$ has right-continuous sample paths, we follow the proof of Proposition 5.8 in \cite{de2017optimal} and first notice that
\begin{equation}\label{Dcadlag}
    \liminf_{s \downarrow t} \left( \frac{z^*(\beta_s)}{Z_s^1} \wedge 1 \right) \ge \frac{z^*(\beta_t)}{Z_t^1} \wedge 1,
\end{equation}
by the lower-semicontinuity of $z^*$ (cf.\ Lemma \ref{freeboundaryexistence}) and the continuity of the state processes $(Z_t^1, \beta_t)_t$. Moreover, from (\ref{Dcadlag}) we obtain
\begin{align}\label{dcadlag2}
    \lim_{s \downarrow t} D_s^* &= D_t^* \wedge \lim_{s \downarrow t} \inf_{t < u \le s} \left( \frac{z^*(\beta_u)}{Z_u^1} \wedge 1 \right) \notag \\
    &= D_t^* \wedge \liminf_{s \downarrow t} \left( \frac{z^*(\beta_s)}{Z_s^1} \wedge 1 \right)  \\
    &\ge D_t^* \wedge \left( \frac{z^*(\beta_t)}{Z_t^1} \wedge 1 \right) = D_t^*.\notag
\end{align}
Since $\lim_{s \downarrow t} D_s^* \le D_t^*$ by the monotonicity of $t \mapsto D_t^*$, (\ref{dcadlag2}) implies right continuity.
\end{proof}
\indent  We now have all the necessary ingredients to derive the optimal controls for our primal optimization problem (cf.\ (\ref{V}))
\[
V(x,\beta)=\max_{(\pi,c)\in\mathcal{A}(x)}\mathbb{E}_{x,\beta}\bigg[\int_{0}^{\infty}e^{-\delta t}u(c_t)\,dt\bigg].
\]
In the following, when needed, we stress the dependence of the unique strong solution to (\ref{Z^D}) on the initial data $(z,\beta)\in\mathcal{O}$ and on $D\in\mathcal{D}$ by writing $Z^{z,\beta,D}$.
\begin{prop}\label{optimalstrategies}
Let $\tilde{V}$ as in (\ref{tildeV}) (see also (\ref{tildeVhatV})). We have 
\begin{equation}\label{primalstrongduality}
V(x,\beta)=\inf_{z>0}\big(\tilde{V}(z,\beta)+zx\big),\quad (x,\beta)\in \mathcal{O},
\end{equation}
and for all $(x,\beta)\in\mathcal{O}$, there exists $\hat{z}:=\hat{z}(x,\beta)>0$ such that $\tilde{V}_z(\hat{z},\beta)=-x$. Furthermore, the optimal primal controls for (\ref{V}) are given by 
\[
c_t^*=(Z_t^{\hat{z},\beta,D^*})^{-\frac{1}{\gamma}}
\qquad\text{and}\qquad
\pi_t^*=\frac{\beta_t}{\sigma^2}Z_t^{\hat{z},\beta, D^*}\tilde{V}_{zz}(Z_t^{\hat{z},\beta,D^*},\beta_t) + \frac{ \sigma_\beta}{\sigma} \tilde{V}_{z\beta}(Z_t^{\hat{z},\beta,D^*},\beta_t),\quad t\geq 0,
\]
where $(D^*_t)_t$ is the optimal singular control for problem (\ref{SCP}); that is 
\[
D_t^*=\inf_{0\le s\le t}\left( \frac{z^*(\beta_s)}{Z_s^{1}} \right)\wedge 1,\quad t\geq 0,\quad D_{0^-}^*=1.
\]
Finally, the optimal wealth process is such that 
\[
X_t^*=-\tilde{V}_z(Z_t^{\hat{z},\beta, D^*},\beta_t),\quad t\geq 0,
\]
with $(X_t^*)_{t\geq0}:=(X_t^{\pi^*,c^*})_{t\geq0}$.
\end{prop}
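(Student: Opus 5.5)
The plan has four steps. First I find the dual minimizer $\hat z$. Then I build the candidate wealth $X^*_t:=-\tilde V_z(Z^{\hat z,\beta,D^*}_t,\beta_t)$ and check with It\^o's formula that it is the wealth generated by $(\pi^*,c^*)$. Next I show it meets the static budget constraint with equality. Finally, equality in the Fenchel and budget inequalities turns weak duality \eqref{weakduality} into \eqref{primalstrongduality}.

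\emph{Step 1: the dual minimizer.} By Corollary \ref{cor}, $\tilde V_z=v$ is continuous. By Proposition \ref{monotonicity} it is nondecreasing in $z$, so $z\mapsto \tilde V(z,\beta)+zx$ is convex. Proposition \ref{limits} gives $\lim_{z\downarrow0}v=-\infty$ and $\lim_{z\to\infty}v=0$. Hence for every $x>0$ the intermediate value theorem yields $\hat z$ with $v(\hat z,\beta)=-x$. Since $-x<0$, the point $(\hat z,\beta)$ lies in $\mathcal W$. By convexity, $\hat z$ minimizes $z\mapsto\tilde V(z,\beta)+zx$, so $\inf_{z>0}(\tilde V(z,\beta)+zx)=\tilde V(\hat z,\beta)+\hat zx$.

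\emph{Step 2: the candidate wealth process.} Set $Z^*:=Z^{\hat z,\beta,D^*}$, $c^*_t:=(Z^*_t)^{-1/\gamma}$ and $X^*_t:=-v(Z^*_t,\beta_t)\ge0$ (recall $v\le 0$ by Proposition \ref{bounds}). Then $X^*_0=x$, since by \eqref{D^*} no jump of $D^*$ occurs at time $0$ when $\hat z<z^*(\beta)$.
\begin{itemize}
\item I apply It\^o's formula to $\mathcal H_tD^*_tX^*_t$, in the form $e^{-\delta t}\hat z^{-1}Z^*_t X^*_t$.
\item Since $v$ is only $C^1(\mathcal O)\cap C^\infty(\mathcal W\cup\mathring{\mathcal S})$, I would justify It\^o through a mollification/localization argument. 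It uses that $Z^*$ stays in $\overline{\mathcal W}$ by \eqref{D^*}, together with the continuous extension of the second-order part of $\mathcal L v$ up to $\partial\mathcal W$ (Corollary \ref{corregularity}).
\item The drift is computed with the PDE of Proposition \ref{C^2W}, rewritten under $\mathbb P$ (undoing the measure change of Proposition \ref{measurechange}). It should give $d(\mathcal H_tD^*_tX^*_t)=\mathcal H_tD^*_t(\ell-c^*_t)\,dt+(\text{local martingale})+(\text{terms in }dD^*)$.
\item The $dD^*$ terms are proportional to $v$ or $v_z$ evaluated where $D^*$ decreases, i.e.\ on $\{Z^*_t=z^*(\beta_t)\}$, where $v=v_z=0$. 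So they vanish; this is the complementary slackness $\int X^*_t\,dD^*_t=0$.
\item Dividing by $\mathcal H_tD^*_t$ and reading off the $dW$ coefficient, $-(v_z\,dZ^*+v_\beta\,d\beta)$ contributes $\big(\tfrac{\beta_t}{\sigma}Z^*_tv_z+\sigma_\beta v_\beta\big)dW_t$. Matching with $\sigma\pi_t\,dW_t$ in \eqref{wealthdynamics} gives $\pi^*_t=\tfrac{\beta_t}{\sigma^2}Z^*_t\tilde V_{zz}+\tfrac{\sigma_\beta}{\sigma}\tilde V_{z\beta}$. Uniqueness of the SDE then identifies $X^*=X^{\pi^*,c^*}$.
\item Admissibility follows: $X^*\ge0$ by construction, and the integrability conditions hold by continuity of $v_z,v_\beta$ (Corollary \ref{cor}).
\end{itemize}

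\emph{Step 3: budget equality and strong duality.} Localize with $\tau_n$ as in the proof of Proposition \ref{prop_staticbudget}. This gives $\mathbb E[\int_0^{\tau_n}\mathcal H_tD^*_t(c^*_t-\ell)dt]=x-\mathbb E[\mathcal H_{\tau_n}D^*_{\tau_n}X^*_{\tau_n}]$. Then let $n\to\infty$, using $|v|\le\mathbb E[\int_0^\infty e^{-rt}\hat Z_t^{-1/\gamma}dt]$ (Proposition \ref{bounds}) and \eqref{E(Z)ineq} to show $\mathbb E[\mathcal H_{\tau_n}D^*_{\tau_n}X^*_{\tau_n}]\to0$. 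Integrability of $c^*$ follows from Proposition \ref{wellposedness}. This yields $\mathbb E[\int_0^\infty\mathcal H_tD^*_t(c^*_t-\ell)dt]=x$. Proposition \ref{prop_staticbudget}-(1) already gives $\le x$ for every $D$, hence \eqref{sup=x} holds.

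Now use equality in the Legendre bound, $u(c^*_t)-Z^*_tc^*_t=\tilde u(Z^*_t)$, together with the budget equality and $Z^*_t=e^{\delta t}\hat z\mathcal H_tD^*_t$. This gives $\mathbb E[\int_0^\infty e^{-\delta t}u(c^*_t)dt]=\tilde V(\hat z,\beta)+\hat zx$, where Theorem \ref{establish} provides optimality of $D^*$. Combined with \eqref{weakduality} and Step 1, this proves \eqref{primalstrongduality} and the optimality of $(\pi^*,c^*)$.

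\emph{Main obstacle.} I expect the hardest parts to be the rigorous It\^o formula for $v(Z^*_t,\beta_t)$ with $v$ only $C^1$ across the irregular, merely lower-semicontinuous boundary, and the transversality limit in Step 3. For the first, I would approximate by $v^\varepsilon=v*\rho_\varepsilon$ on compacts of $\mathcal O$. I would use the $C^1$ convergence and the locally bounded continuous extension of the second-order operator from Corollary \ref{corregularity}. I would also use that $(Z^*,\beta)$ has a density for Lebesgue-a.e.\ $t$ (Hörmander condition, Lemma \ref{strongfeller}), so the time spent on $\partial\mathcal W$ is null.
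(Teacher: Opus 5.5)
Your proposal is a verification argument, and it reaches strong duality by a different route from the paper. Apart from the It\^o step discussed in the second paragraph, it is sound.

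\textbf{How the two routes differ.} The paper never differentiates $v$ twice for the duality part. It freezes $D^*$, which is suboptimal for the initial points $z\pm\varepsilon$, compares one-sided difference quotients, and uses the differentiability of $\tilde V$ (Corollary~\ref{cor}). This gives the envelope identity $-\tilde V_z(z,\beta)=\mathbb E_{z,\beta}[\int_0^\infty\mathcal H_tD^*_t(c^*_t-\ell)\,dt]$ for every $z$. At $z=\hat z$ this is the budget equality, with no transversality estimate. An admissible $\pi^*$ then comes from Proposition~\ref{prop_staticbudget}-(2) (Snell envelope plus martingale representation), and the chain of inequalities closes. It\^o--Meyer enters only afterwards, to read off the feedback form of $\pi^*$.

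You instead set $X^*:=-v(Z^*,\beta)$, and one It\^o computation gives you the wealth dynamics, admissibility, the formula for $\pi^*$, and the representation $X^*=-\tilde V_z(Z^*,\beta)$ together. The paper only asserts that representation ``by the strong Markov property''.

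Your computations check out. With $\mathcal L^{\mathbb P}$ the $\mathbb P$-generator of $(Z^1,\beta)$, one has $\mathcal L^{\mathbb P}(zv)-\delta zv=z(\mathcal Lv-rv)$. The $dD^*$-terms vanish, including possible jumps of $D^*$ (allowed by the mere lower semicontinuity of $z^*$), because at such times both $Z^*_{t-}$ and $Z^*_t$ lie in $\mathcal S$. Your transversality step also works. Proposition~\ref{bounds} and \eqref{E(Z)ineq} give $0\le\mathcal H_tD^*_tX^*_t\le\phi^{-1}\hat z^{-1/\gamma}e^{-\phi t}M_t^{1-1/\gamma}$ with $\phi=r+\frac1\gamma(\delta-r)$. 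Since $\sup_tM_t^{1-1/\gamma}$ is integrable by the maximal inequality for nonnegative martingales, dominated convergence applies at the $\tau_n$.

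Your ordering also has a real advantage. Admissibility comes first, so Proposition~\ref{prop_staticbudget}-(1) supplies the ``$\le x$'' half of \eqref{sup=x} for every $D$. The paper invokes part (2) after checking only the $D^*$-instance $\chi(\hat z)=x$.

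\textbf{The weak point.} In your route the It\^o step carries everything, strong duality included; in the paper it only identifies $\pi^*$. The justification you sketch does not hold as stated. Lemma~\ref{strongfeller} gives a density for the \emph{unreflected} diffusion $(\hat Z,\hat\beta)$, not for the reflected pair $(Z^*,\beta)$, so it does not show that the time spent on $\partial\mathcal W$ is null.

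Concretely, $V^{(1)}$ is tangent to the curves $z=Ce^{\beta^2/(2\sigma\sigma_\beta)}$, and $\ln Z^1_t-\beta_t^2/(2\sigma\sigma_\beta)$ has no martingale part. If a piece of $\partial\mathcal W$ had this shape and that drift were positive there, $(Z^*,\beta)$ would stick to $\partial\mathcal W$ for positive time. A mollification of $v$ across $\partial\mathcal W$ does not identify the drift on such time sets. Null occupation is therefore an extra property of $z^*$ that needs its own proof. The paper asserts the analogous fact $\mathbb P((Z^*_t,\beta_t)\in\mathcal W)=1$ without proof, but only its formula for $\pi^*$ depends on it.

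You have two ways to repair this.
\begin{itemize}
\item Set up the It\^o step using only values on $\overline{\mathcal W}$. Since $v=\nabla v=0$ on $\partial\mathcal W$, the continuous extension in Corollary~\ref{corregularity} then produces exactly the drift $\ell-c^*_t$ there, so positive occupation would be harmless.
\item Follow the paper's envelope argument together with Proposition~\ref{prop_staticbudget}-(2) for the duality part, and keep It\^o only for identifying $\pi^*$. The missing ``$\le x$'' for every $D$ follows from the first-order optimality of $D^*$ along the convex perturbations $D^*+\epsilon(D-D^*)$, using convexity of $\tilde u$.
\end{itemize}
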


\begin{proof}
\textbf{Step 1.} We first establish the existence of $\hat{z}$ such that $\tilde{V}_z(\hat{z},\beta)=-x$ for all $x>0$. Notice that the minimization problem $\inf_{z>0} (\tilde{V}(z,\beta)+zx)$ is equivalent to solving 
\begin{equation}\label{minimizer}
v(z,\beta)=-x,
\end{equation}
upon using the identity $\tilde{V}_z=v$ given by Corollary \ref{cor}.  
Since $v$ is continuous (see Proposition \ref{uppersemi}), nondecreasing in $z$ (see Proposition \ref{monotonicity}), by Proposition \ref{limits} satisfies 
\[
\lim_{z\to 0}v(z,\beta)=-\infty,
\] as well as $v(z,\beta)=0$ for any $z\geq z^*(\beta)$ and $z\mapsto v(z,\beta)$ is strictly increasing on $\mathcal{W}$, there exists a unique $0<\hat{z}<z^*(\beta)$ such that $\tilde{V}_z(\hat{z},\beta)=-x$.\\
\\
\indent \textbf{Step 2.} Next, we prove that the strong duality relation (\ref{primalstrongduality}) indeed holds. Since we have already shown the weak-duality (cf.\ (\ref{weakduality})), namely 
\begin{equation}\label{wd}V(x,\beta) \leq \inf_{z>0}\bigg(\tilde{V}(z,\beta) + zx\bigg),\end{equation} it suffices to consider the reverse inequality. Recall the optimal singular control to problem $(\ref{SCP})$, given by (\ref{D^*}), and define $c_t^*:=(Z_t^{z,\beta,D^*})^{-\frac{1}{\gamma}}$ as the candidate optimal consumption plan. Then, we set \[
\chi(z):=\mathbb{E}_{z,\beta}\bigg[\int_0^\infty \mathcal{H}_tD_t^*(c_t^*-\ell)\,dt\bigg],
\] with the aim of showing that actually $\chi(z)=-\tilde{V}_z(z,\beta)$ for all $(z,\beta)\in\mathcal{O}$. To that end, we fix $(z,\beta)\in\mathcal{O}$ and $\varepsilon>0$ (small enough), and note that $D^*$ is independent of $\varepsilon$. Since $D^*$ is suboptimal for $\tilde{V}(z+\varepsilon,\beta)$, we find
\begin{align*}
\tilde{V}(z+\varepsilon,\beta) - \tilde{V}(z,\beta) &\le \mathbb{E}\Big[\int_0^\infty e^{-\delta t} \Big( \big( \tilde{u}(Z_t^{z+\varepsilon,\beta, D^*}) - \tilde{u}(Z_t^{z,\beta, D^*}) \big) + \ell \big( Z_t^{z+\varepsilon,\beta, D^*} - Z_t^{z,\beta, D^*} \big) \Big) dt \Big] \\
&= \Big( (z+\varepsilon)^{-\frac{1-\gamma}{\gamma}} - z^{-\frac{1-\gamma}{\gamma}} \Big) \mathbb{E}\Big[\int_0^\infty e^{-\delta t} \tilde{u}\Big(Z_t^{1,\beta, D^*}\Big) dt \Big]\\
&\quad+ \varepsilon \, \ell \, \mathbb{E}\Big[\int_0^\infty e^{-\delta t} Z_t^{1,\beta, D^*} dt \Big].
\end{align*}
Dividing by $\varepsilon$ and letting $\varepsilon \to 0$ gives
\begin{equation}\label{limsup_z_tilde}
\limsup_{\varepsilon \to 0} \frac{\tilde{V}(z+\varepsilon,\beta) - \tilde{V}(z,\beta)}{\varepsilon} \le \mathbb{E}\Big[\int_0^\infty e^{-\delta t} \Big( -(Z_t^{z,\beta,D^*})^{-\frac{1}{\gamma}} + \ell \Big) Z_t^{1,\beta, D^*} dt \Big].
\end{equation}
A symmetric argument applied to $\tilde{V}(z,\beta) - \tilde{V}(z-\varepsilon,\beta)$, by using now that $D^{*}$ is suboptimal for the problem starting at $(z-\varepsilon,\beta)$, gives 
\begin{equation}\label{liminf_z_tilde}
\liminf_{\varepsilon \to 0} \frac{\tilde{V}(z,\beta) - \tilde{V}(z-\varepsilon,\beta)}{\varepsilon} \ge \mathbb{E}\Big[\int_0^\infty e^{-\delta t} \Big( -(Z_t^{z,\beta,D^*})^{-\frac{1}{\gamma}} + \ell \Big) Z_t^{1,\beta, D^*} dt \Big].
\end{equation}
Upon using that $\tilde{V}_z$ exists and is continuous (as $\tilde{V}_z=v$), (\ref{limsup_z_tilde}) and (\ref{liminf_z_tilde}) imply \[\tilde{V}_z(z,\beta)=\mathbb{E}\Big[\int_0^\infty e^{-\delta t} \Big( -(Z_t^{z,\beta,D^*})^{-\frac{1}{\gamma}} + \ell \Big) Z_t^{1,\beta, D^*} dt \Big].\] 
Since now $(Z_t^{z,\beta,D^*})^{-\frac{1}{\gamma}}=c_t^*$ and $e^{-\delta t}Z_t^{1,\beta, D^*}=\mathcal{H}_tD_t^*$ (cf.\ (\ref{Y})), we have \[-\tilde{V}_z(z,\beta)=\mathbb{E}_{z,\beta}\bigg[\int_0^\infty \mathcal{H}_tD_t^*(c_t^*-\ell)\,dt\bigg]=\chi(z).\] In particular, for $\hat{z}$ as in Step 1, we have \begin{equation}\label{chi1}-\tilde{V}_z(\hat{z},\beta)=\mathbb{E}_{\hat{z},\beta}\bigg[\int_0^\infty \mathcal{H}_tD_t^*(c_t^*-\ell)\,dt\bigg]=\chi(\hat{z}),\end{equation} which, combined with $x=-\tilde{V}_z(\hat{z},\beta)$, yields $x=\chi(\hat{z})$.\\
\indent The previous findings and (\ref{Y}) yield the following chain of equations 
\begin{align*}
\hat{z}\mathbb{E}_{\hat{z},\beta}\bigg[\int_0^\infty \mathcal{H}_tD_t^*(c_t^*-\ell)\,dt\bigg]
&=\mathbb{E}_{\hat{z},\beta}\bigg[\int_0^\infty e^{-\delta t}Z_t^{D^*}(c_t^*-\ell)\,dt\bigg]\\
&=\mathbb{E}_{\hat{z},\beta}\bigg[\int_0^\infty e^{-\delta t}\big(u(c_t^*)-\tilde{u}(Z_t^{D^*})\big)\,dt\bigg]
     -\mathbb{E}_{\hat{z},\beta}\bigg[\int_0^\infty e^{-\delta t}\ell Z_t^{D^*}\,dt\bigg].
\end{align*} By Proposition \ref{prop_staticbudget}-(2), there exists an investment strategy $\pi^*$ such that $(\pi^*,c^*)\in\mathcal{A}(x)$ thanks to $\chi(\hat{z})=x$. Hence, by using (\ref{wd}), we obtain
\begin{align}\label{chainofineq}
\tilde{V}(\hat{z},\beta)+\hat{z}x
&=\mathbb{E}_{\hat{z},\beta}\bigg[\int_0^\infty e^{-\delta t}u(c_t^*)\,dt\bigg]\le \sup_{(\pi,c)\in\mathcal{A}(x)}\mathbb{E}_{x,\beta}\bigg[\int_0^\infty e^{-\delta t}u(c_t)\,dt\bigg]\notag\\
&\le \inf_{z>0} \big(\tilde{V}(z,\beta)+zx\big)\le \tilde{V}(\hat{z},\beta)+\hat{z}x.
\end{align}
This in turn yields the strong duality
\[
V(x,\beta)=\inf_{z>0}(\tilde{V}(z,\beta)+zx)
=\tilde{V}(\hat{z},\beta)+\hat{z}x.
\]
\indent \textbf{Step 3.} In this step, we derive the optimal primal controls associated to the stochastic control problem (\ref{V}). It follows from (\ref{chainofineq}) that
\[
V(x,\beta)=\mathbb{E}_{\hat{z},\beta}\bigg[\int_0^\infty e^{-\delta t}u(c_t^*)\,dt\bigg],
\]
so that $c_t^*=(Z_t^{\hat{z},\beta,D^*})^{-\frac{1}{\gamma}}$ is optimal. Moreover, by the strong Markov property and $x=-\tilde{V}_z(\hat{z},\beta)$, we have
\[
X_t^*=-\tilde{V}_z(Z_t^{\hat{z},\beta,D^*},\beta_t).
\]
Recalling the regularity of $\tilde{V}_z = v$ as in Corollary \ref{corregularity}, noticing that $\mathbb{P}((Z_t^{\hat{z},\beta,\D^*},\beta_t)\in \mathcal{W})=1$ for all $t\geq 0-$, and that the support of the random measure $dD^*_{\cdot}$ is $\{t\geq 0:\, \tilde{V}_{zz}(z^*(\beta_t),\beta_t)=0\}$ (due to $\tilde{V}_{zz}=v_z$ and Proposition \ref{prop:C1property}), an application of Itô-Meyer’s formula leads to
\begin{align}\label{comparingdiffusion}
dX_t^*
=&-\tilde{V}_{zz}(Z_t^{\hat{z},\beta,D^*},\beta_t)\left[(\delta-r)Z_t^{\hat{z},\beta,D^*}\,dt-\frac{\beta_t}{\sigma}Z_t^{\hat{z},\beta,D^*}\,dW_t\right]\\
 &-\tilde{V}_{z\beta}(Z_t^{\hat{z},\beta,D^*},\beta_t)\left[\kappa(\overline{\beta}-\beta_t)\,dt-\sigma_\beta\,dW_t\right]\notag\\
&-\frac12\frac{\beta_t^2}{\sigma^2}(Z_t^{\hat{z},\beta,D^*})^2\tilde{V}_{zzz}(Z_t^{\hat{z},\beta,D^*},\beta_t)\;dt\notag\\
 &-\frac12\sigma_\beta^2\tilde{V}_{z\beta\beta}(Z_t^{\hat{z},\beta,D^*},\beta_t)\,dt
 -\frac{\beta_t}{\sigma}Z_t^{\hat{z},\beta,D^*}\sigma_\beta\tilde{V}_{zz\beta}(Z_t^{\hat{z},\beta,D^*},\beta_t)\,dt\notag,\quad t\geq 0.
\end{align}
Comparing the $dW_t$ terms in (\ref{wealthdynamics}) and (\ref{comparingdiffusion}) finally gives
\[
\pi_t^*=\frac{\beta_t}{\sigma^2}Z_t^{\hat{z},\beta,D^*}\tilde{V}_{zz}(Z_t^{\hat{z},\beta,D^*},\beta_t) + \frac{ \sigma_\beta}{\sigma} \tilde{V}_{z\beta}(Z_t^{\hat{z},\beta,D^*},\beta_t),\quad t\geq 0,
\]
which completes the proof.
\end{proof}
\section{Numerical Illustrations}\label{numerics}
In this section, we provide numerical illustrations of our results. In order to illustrate the singular control mechanism and how $(D^*_t)_t$ ensures the nonnegativity of the wealth process, in Figure \ref{fig:simulation}, we simulate the optimal primal state process $(X_t^{\pi^*,c^*})_t$, the optimal dual state process $(Z^{\hat{z},D^*}_t)_t$, and the optimal singular control $(D^*_t)_t$. For this simulation, we fix the parameters: $r = 0.03$, $\delta = 0.04$, $\ell=0.6$, $\gamma = 1.5$, $\kappa = 0.25$, $\bar{\beta} = 0.05$, $\sigma_{\beta} = 0.03$, and $\sigma = 0.18$.\\
\indent Figure \ref{fig:sim_A} depicts a path of the controlled dual state process $(Z^{\hat{z},D^*}_t)_t$ (black line) and the (moving stochastic) free boundary $(z^*(\beta_t))_t$ (red line). Observe that each time the dual state hits the free boundary, it is reflected and pushed downward. This action is driven by the singular control $(D^*_t)_t$. Indeed, from Figure \ref{fig:sim_B}, we see that each time the free boundary is reached, the decreasing singular control jumps to keep the process below the boundary. Crucially, in Figure \ref{fig:sim_C}, we see that the points where the dual state touches the free boundary correspond exactly to the wealth process hitting zero. At these points, the wealth is reflected upward, ensuring that the no-borrowing constraint is satisfied.
\begin{figure}[h]
    \centering
    
    \begin{subfigure}[t]{0.48\textwidth}
        \centering
        \includegraphics[width=\textwidth]{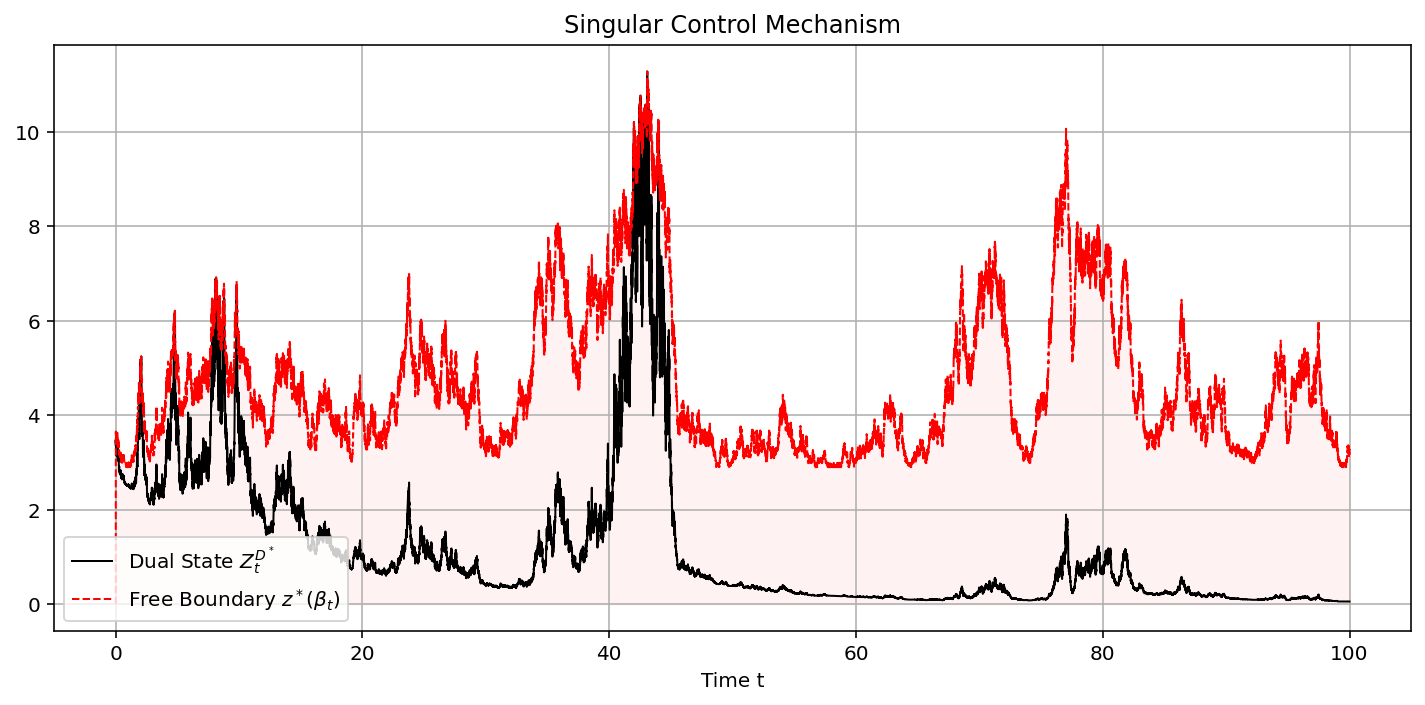}
        \caption{Simulation of dual state $(Z^{*}_t)_t$.}
        \label{fig:sim_A}
    \end{subfigure}
    \hfill 
    \begin{subfigure}[t]{0.48\textwidth}
        \centering
        \includegraphics[width=\textwidth]{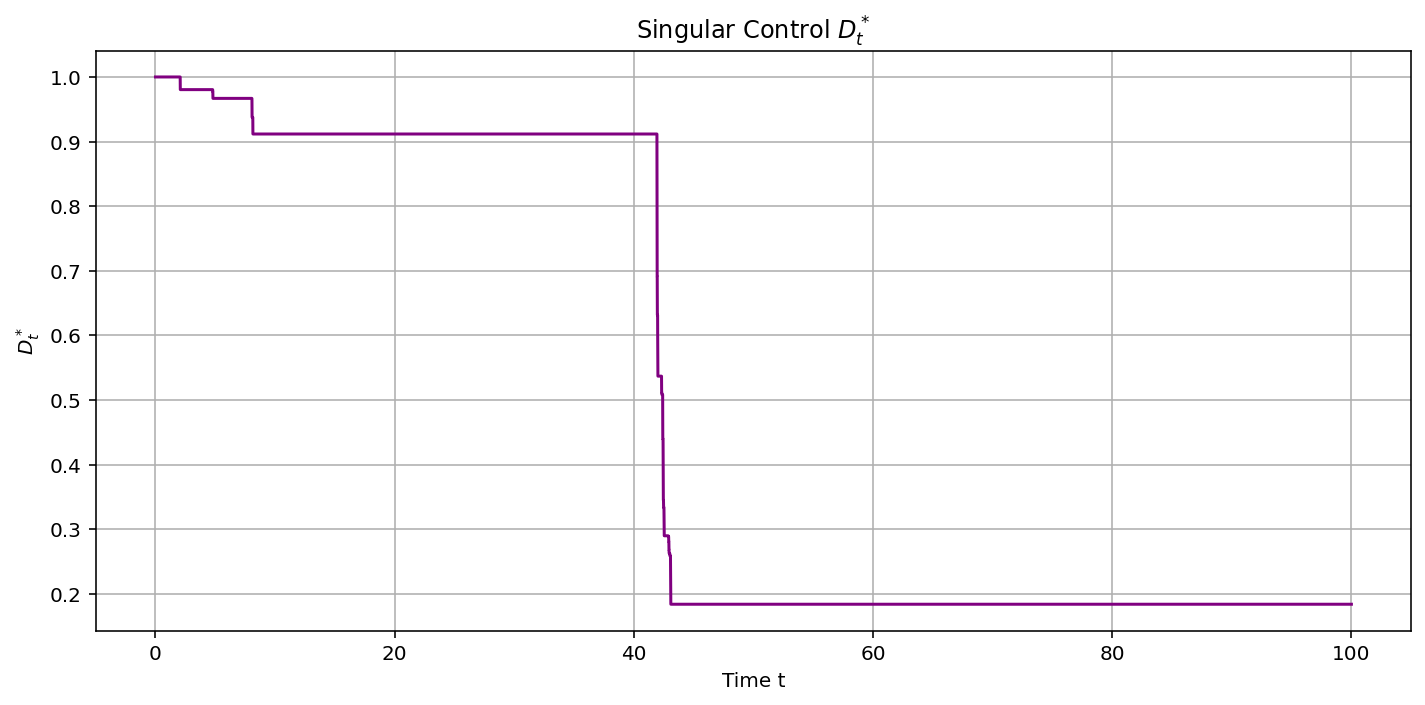}
        \caption{Simulation of optimal singular control $(D^*_t)_t$.}
        \label{fig:sim_B}
    \end{subfigure}

    \vspace{0.5cm}

    \begin{subfigure}[t]{0.48\textwidth}
        \centering
        \includegraphics[width=\textwidth]{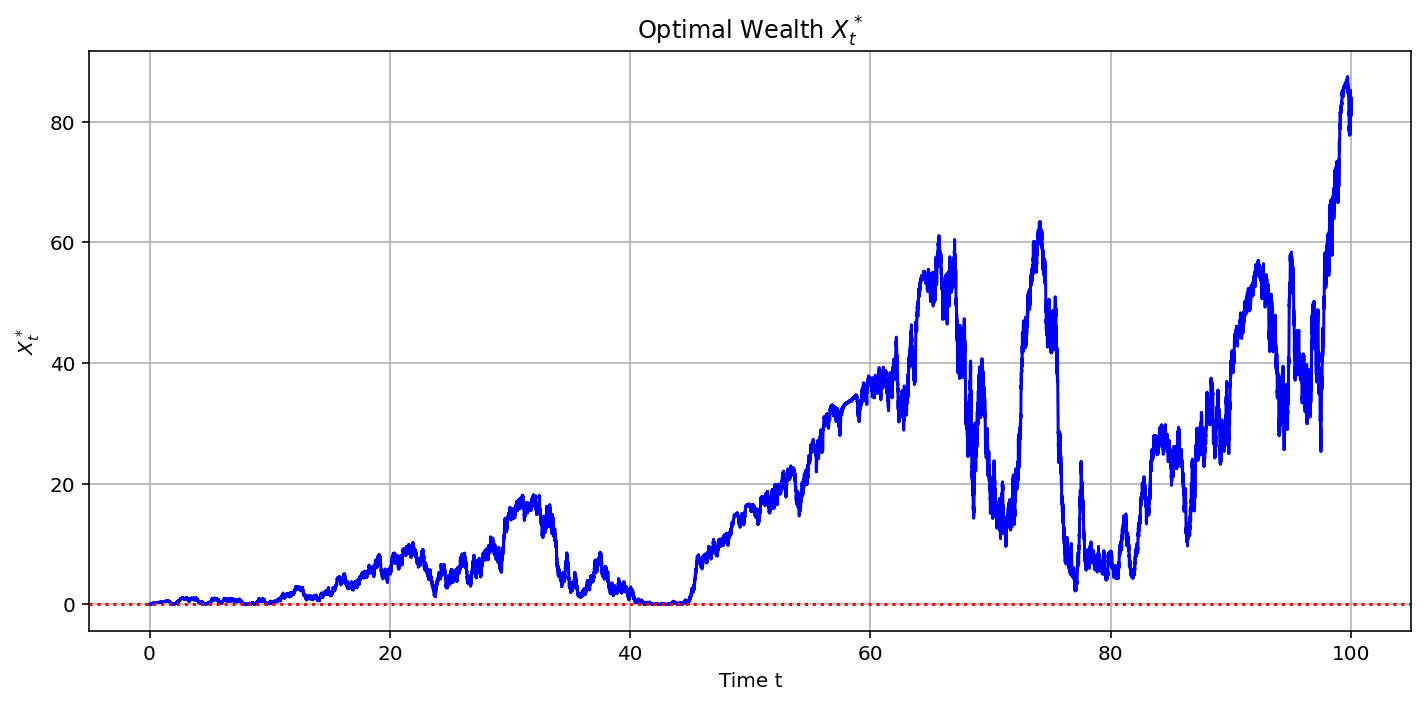}
        \caption{Simulation of optimal wealth $(X^*_t)_t$.}
        \label{fig:sim_C}
    \end{subfigure}
    
    \caption{Simulation of optimal state processes.}
    \label{fig:simulation}
\end{figure}\\
\\
One of the main contributions of this paper is the inclusion of the stochastic factor $(\beta_t)_t$, representing the expected excess return of the risky asset. In contrast to the standard Merton model, where $(\beta_t)_t$ is constant, we model it as a stochastic process. Figure \ref{fig:comparison} compares the optimal wealth trajectories under these two frameworks.\\
\indent For this comparison, we fix the parameters: $r = 0.03$, $\delta = 0.04$, $\ell = 0.6$, $\gamma = 1.5$, $\bar{\beta} = 0.05$, and $\sigma = 0.18$. In the stochastic case, we additionally set $\kappa=0.25$ and $\sigma_\beta=0.03$, while in the constant case we set $\kappa=0=\sigma_\beta$ and $\beta_0=\bar{\beta}$ to ensure that $(\beta_t)_t \equiv \bar{\beta}$. In both cases, we simulate $10,000$ paths of the Brownian motion $(W_t)_t$, and plot the optimal wealth as the average over these paths. The blue line depicts the wealth of the agent who assumes a stochastic $(\beta_t)_t$, while the red line represents the wealth of the agent who assumes a constant $(\beta_t)_t$. In the following, we denote by \emph{Stochastic Agent} the agent with stochastic $(\beta_t)_t$, and by \emph{Constant Agent} the agent with constant $(\beta_t)_t$.\\
\indent We observe that the Stochastic Agent systematically accumulates greater wealth. This outcome is driven by a robust economic mechanism: The Stochastic Agent’s strategic asset allocation, namely increasing exposure to the risky asset when expected returns are high (that is, when $\beta_t$ is high), generates substantial excess returns. As a result, the Stochastic Agent’s wealth exceeds that of the Constant Agent. In contrast, the Constant Agent lacks this flexibility and is confined to a rigid strategy that systematically fails to exploit time-varying risk premia, leading to inferior wealth accumulation.\\
    \begin{figure}[h]
    \centering
    \includegraphics[width=0.75\textwidth]{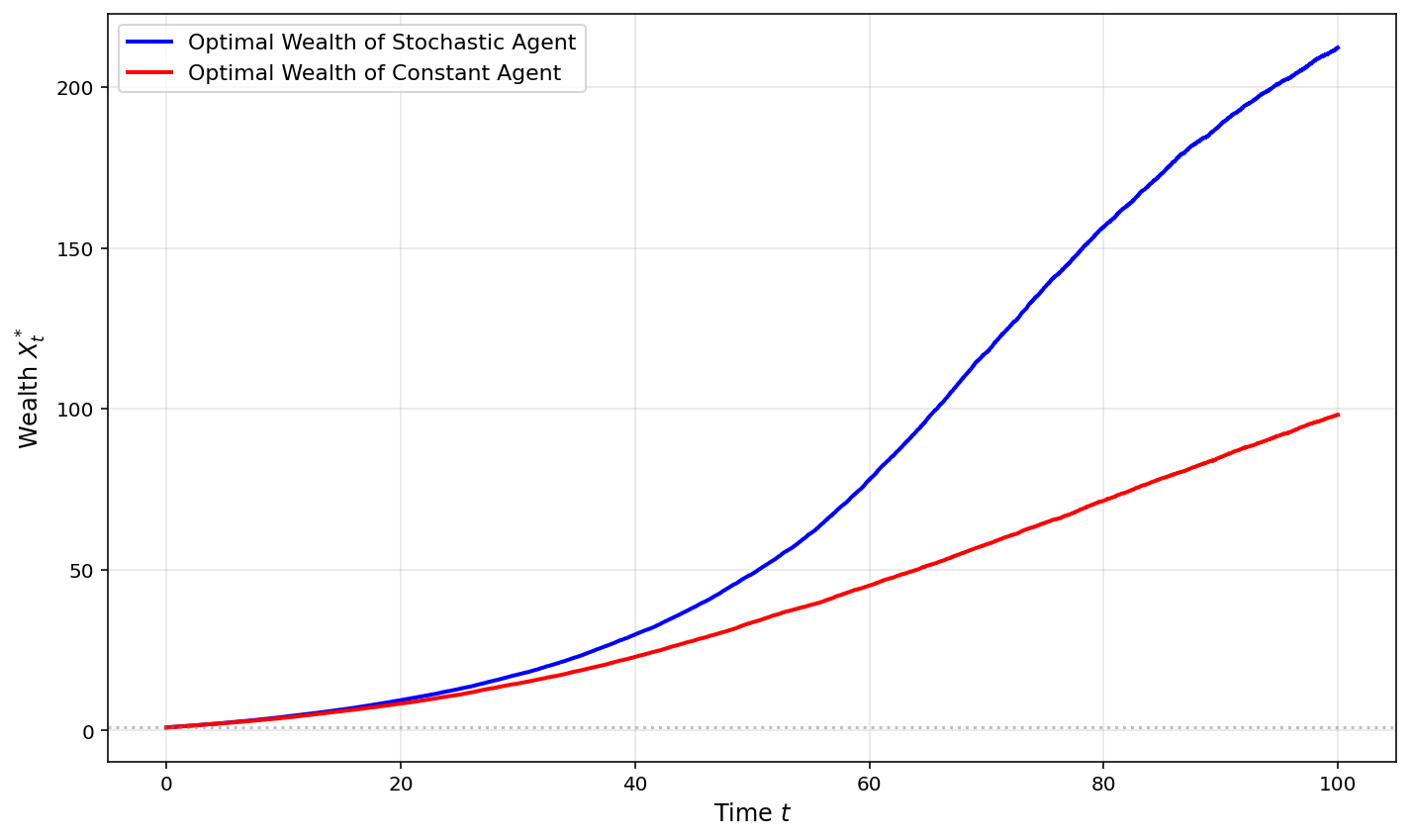}
    \caption{Comparison of the average optimal wealth trajectories for the Stochastic Agent and the Constant Agent ($(\beta_t)_t \equiv \bar{\beta}$).}
    \label{fig:comparison}
\end{figure}
\indent At the same time, the initial advantage builds up only gradually, so that the two wealth paths remain relatively close at the beginning of the horizon. Economically, under perfect negative correlation, adverse contemporaneous shocks to the risky asset are accompanied by improvements in future investment opportunities. This hedging effect dampens the short-run urgency of aggressive portfolio adjustment and smooths the initial wealth dynamics. Hence, although the Stochastic Agent is still able to exploit the time-varying investment opportunity set and eventually achieves substantially higher wealth, the momentum of this outperformance is weaker at the start. Over time, however, the cumulative benefits of adapting consumption and portfolio decisions to the stochastic factor $(\beta_t)_t$ become increasingly important, leading to a persistent and widening wealth advantage over the Constant Agent.\\
\\
\indent Next, we illustrate the optimal control strategies $(\pi^*_t)_t$ and $(c^*_t)_t$ given by Proposition \ref{optimalstrategies} for different parameter choices. To visualize the controls effectively, we plot the optimal strategies as functions of wealth $X_t$, while fixing $\beta_t$ at its equilibrium level $\bar{\beta}$. It is important to note that, while the stochastic factor is held constant in these figures, the policy functions and underlying value functions are derived from the full dynamic model. Thus, the agent explicitly accounts for the stochastic evolution of $(\beta_t)_t$ throughout the optimization, and the plots represent a cross-section of this dynamic strategy.\\
\indent In Figure \ref{laborplots}, we compare the optimal control strategies $(\pi^*_t)_t$ and $(c^*_t)_t$ for different values of labor income $\ell\in\lbrace0.2,0.6,1\rbrace$, while we fix the other parameters as: $
r=0.03,\;  \delta=0.04, \; \gamma=1.5,  \;\kappa=0.25, \; \bar{\beta}=0.05, \; \sigma_\beta=0.03$, and $\sigma=0.18$.\\
\indent We observe that both consumption and risky investment are increasing in wealth $X_t$ and labor income $\ell$. This is consistent with standard economic intuition: higher levels of wealth and labor income increase the agent's total effective wealth, thereby relaxing the budget constraint. Consequently, the agent increases both consumption and their allocation to the risky asset.
 \begin{figure}[h]
    \centering
    \begin{subfigure}[h]{0.45\textwidth}
        \centering
        \includegraphics[width=\textwidth]{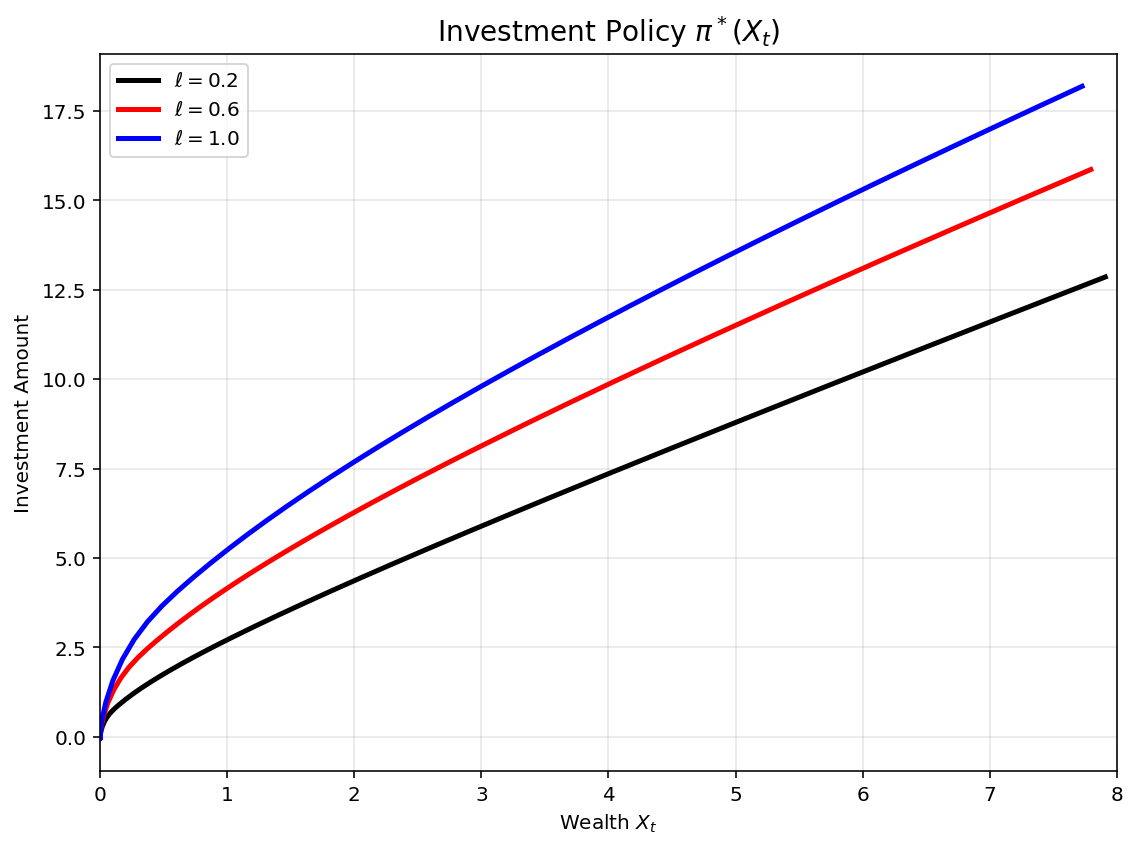}
        \caption{Optimal investment policy $\pi^*_t$ for different values of labor income $\ell$.}
    \end{subfigure}
    \hfill 
    \begin{subfigure}[h]{0.45\textwidth}
        \centering
        \includegraphics[width=\textwidth]{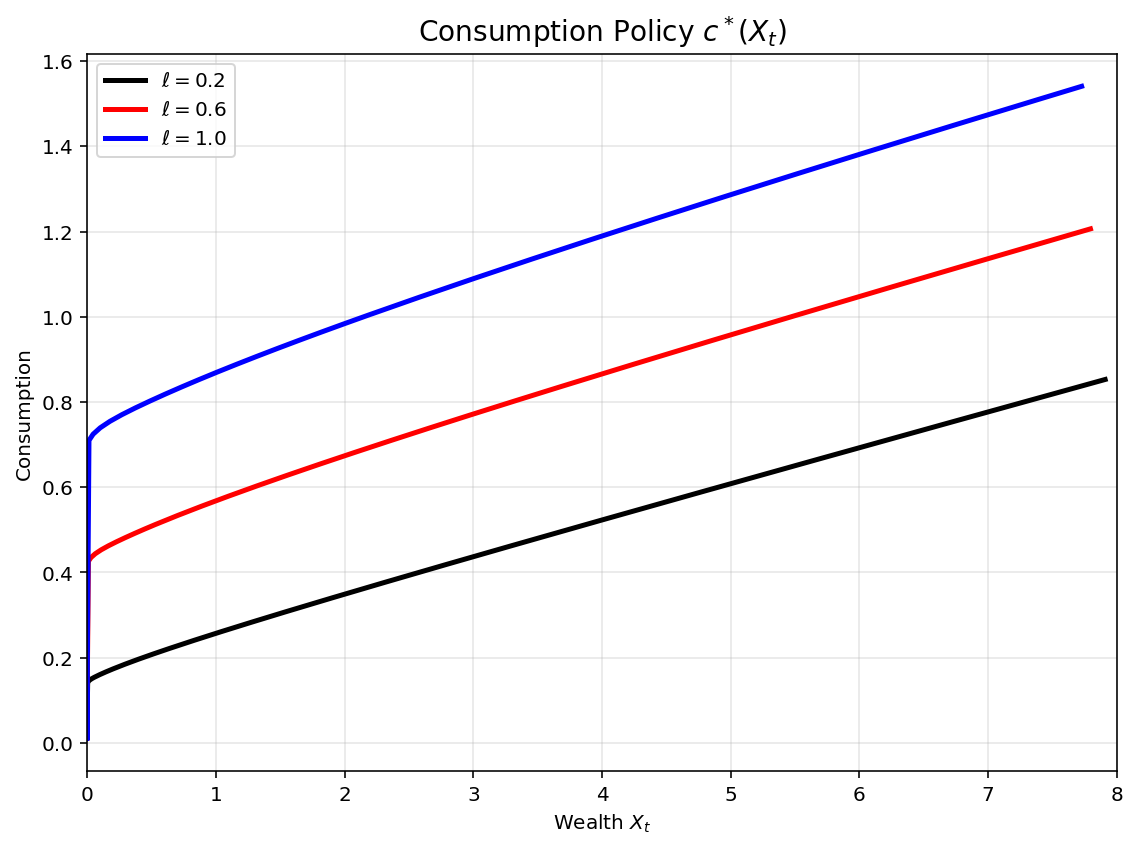}
        \caption{Optimal consumption policy $c^*_t$ for different values of labor income $\ell$.}
    \end{subfigure}
    
    \caption{Optimal policies for different values of labor income $\ell$.}
    \label{laborplots}
\end{figure}\\
\\
In Figure \ref{riskaversionplots}, we compare the optimal control strategies $(\pi^*_t)_t$ and $(c^*_t)_t$ for different values of risk aversion $\gamma\in\lbrace1.2,1.5,2\rbrace$, while we fix the other parameters as: $
r=0.03,\;  \delta=0.04,\;  \ell=0.6,\;  \kappa=0.25,\;  \bar{\beta}=0.05,  \;\sigma_\beta=0.03$, and $\sigma=0.18$.\\
\indent As before, we observe that both optimal consumption $(c^*_t)_t$ and risky investment $(\pi^*_t)_t$ are strictly increasing in wealth $X_t$ since the agent has more money to allocate. Regarding the effect of risk preferences, the risky investment $(\pi^*_t)_t$ is decreasing in the risk aversion parameter $\gamma$. This inverse relationship is consistent with Merton's classic results, reflecting that more risk-averse agents fear market volatility more and consequently reduce their exposure to the risky asset.\\
\indent Conversely, the consumption policy $(c^*_t)_t$ shifts upward as $\gamma$ increases. This behavior is driven by the Intertemporal Elasticity of Substitution (IES). Agents with lower risk aversion (higher IES) are more willing to postpone current consumption to capitalize on investment opportunities, whereas highly risk-averse agents (lower IES) invest less and choose to consume larger amounts of their current wealth.\\
\begin{figure}[h]
    \centering
    \begin{subfigure}[h]{0.45\textwidth}
        \centering
        \includegraphics[width=\textwidth]{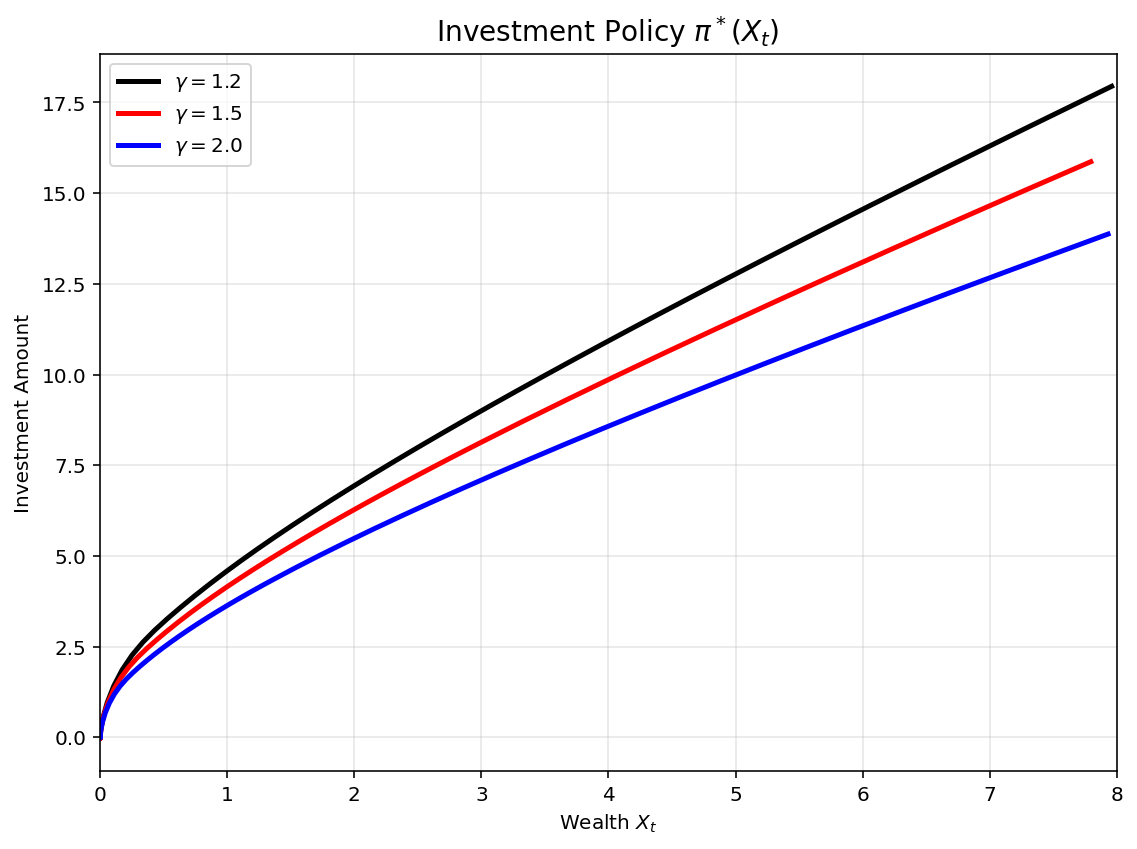}
        \caption{Optimal investment policy $\pi^*_t$ for different values of risk aversion $\gamma$.}
    \end{subfigure}
    \hfill 
    \begin{subfigure}[h]{0.45\textwidth}
        \centering
        \includegraphics[width=\textwidth]{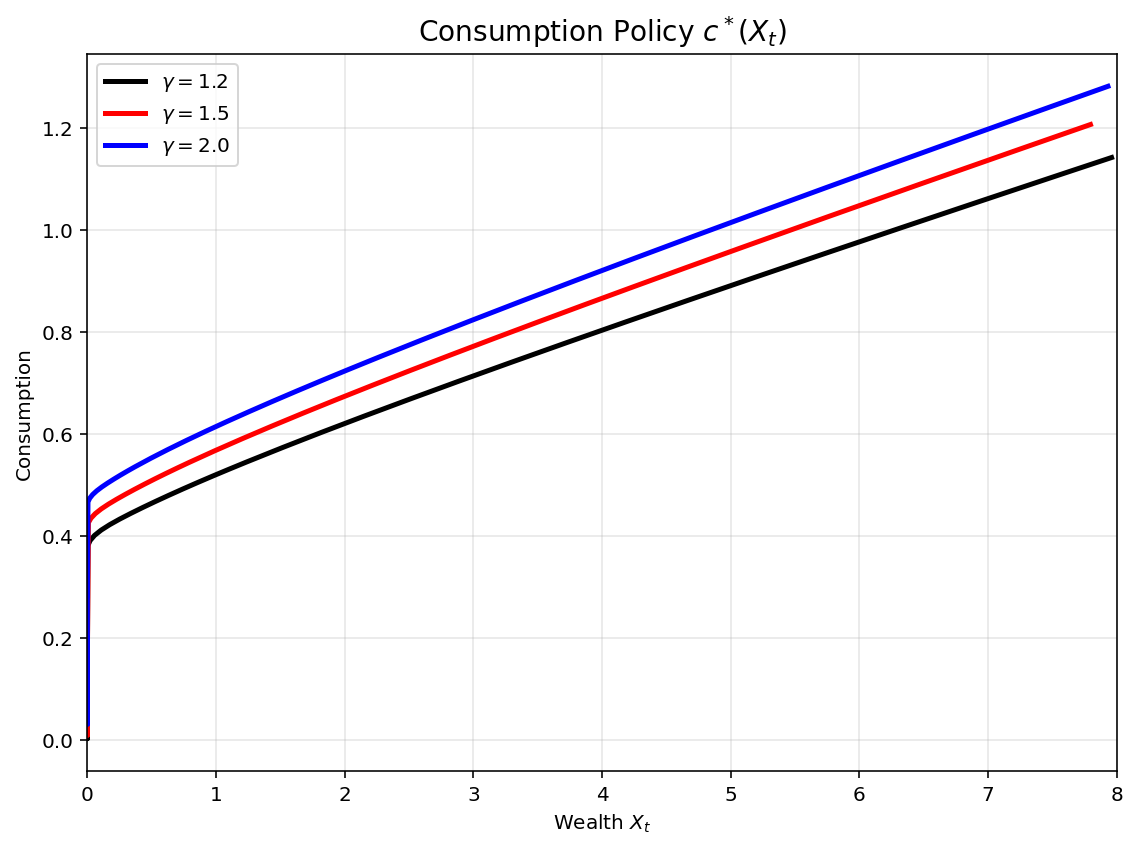}
        \caption{Optimal consumption policy $c^*_t$ for different values of risk aversion $\gamma$.}
    \end{subfigure}
    
    \caption{Optimal policies for different values of risk aversion $\gamma$.}
    \label{riskaversionplots}
\end{figure}\\
Continuing with our cross-sectional approach, Figure \ref{betaplots} compares the optimal control policies $(\pi^*_t)_t$ and $(c^*_t)_t$ for different values of expected excess returns $\beta_t\in\lbrace0.02,0.05,0.12\rbrace$, while we fix the other parameters as: $
r=0.03,\;  \delta=0.04,\;  \ell=0.6,\; \gamma=1.5,\;  \kappa=0.25,\;  \bar{\beta}=0.05, \;\sigma_\beta=0.03$, and $\sigma=0.18$.\\
\indent We observe that the investment policy $(\pi_t^*)_t$ is strictly increasing in the expected excess return $\beta_t$. When the risk premium is high, the agent aggressively leverages the portfolio to capitalize on favorable investment opportunities. Conversely, when the premium is low, the agent substantially reduces exposure to the risky asset.\\
\indent The consumption policy $(c_t^*)_t$ displays a non-monotonic relationship with $\beta_t$. At low wealth levels ($X_t < 3.5$), consumption decreases as $\beta_t$ increases. In this region, the agent seeks to accumulate wealth more rapidly. Since a higher $\beta_t$ implies more profitable investment opportunities, the agent optimally cuts current consumption in order to finance larger risky positions. At higher wealth levels, however, the agent has sufficient capital to fully exploit the high expected returns. This relaxes the need for aggressive saving and allows consumption to rise, eventually exceeding the level observed in the low-$\beta_t$ state.

\begin{figure}[h]
    \centering
    \begin{subfigure}[h]{0.45\textwidth}
        \centering
        \includegraphics[width=\textwidth]{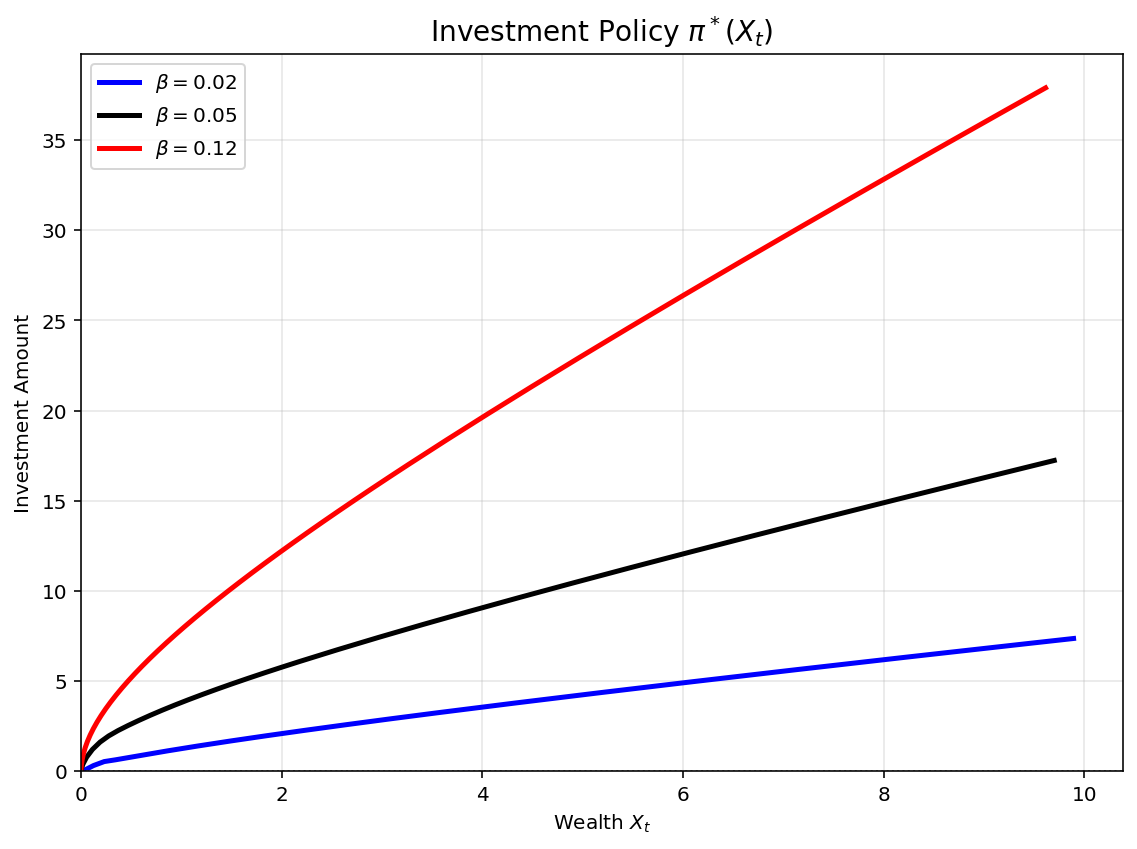}
        \caption{Cross-sections of the optimal investment policy $\pi^*_t$ for different fixed realizations of the expected excess return $\beta_t$.}
    \end{subfigure}
    \hfill 
    \begin{subfigure}[h]{0.45\textwidth}
        \centering
        \includegraphics[width=\textwidth]{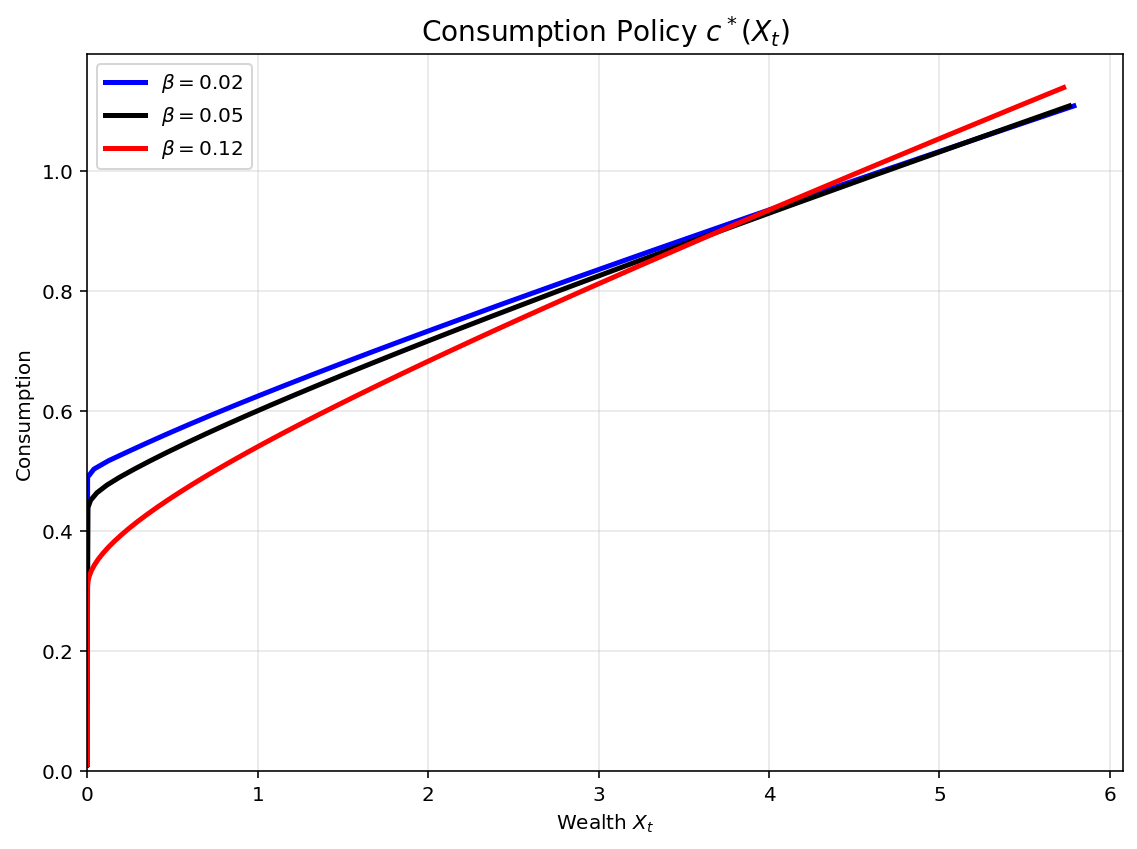}
        \caption{Cross-sections of the optimal consumption policy $c^*_t$ for different fixed realizations of the expected excess return $\beta_t$.}
    \end{subfigure}
    
    \caption{Cross-sections of the optimal policies for different fixed realizations of the expected excess return $\beta_t$.}
    \label{betaplots}
\end{figure}
\section{Conclusions}
\noindent
In this paper, we have studied the infinite-horizon consumption and portfolio problem of an investor subject to a no-borrowing constraint and labor income in the complete-market case of the Kim--Omberg model, where expected excess returns follow a mean-reverting Ornstein--Uhlenbeck process and are perfectly negatively correlated with the risky asset return shocks. Using a Lagrange duality approach, we have formulated the dual problem as a two-dimensional singular control problem involving the marginal value of wealth and the stochastic factor. The solution is governed by an auxiliary optimal stopping problem, which features a free boundary separating the continuation and stopping regions. We have provided a detailed probabilistic analysis for this optimal stopping problem and established properties of the free boundary and of the value function. Finally, we have retrieved the solutions to the primal optimization problem via duality and provided numerical illustrations.\\
\indent An interesting direction is the extension to the incomplete-market case. In that setting, the duality approach developed here is no longer directly applicable, as our proof of strong duality would break down, and a different mathematical approach, likely involving the associated HJB equation and verification arguments, would be needed. Such a challenging extension is left for future research.\\
\\ 
\noindent
\textbf{Acknowledgements.} Financial support by the German Research Foundation (DFG) [RTG\\
2865/1 - 492988838] is gratefully acknowledged.

{\normalsize
\appendix
\section{Technical Proofs}
\subsection{Proof of Proposition \ref{measurechange} }\label{proofmeasurechange}
\begin{proof}
Since $(M_t)_t$ is a martingale, we can fix a finite time horizon $T\geq 0$ and define the new measure \(\mathbb{\bar{P}}\) on $(\Omega,\mathcal{F})$ via
\[
\frac{d\mathbb{\bar{P}}}{d\mathbb{P}}\bigg|_{\mathcal{F}^W_t} = M_t,\quad t\in[0,T].
\] Using Girsanov's Theorem, we introduce a new standard Brownian motion \((W^\mathbb{\bar{P}}_t)_t\) under \(\mathbb{\bar{P}}\) such that \(dW_t^\mathbb{\bar{P}} = dW_t + \frac{\beta_t}{\sigma}\,dt\). Then, the dynamics of the state processes \((Z^{1}_t)_t\) and \((\beta_t)_t\) are given under $\bar{\mathbb{P}}$ by
\begin{align*}
dZ^1_t &= -\frac{\beta_t}{\sigma}Z^1_t\,dW^\mathbb{\bar{P}}_t + Z^1_t\bigg(\delta - r + \frac{\beta_t^2}{\sigma^2}\bigg)\,dt, \quad t>0,\quad Z^1_0=z>0, \\
d\beta_t &= -\sigma_\beta\,dW_t^{\mathbb{\bar{P}}} + \bigg(\kappa(\overline{\beta} - \beta_t) +\,\frac{\beta_t}{\sigma}\sigma_\beta\bigg)\,dt,\quad t>0, \quad\beta_0=\beta\in\mathbb{R}.
\end{align*} Under $\mathbb{\bar{P}}$, we then have for any $\mathbb{F}^W$-stopping time $\tau$: \begin{equation}\label{bar(P)eq}\mathbb{E}_{z,\beta}\bigg[\int_0^{\tau\wedge T} e^{-rt} M_t\,(\tilde{u}'(Z_t^{1}) + \ell)\,dt\bigg]= \mathbb{E}_{z,\beta}^{\mathbb{\bar{P}}}\bigg[\int_0^{\tau\wedge T} e^{-rt}\,(\tilde{u}'(Z^1_t) + \ell)\,dt\bigg],
\end{equation} where $\mathbb{E}^\mathbb{\bar{P}}_{z,\beta}[\cdot]$ denotes the expectation under $\mathbb{\bar{P}}_{z,\beta}(\cdot)=\mathbb{\bar{P}}(\;\cdot\;|\;Z^1_0=z,\;\beta_0=\beta)$. As the measure $\mathbb{\bar{P}}$ depends on $T$, we cannot directly let $T\to\infty$. To bypass this problem, we proceed as in \cite{callegaro2020optimal} (see also \cite{de2020optimal}). We observe that the coefficients of the SDEs for $Z^1$ and $\beta$ do not depend on the horizon $T$. Therefore, the law of the process $(Z^1, \beta)$ on $[0, T]$ under $\mathbb{\bar{P}}$ is consistent for any $T$. This allows us to introduce a new auxiliary probability space $(\hat{\Omega}, \hat{\mathcal{F}}, \mathbb{Q})$ equipped with a standard Brownian motion $(W^\mathbb{Q}_t)_t$, generating the filtration (completed by $\mathbb{Q}\text{-null sets of}\; \hat{\mathcal{F}}$) $\mathbb{F}^{W,\mathbb{Q}}:=(\mathcal{F}_t^{W,\mathbb{Q}})_t$. On this space, we let $(\hat{Z}_t)_t$ and $(\hat{\beta}_t)_t$ be the unique strong solutions to
\begin{align*}
d\hat{Z}_t &= -\frac{\hat{\beta}_t}{\sigma}\hat{Z}_t\,dW^\mathbb{Q}_t + \hat{Z}_t\bigg(\delta - r + \frac{\hat{\beta}_t^2}{\sigma^2}\bigg)\,dt, \quad t>0,\quad\hat{Z}_0=z>0, \\
d\hat{\beta}_t &= -\sigma_\beta\,dW^{\mathbb{Q}}_t + \bigg(\kappa(\overline{\beta} - \hat{\beta}_t) +\,\frac{\hat{\beta}_t}{\sigma}\sigma_\beta\bigg)\,dt,\quad t>0,\quad\hat{\beta}_0=\beta\in\mathbb{R}.
\end{align*} We then define the following value functions (where the infimum is taken over all $\mathbb{F}^{W,\mathbb{Q}}$-stopping times)
\[
v^\mathbb{Q}(z, \beta; T) := \inf_{\tau} \mathbb{E}^\mathbb{Q}_{z,\beta}\left[\int_0^{\tau \wedge T} e^{-rt} (\tilde{u}'(\hat{Z}_t) + \ell) \, dt \right],
\] and \[
v^\mathbb{Q}(z, \beta) := \inf_{\tau} \mathbb{E}^\mathbb{Q}_{z,\beta}\left[\int_0^{\tau} e^{-rt} (\tilde{u}'(\hat{Z}_t) + \ell) \, dt \right].
\]
By using the Dominated Convergence Theorem (whose application is justified by using arguments as those in the proof of the upcoming Proposition \ref{wellposedness}), we obtain
\begin{equation}
\label{eq:aux_convergence}
\lim_{T \to \infty} v^\mathbb{Q}(z, \beta; T) = v^\mathbb{Q}(z, \beta), \quad \lim_{T \to \infty} v(z,\beta;T) = v(z,\beta),
\end{equation} where we have set \[v(z,\beta;T):=\inf_{\tau} \mathbb{E}_{z,\beta}\bigg[\int_0^{\tau\wedge T} e^{-rt} M_t\,(\tilde{u}'(Z_t^{z,1}) + \ell)\,dt\bigg].\] Since now
\[
\inf_\tau \mathbb{E}_{z,\beta}^{\bar{\mathbb{P}}} \bigg[\int_0^{\tau\wedge T} e^{-rt}\,(\tilde{u}'(Z^1_t) + \ell)\,dt\bigg]
= \inf_\tau \mathbb{E}^{\mathbb{Q}}_{z,\beta} \bigg[\int_0^{\tau\wedge T} e^{-rt}\,(\tilde{u}'(\hat{Z}_t) + \ell)\,dt\bigg],
\]
the equivalence in law of the processes $(Z^1_t, \beta_t, W_t^{\bar{\mathbb{P}}}, W_t^{\beta,\bar{\mathbb{P}}})_t$ under $\bar{\mathbb{P}}$ and $(\hat{Z}_t, \hat{\beta}_t, W_t^\mathbb{Q}, W_t^{\beta,\mathbb{Q}})_t$ under $\mathbb{Q}$ on $[0,T]$, and (\ref{bar(P)eq}) allow us to write 
\[
v^\mathbb{Q}(z, \beta) = \lim_{T\to\infty} v^\mathbb{Q}(z, \beta;T) = \lim_{T\to\infty} v(z,\beta;T) = v(z,\beta),
\] which implies
\begin{equation*}v(z,\beta)=\inf_{\tau} \mathbb{E}_{z,\beta}^{\mathbb{Q}}\bigg[\int_0^\tau e^{-rt}\,(\tilde{u}'(\hat{Z}_t) + \ell)\,dt\bigg].\end{equation*}
\end{proof}
\subsection{Proof of Proposition \ref{wellposedness}}
\begin{proof}\label{proofwellposedness}
It follows from (\ref{legendre}) that $\tilde{u}'(\hat{Z}_t)=-\hat{Z}^{-\frac{1}{\gamma}}_t$. Therefore,
\[
\mathbb{E}^\mathbb{Q}_{z,\beta}\bigg[\int_0^\infty e^{-rt}|-\hat{Z}^{-\frac{1}{\gamma}}_t+\ell|dt\bigg]
\leq
\mathbb{E}^\mathbb{Q}_{z,\beta}\bigg[\int_0^\infty e^{-rt}\hat{Z}^{-\frac{1}{\gamma}}_tdt\bigg]
+\ell\int_0^\infty e^{-rt}dt.
\]
Hence, in order to prove that $v(z,\beta)\in\mathbb{R}$ it suffices to show that
\[
\mathbb{E}^\mathbb{Q}_{z,\beta}\bigg[\int_0^\infty e^{-rt}\hat{Z}^{-\frac{1}{\gamma}}_tdt\bigg]<\infty.
\]
By Fubini-Tonelli Theorem, we may write
\[
\mathbb{E}^\mathbb{Q}_{z,\beta}\bigg[\int_0^\infty e^{-rt}\hat{Z}^{-\frac{1}{\gamma}}_tdt\bigg]
=\int_0^\infty e^{-rt}\mathbb{E}^\mathbb{Q}_{z,\beta}[\hat{Z}_t^{-\frac{1}{\gamma}}]\,dt.
\]
By using the explicit representation of the strong solution to (\ref{hatZ}), we find
\begin{align*}
&\mathbb{E}^\mathbb{Q}_{z,\beta}[\hat{Z}_t^{-\frac{1}{\gamma}}]
=z^{-\frac{1}{\gamma}}\mathbb{E}^\mathbb{Q}_{1,\beta}\bigg[\exp\bigg(-\frac{1}{\gamma}\int_0^t (\delta-r+\tfrac{1}{2}\tfrac{\hat{\beta}_s^2}{\sigma^2})ds
+\frac{1}{\gamma}\int_0^t \tfrac{\hat{\beta}_s}{\sigma}dW_s^{\mathbb{Q}}\bigg)\bigg]\\
&=z^{-\frac{1}{\gamma}}\mathbb{E}^\mathbb{Q}_{1,\beta}\bigg[\exp\bigg(-\frac{1}{\gamma}(\delta-r)t-\tfrac{1}{2\sigma^2\gamma}\int_0^t\hat{\beta}_s^2ds+\frac{1}{\gamma}\int_0^t \tfrac{\hat{\beta}_s}{\sigma}dW_s^{\mathbb{Q}}\bigg)\bigg]\\
&=z^{-\frac{1}{\gamma}}\exp\big(-\frac{1}{\gamma}(\delta-r)t\big)\cdot\\
&\quad \cdot\mathbb{E}^\mathbb{Q}_{1,\beta}\bigg[\exp\bigg(-\frac{1}{2\sigma^2\gamma}\int_0^t\hat{\beta}_s^2ds
+\tfrac{1}{\sigma\gamma}\int_0^t \hat{\beta}_sdW^{\mathbb{Q}}_s
-\tfrac{1}{2}\tfrac{ 1}{\sigma^2\gamma^2}\int_0^t\hat{\beta}_s^2ds
+\tfrac{1}{2}\tfrac{ 1}{\sigma^2\gamma^2}\int_0^t\hat{\beta}_s^2ds\bigg)\bigg]\\
&=z^{-\frac{1}{\gamma}}\exp\big(-\frac{1}{\gamma}(\delta-r)t\big)\\
&\quad\cdot\mathbb{E}^\mathbb{Q}_{1,\beta}\bigg[\exp\bigg(\tfrac{1-\gamma}{2\sigma^2\gamma^2}\int_0^t\hat{\beta}_s^2ds\bigg)
\exp\bigg(\tfrac{1}{\sigma\gamma}\int_0^t \hat{\beta}_sdW^{\mathbb{Q}}_s
-\tfrac{1}{2}\tfrac{ 1}{\sigma^2\gamma^2}\int_0^t\hat{\beta}_s^2ds\bigg)\bigg].
\end{align*}
The process
\[
N_t:=\exp\bigg(\tfrac{1}{\sigma\gamma}\int_0^t \hat{\beta}_sdW^{\mathbb{Q}}_s
-\tfrac{1}{2}\tfrac{ 1}{\sigma^2\gamma^2}\int_0^t\hat{\beta}_s^2ds\bigg)
\]
defines a martingale under $\mathbb{Q}$ by Assumption \ref{Novikov_OST}. Hence, Girsanov’s Theorem allows us to define a new probability measure by
\[
\frac{d\mathbb{Q'}}{d\mathbb{Q}}\bigg|_{\mathcal{F}^{W,\mathbb{Q}}_t}=N_t,
\] and to obtain, as $\gamma>1$,
\begin{align*}\label{boundforZ}
\mathbb{E}^\mathbb{Q}_{z,\beta}[\hat{Z}_t^{-\frac{1}{\gamma}}]
&=z^{-\frac{1}{\gamma}}\exp\big({-\frac{1}{\gamma}}(\delta-r)t\big)
\mathbb{E}^\mathbb{Q'}_{1,\beta}\bigg[\exp\bigg(\bigg(\tfrac{1-\gamma}{2\sigma^2 \gamma^2}\bigg)\int_0^t\hat{\beta}_s^2ds\bigg)\bigg]\notag\\
&\leq z^{-\frac{1}{\gamma}}\exp\bigg(-\tfrac{1}{\gamma}(\delta-r)t\bigg),
\end{align*}
which proves (\ref{E(Z)ineq}). Overall,
\begin{equation}\label{finalexpression}
\mathbb{E}^\mathbb{Q}_{z,\beta}\bigg[\int_0^\infty e^{-rt}\hat{Z}^{{-\frac{1}{\gamma}}}_tdt\bigg]
=\int_0^\infty e^{-rt}\mathbb{E}^\mathbb{Q}_{z,\beta}[\hat{Z}_t^{-\frac{1}{\gamma}}]dt
\leq z^{-\frac{1}{\gamma}}\int_0^\infty e^{-(r+\frac{1}{\gamma}(\delta-r))t}\, dt<\infty,
\end{equation}
since $r+\frac{1}{\gamma}(\delta-r)>0$ as $\gamma>1$.
\end{proof}
\subsection{Proof of Proposition \ref{bounds}}\label{proofbounds}
\begin{proof}
Taking the suboptimal stopping time $\tau=0$ in (\ref{OST_new}) clearly yields $v(z,\beta)\leq 0$ for all $(z,\beta)$ on $\mathcal{O}$. As for the lower bound, using that $\tilde{u}'(\hat{Z}_t)=-\hat{Z}_t^{-\frac{1}{\gamma}}$ by (\ref{legendre}), we have for any $(z,\beta)\in\mathcal{O}$ and any stopping time $\tau$
\begin{align*}
\mathbb{E}^\mathbb{Q}_{z,\beta}\bigg[\int_0^\tau e^{-rt} (\tilde{u}'(\hat{Z}_t) + \ell) dt \bigg]
&\ge -\mathbb{E}^\mathbb{Q}_{z,\beta}\bigg[\int_0^\tau e^{-rt} \hat{Z}_t^{-\frac{1}{\gamma}} dt \bigg] \\
&\ge -\mathbb{E}^\mathbb{Q}_{z,\beta}\bigg[\int_0^\infty e^{-rt} \hat{Z}_t^{-\frac{1}{\gamma}} dt \bigg]>-\infty,
\end{align*}
where the last inequality is due (\ref{E(Z)ineq}) and the fact that $r+\frac{1}{\gamma}(\delta-r)>0$ as $\gamma>1$. Arbitrariness of $\tau\geq 0$ then implies the result.
\end{proof}
\subsection{Proof of Proposition \ref{limits}}\label{prooflimits}
\begin{proof}
We have
\begin{equation}\label{limitineq}
0 \geq \lim_{z\to\infty}v(z,\beta) \geq \lim_{z\to \infty}\left( - z^{-\frac{1}{\gamma}} \mathbb{E}^\mathbb{Q}_{1,\beta}\bigg[\int_0^\infty e^{-rt} \hat{Z}_t^{-\frac{1}{\gamma}} dt\bigg]\right)=0,
\end{equation}
where we have used that the expectation in the right-hand side of (\ref{limitineq}) does not depend on $z$. Similarly, one also obtains
\begin{align*}
\lim_{z\to 0}v(z,\beta) &\leq \lim_{z\to 0}\mathbb{E}^\mathbb{Q}_{z,\beta}\bigg[\int_0^\infty e^{-rt}(-\hat{Z}_t^{-\frac{1}{\gamma}} + \ell) dt \bigg]\\
&= \lim_{z\to 0}\left(- z^{-\frac{1}{\gamma}} \mathbb{E}^\mathbb{Q}_{1,\beta}\bigg[\int_0^\infty e^{-rt} \hat{Z}_t^{-\frac{1}{\gamma}} dt \bigg] + \frac{\ell}{r}\right)= -\infty.
\end{align*}
\end{proof}
\subsection{Proof of Proposition \ref{ProbRepr}}\label{proofprobrepr}
\begin{proof}
The proof is divided into two steps and borrows arguments from \cite{MR3904414}.\\
\\
\indent\textbf{Step 1.} We first show that the value function $v$ is (locally) Lipschitz continuous in the $z$-variable and derive the probabilistic representation for the weak derivative $v_z$. To that end,  we fix $(z,\beta)\in\mathcal{O}$ and $\varepsilon>0$, and let $\tau^*$ be the optimal stopping time for the problem with initial data $(z,\beta)$ (independent of $\varepsilon$).\\
\indent For the purpose of showing the Lipschitz property, we may restrict to $\varepsilon \le \varepsilon_0$ with $\varepsilon_0 \in (0,1)$ such that $z-\varepsilon>0$. Using $\tilde{u}'(\hat{Z}_t)=-\hat{Z}_t^{-\frac{1}{\gamma}}$ and the Mean-Value Theorem, we have
\begin{align}\label{zlipbound}
|v(z+\varepsilon,\beta) - v(z,\beta)| &\le \mathbb{E}^\mathbb{Q}\Big[\int_0^\infty e^{-rt} (\hat{Z}^{1,\beta}_t)^{-\frac{1}{\gamma}} | z^{-\frac{1}{\gamma}} - (z+\varepsilon)^{-\frac{1}{\gamma}} | dt \Big] \notag\\
&= | z^{-\frac{1}{\gamma}} - (z+\varepsilon)^{-\frac{1}{\gamma}} | \, \mathbb{E}^\mathbb{Q}\Big[\int_0^\infty e^{-rt} (\hat{Z}^{1,\beta}_t)^{-\frac{1}{\gamma}} dt \Big] \\
&= \frac{1}{\gamma} \zeta^{-\frac{1}{\gamma}-1} \, \varepsilon \, \mathbb{E}^\mathbb{Q}\Big[\int_0^\infty e^{-rt} (\hat{Z}^{1,\beta}_t)^{-\frac{1}{\gamma}} dt \Big],\notag
\end{align}
for some $\zeta \in [z, z+\varepsilon]$. Upon setting
\[
c(z,\beta) := \frac{1}{\gamma} z^{-\frac{1}{\gamma}-1} \, \mathbb{E}^\mathbb{Q}\Big[\int_0^\infty e^{-rt} (\hat{Z}^{1,\beta}_t)^{-\frac{1}{\gamma}} dt \Big],
\]
we thus obtain from (\ref{zlipbound})
\[
|v(z+\varepsilon,\beta) - v(z,\beta)| \le c(z,\beta) \, \varepsilon,
\]
and by symmetry,
\[
|v(z,\beta) - v(z-\varepsilon,\beta)| \le c(z,\beta) \, \varepsilon.
\]
Since $c(z,\beta) > 0$ is finite by Proposition \ref{wellposedness} and can be taken uniformly on compact sets, we conclude that $v(\cdot,\beta)$ is (locally) Lipschitz continuous in $z$.\\
\indent In order to derive the probabilistic representation of the weak derivative $v_z$, we note that $\tau^*$ is suboptimal for $v(z+\varepsilon,\beta)$, and thus we have
\begin{align*}
v(z+\varepsilon,\beta) - v(z,\beta) &\le \mathbb{E}^\mathbb{Q}\Big[\int_0^{\tau^*} e^{-rt} (\hat{Z}^{1,\beta}_t)^{-\frac{1}{\gamma}} \big( z^{-\frac{1}{\gamma}} - (z+\varepsilon)^{-\frac{1}{\gamma}} \big) dt \Big] \\
&= \big( z^{-\frac{1}{\gamma}} - (z+\varepsilon)^{-\frac{1}{\gamma}} \big) \mathbb{E}^\mathbb{Q}\Big[\int_0^{\tau^*} e^{-rt} (\hat{Z}^{1,\beta}_t)^{-\frac{1}{\gamma}} dt \Big].
\end{align*}
Dividing by $\varepsilon$ and then letting $\varepsilon \to 0$ yields
\begin{equation}\label{limsup_z}
\limsup_{\varepsilon \to 0} \frac{v(z+\varepsilon,\beta) - v(z,\beta)}{\varepsilon} \le \mathbb{E}^\mathbb{Q}\Big[\int_0^{\tau^*} e^{-rt} \frac{1}{\gamma} z^{-1} (\hat{Z}^{z,\beta}_t)^{-\frac{1}{\gamma}} dt \Big].
\end{equation}
A symmetric argument applied to $v(z,\beta) - v(z-\varepsilon,\beta)$ gives the reverse inequality
\begin{equation}\label{liminf_z}
\liminf_{\varepsilon \to 0} \frac{v(z,\beta) - v(z-\varepsilon,\beta)}{\varepsilon} \ge \mathbb{E}^\mathbb{Q}\Big[\int_0^{\tau^*} e^{-rt} \frac{1}{\gamma} z^{-1} (\hat{Z}^{z,\beta}_t)^{-\frac{1}{\gamma}} dt \Big].
\end{equation}
Hence, since $v$ is (locally) Lipschitz continuous in $z$, it follows from (\ref{limsup_z}) and (\ref{liminf_z}) that for any point $z$ of differentiability (belonging to a set of full measure) the weak derivative $v_z$ is given by
\[
v_z(z,\beta) = \mathbb{E}^\mathbb{Q}\Big[\int_0^{\tau^*} e^{-rt} \frac{1}{\gamma} z^{-1} (\hat{Z}^{z,\beta}_t)^{-\frac{1}{\gamma}} dt \Big].
\]
\\  
\textbf{Step 2.} In this step, we show that $v$ is (locally) Lipschitz continuous in the $\beta$-variable and derive the probabilistic representation for the weak derivative $v_\beta$. Again, let $(z,\beta)\in\mathcal{O}$, and to simplify notation, we set 
\[
a := \kappa - \frac{\sigma_\beta}{\sigma} > 0 \quad \text{(by Assumption \ref{assumption_novikov})}, \quad b := \frac{\kappa \overline{\beta}}{a},
\] so that the unique strong solution to (\ref{hatbeta}) can be written as
\begin{equation}\label{expressionbeta}
\hat{\beta}^\beta_t = \beta e^{-at} + b(1 - e^{-at}) - \sigma_\beta \int_0^t e^{-a(t-s)} dW_s^{\mathbb{Q}},
\end{equation}
and it readily follows that $\frac{\partial \hat{\beta}^\beta_t}{\partial \beta} = e^{-at}$. Since
\[
\hat{Z}^{z,\beta}_t = z \exp\Bigg(\int_0^t \big(\delta - r + \frac{1}{2} \frac{(\hat{\beta}^\beta_s)^2}{\sigma^2}\big) ds - \int_0^t \frac{\hat{\beta}^\beta_s}{\sigma} dW_s^{\mathbb{Q}}\Bigg),
\] 
we have by Theorem $V.7.39$ in \cite{protter2012stochastic}
\begin{equation}\label{protter}
\frac{\partial \hat{Z}_t}{\partial \beta} = \hat{Z}_t \Bigg(\int_0^t e^{-as} \frac{\hat{\beta}_s}{\sigma^2} ds - \frac{1}{\sigma} \int_0^t e^{-as} dW_s^{\mathbb{Q}} \Bigg),
\end{equation}
upon using that, due to (\ref{expressionbeta}), one has almost surely
\[
\partial_\beta \Big(\int_0^t \frac{\hat{\beta}^\beta_s}{\sigma} dW_s^{\mathbb{Q}}\Big) 
:= \lim_{\varepsilon \to 0} \frac{\int_0^t \frac{\hat{\beta}_s^{\beta+\varepsilon}}{\sigma} dW_s^{\mathbb{Q}} - \int_0^t \frac{\hat{\beta}_s^\beta}{\sigma} dW_s^{\mathbb{Q}}}{\varepsilon} 
= \int_0^t \frac{e^{-as}}{\sigma} dW_s^{\mathbb{Q}}.
\]
\indent As in Step 1, we may restrict to $\varepsilon \le \varepsilon_0$ for some $\varepsilon_0\in(0,1)$, and by the Mean-Value Theorem and (\ref{protter}), we have
\begin{align*}
\big|v(z,\beta+\varepsilon) - v(z,\beta)\big| 
&\le \int_0^\infty e^{-rt} \mathbb{E}^\mathbb{Q}\Big[ \big| (\hat{Z}_t^{z,\beta})^{-\frac{1}{\gamma}} - (\hat{Z}_t^{z,\beta+\varepsilon})^{-\frac{1}{\gamma}} \big| \Big] dt \\
&= \frac{\varepsilon}{\gamma} \int_0^\infty e^{-rt} \mathbb{E}^\mathbb{Q}\Big[ (\hat{Z}_t^{z,\beta_\varepsilon})^{-\frac{1}{\gamma}} \Big| \int_0^t e^{-as} \frac{\hat{\beta}_s^{\beta_\varepsilon}}{\sigma^2} ds - \frac{1}{\sigma} \int_0^t e^{-as} dW_s^{\mathbb{Q}} \Big| \Big] dt,
\end{align*}
where $\beta_\varepsilon \in (\beta, \beta+\varepsilon)$. \\
Let \[1<p<\min\bigg\{\gamma, \frac{\gamma\sigma\bigg(\kappa-\frac{ \sigma_\beta}{\sigma}\bigg)}{\sigma_\beta}\bigg\}\] and $q>1$ with $\frac{1}{p}+\frac{1}{q}=1$. H\"older's inequality and (\ref{E(Z)ineq}) then yield
\begin{align}\label{lipbetaeq}
\big|v(z,\beta+\varepsilon) - v(z,\beta)\big| 
&\le \frac{\varepsilon}{\gamma} \int_0^\infty e^{-rt} \mathbb{E}^\mathbb{Q}[(\hat{Z}_t^{z,\beta_\varepsilon})^{-\frac{p}{\gamma}}]^{1/p} 
\mathbb{E}^\mathbb{Q}\Big[\Big| \int_0^t e^{-as} \frac{\hat{\beta}_s^{\beta_\varepsilon}}{\sigma^2} ds - \frac{1}{\sigma} \int_0^t e^{-as} dW_s^{\mathbb{Q}} \Big|^q \Big]^{1/q} dt \notag\\
&\le \frac{\varepsilon}{\gamma} z^{-\frac{1}{\gamma}} \int_0^\infty e^{(-\frac{1}{\gamma}(\delta-r) - r)t} \mathbb{E}^\mathbb{Q}\Big[\Big| \int_0^t e^{-as} \frac{\hat{\beta}_s^{\beta_\varepsilon}}{\sigma^2} ds - \frac{1}{\sigma} \int_0^t e^{-as} dW_s^{\mathbb{Q}} \Big|^q \Big]^{1/q} dt.
\end{align}
\indent We define $\phi := \frac{1}{\gamma} (\delta-r) + r > 0$ and $A^\varepsilon_t := \int_0^t e^{-as} \frac{\hat{\beta}_s^{\beta_\varepsilon}}{\sigma^2} ds - \frac{1}{\sigma} \int_0^t e^{-as} dW_s^{\mathbb{Q}}$, so that the inequality (\ref{lipbetaeq}) then reads as
\begin{equation}\label{A_tineq}\big|v(z,\beta+\varepsilon) - v(z,\beta)\big|\leq\frac{\varepsilon}{\gamma} z^{-\frac{1}{\gamma}} \int_0^\infty e^{-\phi t} \norm{A^\varepsilon_t}_{L^q} dt,\end{equation} where we have set $\norm{\cdot}_{L^q}=\mathbb{E}[|\cdot|^q]^{\frac{1}{q}}$.
We now decompose $A^\varepsilon_t$ into a deterministic part $G^\varepsilon_t$ and a stochastic part $B_t$. That is, \begin{equation}\label{defA}A^\varepsilon_t=G^\varepsilon_t+B_t,\end{equation} with
\begin{align}
G^\varepsilon_t &:= \frac{1}{\sigma^2} \int_0^t e^{-as} (\beta_\varepsilon e^{-as} + b(1 - e^{-as})) ds, \label{deterministic}\\ 
B_t &:= -\frac{\sigma_\beta}{\sigma^2} \int_0^t e^{-as} \Big( \int_0^s e^{-a(s-u)} dW_u^{\mathbb{Q}} \Big) ds - \frac{1}{\sigma} \int_0^t e^{-as} dW_s^{\mathbb{Q}}.\label{stochastic}
\end{align}
A direct calculation yields \[G^\varepsilon_t=\frac{1}{\sigma^2}\bigg(\frac{\beta_{\varepsilon}-b}{2a}(1-e^{-2at})+\frac{b}{a}(1-e^{-at})\bigg),\] which implies
\begin{equation}\label{D_tbound}
|G^\varepsilon_t| \leq \frac{1}{\sigma^2a}\bigg(\frac{|\beta_{\varepsilon}-b|}{2}+|b|\bigg)\leq \frac{1}{\sigma^2a}\left(\frac{|\beta|+1+|b|}{2}+|b|\right)=:C_G(\beta),
\end{equation} with $C_G(\beta)$ independent of $\varepsilon>0$. For $B_t$, an application of Stochastic Fubini Theorem (cf., e.g., Theorem $65$ in section IV of \cite{protter2012stochastic}) leads to
\begin{align}
B_t &= -\frac{\sigma_\beta}{\sigma^2} \int_0^t \left( \int_u^t e^{-as} e^{-a(s-u)} ds \right) dW_u^{\mathbb{Q}} - \frac{1}{\sigma} \int_0^t e^{-au} dW_u^{\mathbb{Q}} \notag\\
&= -\frac{\sigma_\beta}{2a\sigma^2} \int_0^t \left( e^{-au} - e^{-2at + au} \right) dW_u^{\mathbb{Q}} - \frac{1}{\sigma} \int_0^t e^{-au} dW_u^{\mathbb{Q}} \label{B}\\
&= \int_0^t \underbrace{\left( -\frac{\sigma_\beta }{2a\sigma^2} (e^{-au} - e^{-2at + au}) - \frac{1}{\sigma} e^{-au} \right)}_{=: f(u,t)} dW_u^{\mathbb{Q}}\notag.
\end{align}
The deterministic function $f(u,t)$ in (\ref{B}) can be bounded as follows
\[
|f(u,t)| \le e^{-au} \left( \frac{\sigma_\beta }{2a\sigma^2} + \frac{1}{\sigma} \right) =: C_f e^{-au},\]
and hence, we obtain
\begin{align}\label{Var(S_t)}
\mathrm{Var}(B_t) &= \int_0^t f(u,t)^2 du \le \frac{C_f^2 }{2a} =: K.
\end{align}
Upon noticing that $B_t$ is Gaussian with mean $0$ and variance $\mathrm{Var}(B_t) \le K$, we may write $B_t = \sqrt{\mathrm{Var}(B_t)} Y$, where $Y \sim \mathcal{N}(0,1)$. Therefore, using (\ref{Var(S_t)}), we obtain
\begin{equation}\label{S_tbound}
\|B_t\|_{L^q} = \sqrt{\mathrm{Var}(B_t)} \|Y\|_{L^q} \le \sqrt{K} \|Y\|_{L^q} =: C_q.
\end{equation}
The fact that $Y\sim \mathcal{N}(0,1)$ implies $\|Y\|_{L^q} < \infty$ for all $q \ge 1$. Hence, (\ref{D_tbound}) and (\ref{S_tbound}) imply
\begin{equation}\label{A_tbound}
\|A^\varepsilon_t\|_{L^q} \le \|G^\varepsilon_t\|_{L^q} + \|B_t\|_{L^q} = |G^\varepsilon_t| + \|B_t\|_{L^q} \le C_G(\beta) + C_q =: M_q(\beta),
\end{equation}
where $M_q(\beta)$ is independent of $\varepsilon>0$. Inequalities (\ref{A_tineq}) and (\ref{A_tbound}) in turn lead to
\begin{align*}
\big|v(z,\beta+\varepsilon)-v(z,\beta)\big|
\le \frac{\varepsilon}{\gamma} z^{-\frac{1}{\gamma}} \int_0^\infty e^{-\phi t} \norm{A^\varepsilon_t}_{L^q} dt &\le \frac{\varepsilon}{\gamma} z^{-\frac{1}{\gamma}} M_q(\beta) \int_0^\infty e^{-\phi t} dt \\
&=\frac{M_q(\beta)}{\phi \gamma} z^{-\frac{1}{\gamma}} \varepsilon =: c(z,\beta) \varepsilon,
\end{align*}
where we have $0 < c(z,\beta) < \infty$. A symmetric argument also yields
\[
\big|v(z,\beta) - v(z,\beta-\varepsilon)\big| \le c(z,\beta) \varepsilon.
\]
Therefore, the value function $v(z,\cdot)$ is (locally) Lipschitz continuous in $\beta$, with a constant that can be taken uniform over compact sets.\\
\indent Next, we derive the probabilistic representation for the weak derivative $v_\beta$, which exists for almost every $\beta\in\mathbb{R}$. Suppose that $\beta$ is a point of differentiability and denote by $\tau^*$ the optimal 
stopping time for the problem with initial data $(z,\beta)$ (independent of $\varepsilon$). 
Since $\tau^*$ is suboptimal for $v(z,\beta+\varepsilon)$, we obtain
\begin{align*}
v(z,\beta+\varepsilon)-v(z,\beta)
\le \mathbb{E}^\mathbb{Q}\!\left[\int_{0}^{\tau^*} 
e^{-rt}\Big( (\hat Z_t^{z,\beta})^{-\frac{1}\gamma} - (\hat Z_t^{z,\beta+\varepsilon})^{-\frac{1}\gamma} \Big)\,dt\right].
\end{align*}
Dividing by $\varepsilon$ and using the Mean Value Theorem yields
\begin{equation}\label{ineqbetaprobrepr}
\frac{v(z,\beta+\varepsilon)-v(z,\beta)}{\varepsilon}
\le 
\mathbb{E}^\mathbb{Q}\!\left[\int_{0}^{\tau^*} e^{-rt}
\frac{1}{\gamma}(\hat Z_t^{z,\beta_\varepsilon})^{-\frac{1}\gamma}
\left(\int_0^t e^{-as}\frac{\hat\beta_s^{\beta_\varepsilon}}{\sigma^2}\,ds
      -\frac{1}{\sigma}\int_0^t e^{-as}\,dW_s^{\mathbb{Q}}\right)dt\right],
\end{equation}
where $\beta_\varepsilon\in(\beta,\beta+\varepsilon)$.  
Since \[\mathbb{E}^\mathbb{Q}\!\left[\int_{0}^{\infty} e^{-rt}
\frac{1}{\gamma}(\hat Z_t^{z,\beta_\varepsilon})^{-\frac{1}\gamma}
\left|\int_0^t e^{-as}\frac{\hat\beta_s^{\beta_\varepsilon}}{\sigma^2}\,ds
      -\frac{1}{\sigma}\int_0^t e^{-as}\,dW_s^{\mathbb{Q}}\right|dt\right]<\infty,\]which follows from the same arguments used in the proof of (local) Lipschitz continuity of $v$ in the $\beta$-variable, using Fubini's Theorem, the right-hand side of (\ref{ineqbetaprobrepr}) becomes 
\begin{equation}\label{integralbetaprobrepr}
 \int_0^\infty \mathbb{E}^\mathbb{Q}\!\left[
e^{-rt}\left(\frac{1}{\gamma}(\hat Z_t^{z,\beta_\varepsilon})^{-\frac{1}\gamma}
\left(\int_0^t e^{-as}\frac{\hat{\beta}_s^{\beta_\varepsilon}}{\sigma^2}ds
     -\frac{1}{\sigma}\int_0^t e^{-as}dW_s^{\mathbb{Q}}\right)\right)
\mathbf{1}_{\{t\le \tau^*\}}\right]dt . 
\end{equation}
\indent As before, we may restrict to $\varepsilon\le \varepsilon_0$ for some $\varepsilon_0\in (0,1)$. Again, exploiting the arguments from the proof of the Lipschitz continuity of $v$ in the $\beta$-variable, we can bound
\[
\left|\mathbb{E}^\mathbb{Q}\!\left[
e^{-rt}\left(\frac{1}{\gamma}(\hat Z_t^{z,\beta_\varepsilon})^{-\frac{1}\gamma}
\left(\int_0^t e^{-as}\frac{\hat{\beta}_s^{\beta_\varepsilon}}{\sigma^2}ds
     -\frac{1}{\sigma}\int_0^t e^{-as}dW_s^{\mathbb{Q}}\right)\right)
\mathbf{1}_{\{t\le \tau^*\}}\right]\right|
\]
by a function that is independent of $\varepsilon$ and Lebesgue-integrable over $(0,\infty)$. Therefore, Dominated 
Convergence Theorem implies \begin{align*}&\lim_{\varepsilon\to 0}\int_0^\infty \mathbb{E}^\mathbb{Q}\!\left[
e^{-rt}\left(\frac{1}{\gamma}(\hat Z_t^{z,\beta_\varepsilon})^{-\frac{1}\gamma}
\left(\int_0^t e^{-as}\frac{\hat{\beta}_s^{\beta_\varepsilon}}{\sigma^2}ds
     -\frac{1}{\sigma}\int_0^t e^{-as}dW_s^{\mathbb{Q}}\right)\right)
\mathbf{1}_{\{t\le \tau^*\}}\right]dt\\
&=\int_0^\infty \lim_{\varepsilon\to 0}\mathbb{E}^\mathbb{Q}\!\left[
e^{-rt}\left(\frac{1}{\gamma}(\hat Z_t^{z,\beta_\varepsilon})^{-\frac{1}\gamma}
\left(\int_0^t e^{-as}\frac{\hat{\beta}_s^{\beta_\varepsilon}}{\sigma^2}ds
     -\frac{1}{\sigma}\int_0^t e^{-as}dW_s^{\mathbb{Q}}\right)\right)
\mathbf{1}_{\{t\le \tau^*\}}\right]dt.\end{align*}
\indent Next, we show that the family \begin{equation}\label{UIfamily}\bigg\lbrace\left|(\hat Z_t^{z,\beta_\varepsilon})^{-1/\gamma}
\left(\int_0^t e^{-as}\frac{\hat{\beta}_s^{\beta_\varepsilon}}{\sigma^2}ds
      -\frac{1}{\sigma}\int_0^t e^{-as}dW_s^{\mathbb{Q}}\right)
\right| \mathbf{1}_{\{t\le \tau^*\}}\bigg\rbrace_{\varepsilon\in(0,\varepsilon_0)}\end{equation} is uniformly integrable. We let  \[1<\tilde{p}<\min\bigg\{\gamma, \frac{\gamma\sigma\bigg(\kappa-\frac{ \sigma_\beta}{\sigma}\bigg)}{\sigma_\beta}\bigg\},\] $m\in(1,\tilde p)$, and we define 
$p:=\frac{\tilde p}{m}>1$ and $q>1$ with $\frac{1}{p}+\frac{1}{q}=1$. 
Hölder's inequality then yields
\begin{align}\label{beta_lipschitzbound}
&\mathbb{E}^\mathbb{Q}\!\left[(\hat Z_t^{z,\beta_\varepsilon})^{-\frac{m}{\gamma}}
\left|\left(\int_0^t e^{-as}\frac{\hat{\beta}_s^{\beta_\varepsilon}}{\sigma^2}ds
           -\frac{1}{\sigma}\int_0^t e^{-as}dW_s^{\mathbb{Q}}\right)\right|^m\right]\notag \\
&\qquad\le 
\mathbb{E}^\mathbb{Q}\!\left[(\hat Z_t^{z,\beta_\varepsilon})^{-\frac{\tilde p}{\gamma}}\right]^\frac{1}{p}
\,
\mathbb{E}^\mathbb{Q}\!\left[\left|
\int_0^t e^{-as}\frac{\hat{\beta}_s^{\beta_\varepsilon}}{\sigma^2}ds
-\frac{1}{\sigma}\int_0^t e^{-as}dW_s^{\mathbb{Q}}\right|^{mq}\right]^\frac{1}{q}.
\end{align}
Now, recalling (\ref{defA}) and using the arguments used for the derivation of (\ref{A_tbound}), we find 
\begin{equation}\label{A_t^sbound}
\|A^\varepsilon_t\|_{L^{mq}}\le M_{mq}(\beta),
\end{equation} since $mq>1$, which is uniform in $\varepsilon$. Hence, combining (\ref{beta_lipschitzbound}), (\ref{A_t^sbound}) and (\ref{E(Z)ineq}) gives
\[
\sup_{\varepsilon\le \varepsilon_0}
\mathbb{E}^\mathbb{Q}\!\left[
\left|(\hat Z_t^{z,\beta_\varepsilon})^{-\frac{1}{\gamma}}
\left(\int_0^t e^{-as}\frac{\hat{\beta}_s^{\beta_\varepsilon}}{\sigma^2}ds
      -\frac{1}{\sigma}\int_0^t e^{-as}dW_s^{\mathbb{Q}}\right)
\right|^m \mathbf{1}_{\{t\le \tau^*\}}\right]
\leq 
z^{-\frac{m}{\gamma}}\,e^{-\frac{m}{\gamma}(\delta-r)t} M_{mq}(\beta)^m,
\]
and thus the uniform integrability of the family (\ref{UIfamily}) follows. Vitali’s Convergence Theorem, together with the continuities of $\beta\mapsto \hat{Z}^{z,\beta}$ and $\beta\mapsto \hat{\beta}^{\beta}$, then imply
\begin{align}\label{final}
&\lim_{\varepsilon\to0}\mathbb{E}^\mathbb{Q}\!\left[\int_0^{\tau^*} e^{-rt}
\left(\frac{1}{\gamma}(\hat Z_t^{z,\beta_\varepsilon})^{-\frac{1}{\gamma}}
\left(\int_0^t e^{-as}\frac{\hat{\beta}_s^{\beta_\varepsilon}}{\sigma^2}ds
     -\frac{1}{\sigma}\int_0^t e^{-as}dW_s^{\mathbb{Q}}\right)\right)dt\right] \notag\\
&\quad=
\mathbb{E}^\mathbb{Q}\!\left[\int_0^{\tau^*} e^{-rt}
\left(\frac{1}{\gamma}(\hat Z_t^{z,\beta})^{-\frac{1}{\gamma}}
\left(\int_0^t e^{-as}\frac{\hat{\beta}_s^{\beta}}{\sigma^2}ds
     -\frac{1}{\sigma}\int_0^t e^{-as}dW_s^{\mathbb{Q}}\right)\right)dt\right].
\end{align}
Using (\ref{final}) into (\ref{ineqbetaprobrepr}) gives
\begin{align}\label{limsupbeta}
&\limsup_{\varepsilon\to0}
\frac{v(z,\beta+\varepsilon)-v(z,\beta)}{\varepsilon}\notag\\ 
&\leq \mathbb{E}_{z,\beta}^\mathbb{Q}\left[\int_0^{\tau^*} e^{-rt}
\left(\frac{1}{\gamma}(\hat Z_t^{z,\beta})^{-\frac{1}{\gamma}}
\left(\int_0^t e^{-as}\frac{\hat{\beta}_s^{\beta}}{\sigma^2}ds
     -\frac{1}{\sigma}\int_0^t e^{-as}dW_s^{\mathbb{Q}}\right)\right)dt\right],
\end{align}
and, arguing symmetrically, also
\begin{align}\label{liminfbeta}
&\liminf_{\varepsilon\to0}
\frac{v(z,\beta)-v(z,\beta-\varepsilon)}{\varepsilon}
\notag\\
&\geq 
\mathbb{E}^\mathbb{Q}_{z,\beta}\left[\int_0^{\tau^*} e^{-rt}
\left(\frac{1}{\gamma}(\hat Z_t^{z,\beta})^{-\frac{1}{\gamma}}
\left(\int_0^t e^{-as}\frac{\hat{\beta}_s^{\beta}}{\sigma^2}ds
     -\frac{1}{\sigma}\int_0^t e^{-as}dW_s^{\mathbb{Q}}\right)\right)dt\right].
\end{align}
Hence, since $v$ is (locally) Lipschitz continuous in $\beta$, it follows from (\ref{limsupbeta}) and (\ref{liminfbeta}) that for almost every $\beta\in\mathbb{R}$ the weak derivative $v_\beta$ is given by
\begin{equation}\label{vbetaproof}
v_\beta(z,\beta)=\mathbb{E}^\mathbb{Q}_{z,\beta}\left[\int_0^{\tau^*} e^{-rt}
\left(\frac{1}{\gamma}(\hat Z_t^{z,\beta})^{-\frac{1}{\gamma}}
\left(\int_0^t e^{-as}\frac{\hat{\beta}_s^{\beta}}{\sigma^2}ds
     -\frac{1}{\sigma}\int_0^t e^{-as}dW_s^{\mathbb{Q}}\right)\right)dt\right].
\end{equation} 
\end{proof}
\bibliographystyle{siam}
\bibliography{references}

@article{fama1988dividend,
  title={Dividend yields and expected stock returns},
  author={Fama, Eugene F and French, Kenneth R},
  journal={Journal of Financial Economics},
  volume={22},
  number={1},
  pages={3--25},
  year={1988},
  publisher={Elsevier}
}

@article{LambertonTerenzi2,
  title={Properties of the American price function in the Heston-type models},
  author={Lamberton, Damien and Terenzi, Giulia},
  journal={arXiv preprint arXiv:1904.01653},
  year={2019}
}

@article{Jacka-Snell,
  title={Local times, optimal stopping and semimartingales},
  author={Jacka, SD},
  journal={The Annals of Probability},
  pages={329--339},
  year={1993},
  publisher={JSTOR}
}

@article{Bally,
  title={An elementary introduction to Malliavin calculus},
  author={Bally, Vlad},
  journal={Lecture Notes},
  year={2003},
  school={INRIA}
}

@article{poterba1988mean,
  title={Mean reversion in stock prices: Evidence and implications},
  author={Poterba, James M and Summers, Lawrence H},
  journal={Journal of Financial Economics},
  volume={22},
  number={1},
  pages={27--59},
  year={1988},
  publisher={Elsevier}
}

@article{jeon2025finite,
  title={The Finite-Horizon Retirement Problem with Borrowing Constraint: A Zero-Sum Stopper vs. Singular-Controller Game},
  author={Jeon, Junkee and Kim, Takwon and Yang, Zhou},
  journal={SSRN Electronic Journal},
  year={2025},
  note={Available at SSRN},
  doi={10.2139/ssrn.4364441}
}

@article{el1998optimization,
  title={Optimization of consumption with labor income},
  author={El Karoui, Nicole and Jeanblanc-Picqu{\'e}, Monique},
  journal={Finance and Stochastics},
  volume={2},
  number={4},
  pages={409--440},
  year={1998},
  publisher={Springer}
}

@article {de2020global,
    AUTHOR = {De Angelis, T. and Peskir, G.},
     TITLE = {Global {$C^1$} regularity of the value function in optimal
              stopping problems},
   JOURNAL = {The Annals of Applied Probability},
  FJOURNAL = {The Annals of Applied Probability},
    VOLUME = {30},
      YEAR = {2020},
    NUMBER = {3},
     PAGES = {1007--1031},
      ISSN = {1050-5164,2168-8737},
   MRCLASS = {60G40 (35K10 35R35 35R60 60H30 60J25)},
  MRNUMBER = {4133366},
       DOI = {10.1214/19-AAP1517},
       URL = {https://doi.org/10.1214/19-AAP1517},
}

@article{baldursson1996irreversible,
  title={Irreversible investment and industry equilibrium},
  author={Baldursson, Fridrik M and Karatzas, Ioannis},
  journal={Finance and Stochastics},
  volume={1},
  number={1},
  pages={69--89},
  year={1996},
  publisher={Springer}
}

@article{kim1996dynamic,
  title={Dynamic nonmyopic portfolio behavior},
  author={Kim, Tong Suk and Omberg, Edward},
  journal={The Review of Financial Studies},
  volume={9},
  number={1},
  pages={141--161},
  year={1996},
  publisher={Oxford University Press}
}

@book{peskir2006optimal,
  title={Optimal stopping and free-boundary problems},
  author={Peskir, Goran and Shiryaev, Albert},
  year={2006},
  publisher={Springer}
}

@article {peskir2025weak,
    AUTHOR = {Peskir, Goran},
     TITLE = {Weak solutions in the sense of {S}chwartz to {D}ynkin's
              characteristic operator equation},
   JOURNAL = {Potential Analysis},
  FJOURNAL = {Potential Analysis. An International Journal Devoted to the
              Interactions between Potential Theory, Probability Theory,
              Geometry and Functional Analysis},
    VOLUME = {63},
      YEAR = {2025},
    NUMBER = {4},
     PAGES = {1887--1905},
      ISSN = {0926-2601,1572-929X},
   MRCLASS = {60J25 (35K20 35K65 47D07 60J35 60J60)},
  MRNUMBER = {4990506},
       DOI = {10.1007/s11118-025-10225-0},
       URL = {https://doi.org/10.1007/s11118-025-10225-0},
}

@article{he1993labor,
  title={Labor income, borrowing constraints, and equilibrium asset prices},
  author={He, Hua and Pages, Henri F},
  journal={Economic Theory},
  volume={3},
  number={4},
  pages={663--696},
  year={1993},
  publisher={Springer}
}

@article{gutekunst2025optimal,
  title={Optimal Investment and Consumption in a Stochastic Factor Model},
  author={Gutekunst, Florian and Herdegen, Martin and Hobson, David},
  journal={arXiv preprint arXiv:2509.09452},
  year={2025}
}

@article{hata2018optimal,
  title={An optimal consumption problem for general factor models},
  author={Hata, Hiroaki and Nagai, Hideo and Sheu, Shuenn-Jyi},
  journal={SIAM Journal on Control and Optimization},
  volume={56},
  number={5},
  pages={3149--3183},
  year={2018},
  publisher={SIAM}
}

@article{hata2025optimal,
  title={Optimal Consumption and Investment Problem Using a Power Utility Function under a General Nonlinear Stochastic Factor Model},
  author={Hata, Hiroaki},
  journal={SIAM Journal on Control and Optimization},
  volume={63},
  number={5},
  pages={3588--3617},
  year={2025},
  publisher={SIAM}
}

@article{munk2004optimal,
  title={Optimal consumption and investment strategies with stochastic interest rates},
  author={Munk, Claus and S{\o}rensen, Carsten},
  journal={Journal of Banking \& Finance},
  volume={28},
  number={8},
  pages={1987--2013},
  year={2004},
  publisher={Elsevier}
}

@article{mehra1985equity,
  title={The equity premium: A puzzle},
  author={Mehra, Rajnish and Prescott, Edward C},
  journal={Journal of Monetary Economics},
  volume={15},
  number={2},
  pages={145--161},
  year={1985},
  publisher={Elsevier}
}

@article{campbell1999force,
  title={By force of habit: A consumption-based explanation of aggregate stock market behavior},
  author={Campbell, John Y and Cochrane, John H},
  journal={Journal of Political Economy},
  volume={107},
  number={2},
  pages={205--251},
  year={1999},
  publisher={The University of Chicago Press}
}

@book{karatzas2014brownian,
  title={Brownian motion and stochastic calculus},
  author={Karatzas, Ioannis and Shreve, Steven},
  year={2014},
  publisher={Springer}
}

@book {protter2012stochastic,
    AUTHOR = {Protter, Philip E.},
     TITLE = {Stochastic integration and differential equations},
    SERIES = {Stochastic Modelling and Applied Probability},
    VOLUME = {21},
   EDITION = {Second},
      NOTE = {Corrected third printing},
 PUBLISHER = {Springer-Verlag, Berlin},
      YEAR = {2005},
     PAGES = {xiv+419},
      ISBN = {3-540-00313-4},
   MRCLASS = {60-02 (60G44 60H05 60H10 60H20)},
  MRNUMBER = {2273672},
MRREVIEWER = {Evelyn\ Buckwar},
       DOI = {10.1007/978-3-662-10061-5},
       URL = {https://doi.org/10.1007/978-3-662-10061-5},
}

@article{callegaro2020optimal,
  title={Optimal reduction of public debt under partial observation of the economic growth},
  author={Callegaro, Giorgia and Ceci, Claudia and Ferrari, Giorgio},
  journal={Finance and Stochastics},
  volume={24},
  number={4},
  pages={1083--1132},
  year={2020},
  publisher={Springer}
}

@book{nualart2006malliavin,
  title={The Malliavin calculus and related topics},
  author={Nualart, David},
  year={2006},
  publisher={Springer}
}

@article{ernst2024quickest,
  title={Quickest real-time detection of multiple Brownian drifts},
  author={Ernst, Philip A and Mei, Hongwei and Peskir, Goran},
  journal={SIAM Journal on Control and Optimization},
  volume={62},
  number={3},
  pages={1832--1856},
  year={2024},
  publisher={SIAM}
}

@article{de2020optimal,
  title={Optimal dividends with partial information and stopping of a degenerate reflecting diffusion},
  author={De Angelis, Tiziano},
  journal={Finance and Stochastics},
  volume={24},
  number={1},
  pages={71--123},
  year={2020},
  publisher={Springer}
}

@article{Touzi-American,
  title={American options exercise boundary when the volatility changes randomly},
  author={Touzi, Nizar},
  journal={Applied Mathematics and Optimization},
  volume={39},
  number={3},
  pages={411--422},
  year={1999},
  publisher={Springer}
}

@article {LambertonTerenzi,
    AUTHOR = {Lamberton, Damien and Terenzi, Giulia},
     TITLE = {Variational formulation of American option prices in the Heston model},
   JOURNAL = {SIAM Journal on Financial Mathematics},
  FJOURNAL = {SIAM Journal on Financial Mathematics},
    VOLUME = {10},
      YEAR = {2019},
    NUMBER = {1},
     PAGES = {261--308},
      ISSN = {1945-497X},
   MRCLASS = {91G20 (60H30 60J60 91G80)},
  MRNUMBER = {3928342},
MRREVIEWER = {Wasim\ Ul-Haq},
       DOI = {10.1137/17M1158872},
       URL = {https://doi.org/10.1137/17M1158872},
}

@article{ferrari2018optimal,
  title={On the optimal management of public debt: A singular stochastic control problem},
  author={Ferrari, Giorgio},
  journal={SIAM Journal on Control and Optimization},
  volume={56},
  number={3},
  pages={2036--2073},
  year={2018},
  publisher={SIAM}
}

@article{de2017optimal,
  title={Optimal boundary surface for irreversible investment with stochastic costs},
  author={De Angelis, Tiziano and Federico, Salvatore and Ferrari, Giorgio},
  journal={Mathematics of Operations Research},
  volume={42},
  number={4},
  pages={1135--1161},
  year={2017},
  publisher={INFORMS}
}

@article {MR3904414,
    AUTHOR = {De Angelis, Tiziano and Stabile, Gabriele},
     TITLE = {On {L}ipschitz continuous optimal stopping boundaries},
   JOURNAL = {SIAM Journal on Control and Optimization},
  FJOURNAL = {SIAM Journal on Control and Optimization},
    VOLUME = {57},
      YEAR = {2019},
    NUMBER = {1},
     PAGES = {402--436},
      ISSN = {0363-0129,1095-7138},
   MRCLASS = {60G40 (35R35)},
  MRNUMBER = {3904414},
MRREVIEWER = {Zuoquan\ Xu},
       DOI = {10.1137/17M1113709},
       URL = {https://doi.org/10.1137/17M1113709},
}

@article {bandini,
    AUTHOR = {Bandini, Elena and De Angelis, Tiziano and Ferrari, Giorgio
              and Gozzi, Fausto},
     TITLE = {Optimal dividend payout under stochastic discounting},
   JOURNAL = {Mathematical Finance. An International Journal of Mathematics,
              Statistics and Financial Economics},
  FJOURNAL = {Mathematical Finance. An International Journal of Mathematics,
              Statistics and Financial Economics},
    VOLUME = {32},
      YEAR = {2022},
    NUMBER = {2},
     PAGES = {627--677},
      ISSN = {0960-1627,1467-9965},
   MRCLASS = {91G50 (35R35 60G40 93E20)},
  MRNUMBER = {4398652},
MRREVIEWER = {John\ P.\ Lehoczky},
       DOI = {10.1111/mafi.12339},
       URL = {https://doi.org/10.1111/mafi.12339},
}

@article {Merton,
    AUTHOR = {Merton, Robert C.},
     TITLE = {Optimum consumption and portfolio rules in a continuous-time
              model},
   JOURNAL = {Journal of Economic Theory},
  FJOURNAL = {Journal of Economic Theory},
    VOLUME = {3},
      YEAR = {1971},
    NUMBER = {4},
     PAGES = {373--413},
      ISSN = {0022-0531,1095-7235},
   MRCLASS = {90A15 (90A10)},
  MRNUMBER = {456373},
       DOI = {10.1016/0022-0531(71)90038-X},
       URL = {https://doi.org/10.1016/0022-0531(71)90038-X},
}

@article{merton1969lifetime,
  title={Lifetime portfolio selection under uncertainty: The continuous-time case},
  author={Merton, Robert C},
  journal={The Review of Economics and Statistics},
  pages={247--257},
  year={1969},
  publisher={JSTOR}
}

@article{wachter2002portfolio,
  title={Portfolio and consumption decisions under mean-reverting returns: An exact solution for complete markets},
  author={Wachter, Jessica A},
  journal={Journal of Financial and Quantitative Analysis},
  volume={37},
  number={1},
  pages={63--91},
  year={2002},
  publisher={Cambridge University Press}
}

@article {quickest,
    AUTHOR = {Johnson, Peter and Peskir, Goran},
     TITLE = {Quickest detection problems for {B}essel processes},
   JOURNAL = {The Annals of Applied Probability},
  FJOURNAL = {The Annals of Applied Probability},
    VOLUME = {27},
      YEAR = {2017},
    NUMBER = {2},
     PAGES = {1003--1056},
      ISSN = {1050-5164,2168-8737},
   MRCLASS = {60G40 (35K67 45G10 60H30 60J60 62C10)},
  MRNUMBER = {3655860},
MRREVIEWER = {Robert\ C.\ Dalang},
       DOI = {10.1214/16-AAP1223},
       URL = {https://doi.org/10.1214/16-AAP1223},
}

@article {christensen,
    AUTHOR = {Christensen, S\"oren and Crocce, Fabi\'an and Mordecki,
              Ernesto and Salminen, Paavo},
     TITLE = {On optimal stopping of multidimensional diffusions},
   JOURNAL = {Stochastic Processes and their Applications},
  FJOURNAL = {Stochastic Processes and their Applications},
    VOLUME = {129},
      YEAR = {2019},
    NUMBER = {7},
     PAGES = {2561--2581},
      ISSN = {0304-4149,1879-209X},
   MRCLASS = {60G40 (60J60 62L15)},
  MRNUMBER = {3958442},
       DOI = {10.1016/j.spa.2018.07.014},
       URL = {https://doi.org/10.1016/j.spa.2018.07.014},
}

@article {frey,
    AUTHOR = {Frey, R\"udiger},
     TITLE = {Superreplication in stochastic volatility models and optimal
              stopping},
   JOURNAL = {Finance and Stochastics},
  FJOURNAL = {Finance and Stochastics},
    VOLUME = {4},
      YEAR = {2000},
    NUMBER = {2},
     PAGES = {161--187},
      ISSN = {0949-2984,1432-1122},
   MRCLASS = {91B28 (60G40 60H99)},
  MRNUMBER = {1780325},
MRREVIEWER = {Jak\v sa\ Cvitani\'c},
       DOI = {10.1007/s007800050010},
       URL = {https://doi.org/10.1007/s007800050010},
}

@article {jacka,
    AUTHOR = {Assing, Sigurd and Jacka, Saul and Ocejo, Adriana},
     TITLE = {Monotonicity of the value function for a two-dimensional
              optimal stopping problem},
   JOURNAL = {The Annals of Applied Probability},
  FJOURNAL = {The Annals of Applied Probability},
    VOLUME = {24},
      YEAR = {2014},
    NUMBER = {4},
     PAGES = {1554--1584},
      ISSN = {1050-5164,2168-8737},
   MRCLASS = {60G40 (91G20 93E20)},
  MRNUMBER = {3211004},
MRREVIEWER = {\L.\ Stettner},
       DOI = {10.1214/13-AAP956},
       URL = {https://doi.org/10.1214/13-AAP956},
}

@article{guasoni2025variational,
  title={A Variational Approach to Portfolio Choice},
  author={Guasoni, Paolo and Lawless, Emmet and Tai, Ho Man},
  journal={Available at SSRN 5669613},
  year={2025}
}

@article{barberis2000investing,
  title={Investing for the long run when returns are predictable},
  author={Barberis, Nicholas},
  journal={The Journal of Finance},
  volume={55},
  number={1},
  pages={225--264},
  year={2000},
  publisher={Wiley Online Library}
}

@book {oksendal,
    AUTHOR = {\O ksendal, Bernt},
     TITLE = {Stochastic differential equations},
    SERIES = {Universitext},
   EDITION = {Sixth},
      NOTE = {An introduction with applications},
 PUBLISHER = {Springer-Verlag, Berlin},
      YEAR = {2003},
     PAGES = {xxiv+360},
      ISBN = {3-540-04758-1},
   MRCLASS = {60H10 (60G44 60J60)},
  MRNUMBER = {2001996},
       DOI = {10.1007/978-3-642-14394-6},
       URL = {https://doi.org/10.1007/978-3-642-14394-6},
}

\end{document}